\newtheorem{theorem}{Theorem}[section]
\newtheorem{lemma}[theorem]{Lemma}
\newtheorem{proposition}[theorem]{Proposition}
\newtheorem{definition}[theorem]{Definition}
\newtheorem{example}[theorem]{Example}
\newtheorem{corollary}[theorem]{Corollary}
\newtheorem{remark}[theorem]{Remark}
\newcommand{\cP}{{\cal P}}
\newcommand{\cM}{{\cal M}}
\def\p{\partial}
\def\cR{{\mathcal R}}
\def\cA{{\mathcal A}}
\def\cB{{\mathcal B}}
\def\cP{{\mathcal P}}
\def\di{\diamond}
\def\R{{ \mathbb{R}}}
\def\N{{\mathbb{N}}}
\def\I{{\mathbb{I}}}
\newcommand{\RE} {{\rm I \kern-2.8pt R} }
\newcommand{\beasnum}{\begin{eqnarray}}
\newcommand{\eeasnum}{\end{eqnarray}}
\newcommand{\beas}{\begin{eqnarray*}}
\newcommand{\eeas}{\end{eqnarray*}}
\begin{document}

\title{\bf {Selection-Mutation Differential Equations:\\
Long-Time Behavior of Measure-Valued Solutions}}

\author{Azmy S. Ackleh$^{\ddag}$, John Cleveland$^{\dag}$  and
Horst R. Thieme$^*$
\\[3mm]
$^\ddag$Department of Mathematics\\[-2mm]
University of Louisiana at Lafayette\\[-2mm]
Lafayette, Louisiana 70504-1010
\\[3mm]
$^\dag$Department of Mathematics\\[-2mm]
University of Wisconsin \\[-2mm]
Richland Center, WI  53581
\\[3mm]
$^*$School of Mathematical and Statistical Sciences
\\[-2mm]
Arizona State University
\\[-2mm]
Tempe, Arizona 85787-1804
}

\maketitle

\begin{abstract}
  We study the long-time behavior of solutions to a measure-valued selection-mutation model
  that we formulated in \cite{CLEVACK}.
  We establish permanence results for the full model,
   and we study the limiting behavior even when there is more than one strategy
   of a given fitness; a case that arises in applications.
    We show that for the pure selection case the solution of the dynamical system converges
 to a Dirac measure centered at the fittest strategy class provided that the support of the initial measure contains a fittest strategy;
 thus we term this Dirac measure an Asymptotically  Stable Strategy (ASS).
 We also show that when the strategy space is discrete,
 the selection-mutation model with small mutation has a locally asymptotically stable equilibrium that attracts all initial conditions that are positive at the fittest strategy.
 \\[3mm]
\noindent {\bf Key Words:} Evolutionary game  theory, selection-mutation models, cone of nonnegative measures, long time behavior, {persistence theory}, permanence, survival of the fittest,
asymptotically stable strategy, {Lyapunov functions}.\\
\noindent {\bf AMS Subject Classification:} 91A22, 34G20, 37C25,
92D25.
\end{abstract}


\section{ Introduction}
\label{sec:intro}

A significant part of evolutionary game theory (EGT) focuses on the creation and study of  mathematical models that
describe how the strategy profiles
in games change over time due to mutation and selection
(replication) \cite{Nowak,BrownVincent}.
In \cite{CLEVACK} we defined an evolutionary game theory (EGT) model as
an ordered triple $(Q,\mu(t),F(\mu(t)))$ subject to:
\begin{equation}
\label{mconstraint}
\frac{d}{dt}\mu(t)(E)=F(\mu(t))(E), \text{ for every}
~~E \in \mathcal{B}(Q).
\end{equation}
Here $Q$ is the strategy
(metric) space, $\mathcal{B}(Q)$ is the $\sigma$-algebra of  Borel subsets of $Q$, $\mu(t)$
is a time dependent family of  nonnegative finite Borel measures on $Q$,
and $F$ is a density dependent vector field such that $\mu$ and $F$
satisfy equation \eqref{mconstraint}. For a Borel subset $E$ of $Q$, $\mu(t) (E)$
denotes the measure $\mu(t)$ applied to $E$ {and can be interpreted as
the number of individuals at time $t$ that carry a strategy from the
set $E$.}

 We also formulated the following selection-mutation EGT model as a dynamical system
 $\phi(t,u, \gamma)$
 on the  state space of finite nonnegative Borel measures under the  weak$^*$ topology
 with $\mu(t) = \phi(t,u,\gamma)$ and $\bar \mu(t) = \mu(t)(Q)$:
\begin {equation}
\left\{\begin{array}{rl}
\label{M1}
 \displaystyle \frac{d}{dt}{\mu}(t)(E)
 = &  \displaystyle
  \int_Q {B}\big(\bar \mu(t),  q\big) \gamma( q)(E)\mu(t)(d  q)
\; -\; \int_E {D}\big(\bar \mu(t), q\big)
\mu(t)(d q) ,
\\ = & {F} (\mu(t), \gamma)(E),
\\
\mu(0)=& u.
\end{array}\right.
\end{equation}
Here $B(\bar \mu(t),q)$ and $D(\bar \mu(t),q)$ represent the
reproduction {(replication)} and mortality rates of individuals carrying strategy
$q$ when the total population size is $\bar \mu(t)= \mu(t)(Q)$.
 The probability kernel $\gamma(q)(E)$ represents the probability that an
individual carrying strategy $q$ produces {offspring} carrying strategies in the
Borel set $E$. We call $\gamma$ a {\em mutation kernel}.

The purpose of this paper is to complement
the well-posedness theory established in \cite{CLEVACK} with a study of the
long-time behavior of solutions to the model \eqref{M1}.
It is well known that the solutions of many such models constructed on the state space
of continuous or integrable functions converge to a Dirac measure
{concentrated} at the fittest
{strategy or trait} \cite{AFT,AMFH,calsina,CALCAD,GVA,P,GR1,GR2,ThiYang}. In \cite[ch.2]{P},
  these measure-valued limits are illustrated in a biological and
  adaptive dynamics environment. This convergence is in the weak$^* $ topology \cite{AMFH}.
   Thus, the asymptotic limit of the solution is not in such state spaces; it is a measure.
   Some models (e.g. \cite{AFT}, {\cite{ThiYang}}) have addressed this problem.
In \cite{AFT}, the authors formulated a {\it pure} selection model
{ on the space of finite signed measures with
   density dependent birth and mortality
functions and a 2-dimensional {strategy} space}.
They discussed existence-uniqueness of solutions and studied the long term behavior of
the model.
Here, we substantially generalize the results in that paper in several directions:
\begin{itemize}
\item [1)] We model selection and mutation,
and we allow for the selection-mutation kernel to be a family of measures
(thus simultaneously treating discrete and continuous
strategy spaces).
\item [2)] We consider more general nonlinearities in the rates, and thus
the results apply to a wider class of models.
    \item[3)] We
allow for more than one fittest strategy.
\end{itemize}

 To motivate our attempt of allowing more than one fittest strategy, recall
 that in  \cite{AMFH} the authors considered the following logistic growth with
 pure selection (i.e., strategies
replicate themselves exactly and no mutation occurs)  model:
\begin{equation}
 \frac{d}{dt} x(t,q) = x(t,q) (
q_1 -q_2 \bar x(t)), \label{logiseq}
\end{equation}
{where $\bar x(t) = \int_Q x(t,q) dq$} is the total population, $Q \subset \text{int}(\mathbb{R}_+^2)$ is a rectangle
 and the state space is the set of continuous real valued functions
 $C(Q)$. Each $ q=(q_1, q_2) \in Q$ is a pair where $q_1$ is an
 intrinsic replication rate and $q_2$ is an intrinsic mortality
 rate. The fittest strategy was defined as the one with the highest replication to mortality ratio, $\max_Q\{q_1/q_2\}$, which is unique for this choice of $Q$. Utilizing the uniqueness of the fittest strategy, the authors show that the solution converges
 to the Dirac mass centered at {the strategy} with this ratio.
 In Figure 1, we present two examples of strategy spaces $Q\subset \text{int}(\mathbb{R}_+^2)$:
{one that
is similar to that considered in \cite{AMFH} and has a unique fittest strategy  (left)
and another that has a continuum of fittest strategies (right).}

  \begin{figure}[htbp] \label{Fig1}
  \vspace*{-0.1in}
  \begin{center}
 \includegraphics[width=10cm, height=10cm]
{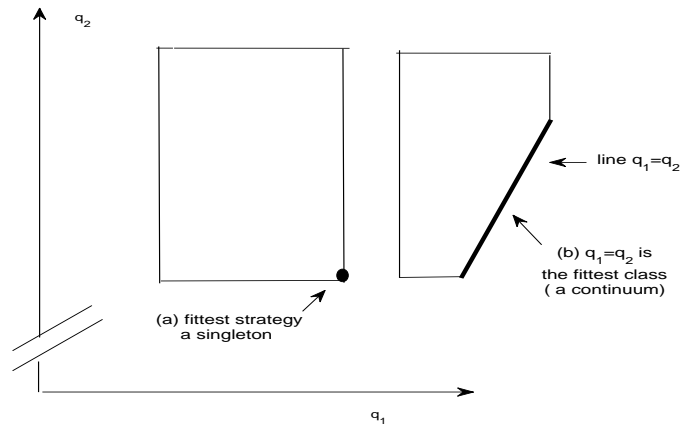}
 \end{center}
 \vspace*{-.9in}
 \caption{ Two examples of strategy spaces.}
\label{strategyspace}
\end{figure}

This paper is organized as follows.
In section \ref{sec:ass}, we provide some background material and make assumptions on the model parameters.  In section \ref{sec:asymp-gen}, we  provide some permanence and persistence results.
 In section \ref{sec:pure-sel}, we study the asymptotic behavior for the pure selection kernel, and in section \ref{sec:dir-mut}, we study the asymptotic behavior for directed mutation kernels. In section \ref{sec:disc}, we consider  discrete strategy spaces and establish asymptotic behavior  results for this case. In section \ref{sec:conclud},  we provide concluding remarks.


\section{Assumptions and background material}
\label{sec:ass}

In this section, we state assumptions and define notation that we will use throughout
 the paper, and we recall the main well-posedness result established in \cite{CLEVACK}.
 Here  $ \mathcal{M}= \mathcal{M}(Q)$  are the finite signed Borel measures on
 $Q$, a compact metric space.  $\mathcal{M}_{V,+}$ represents the positive cone under the total variation topology,
 and $\mathcal{M}_{w,+}$ represents the positive cone under the weak* topology.
 Let $\mathcal P_w= \mathcal P_w(Q) $ denote the probability
measures under the weak* topology and $
C(Q,\mathcal{P}_w)$ denote the continuous $\mathcal{P}_w$ valued functions on
$Q$ with the topology of uniform convergence.
The mutation kernel $\gamma$ is assumed to be an element of $C(Q,\mathcal{P}_w)$,
in other words, $\gamma $ has the Feller property {(see \cite{LaTh} and the references therein)}. Also, for any time dependent mapping, $f(t)$,  we let $f^\prime (t)=\frac{d}{dt} f(t)$.

\begin{lemma}
\label{re:measures-weak-top}
The following hold:
\begin{itemize}
\item $\cM_{w+}$ is a closed convex subset (actually cone) of the locally
convex topological vector space $\cM_w$ in which all bounded closed subsets
are compact.
\item There exists a norm $p$ on $\cM$ such that, for each bounded closed subset of
$\cM_w$, the induced metric space is complete and topologically equivalent to
the topological space induced by the weak$^*$ topology. The norm $p$ makes $\cM_+$ into
a complete metric space  which is topologically  equivalent to  $\cM_{w+}$.
\item $C(Q, \cP_w)$ is a complete convex subset of the normed vector space $C(Q, \cM_p)$ where $\cM_p$
is $\cM$ equipped with the norm $p$.
\end{itemize}
\end{lemma}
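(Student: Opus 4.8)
The plan is to realize $\cM=\cM(Q)$ as the dual $C(Q)^*$ via the Riesz representation theorem, under which the weak$^*$ topology is exactly the dual weak$^*$ topology and the total variation norm is the dual norm. With this identification the first bullet is largely standard. Convexity and the cone property of $\cM_{w+}$ are immediate, and closedness follows by testing against nonnegative $f\in C(Q)$: if $\mu_\alpha\to\mu$ weak$^*$ with $\mu_\alpha\ge 0$, then $\int_Q f\,d\mu=\lim_\alpha\int_Q f\,d\mu_\alpha\ge 0$, so $\mu\ge0$. For compactness I would note that $Q$ compact metric forces $C(Q)$ to be separable; hence closed norm balls of $\cM$ are weak$^*$ compact by Banach--Alaoglu and weak$^*$ metrizable. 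Since weak$^*$-bounded sets are norm-bounded (uniform boundedness), any bounded weak$^*$-closed set sits inside such a ball as a closed subset of a compact set, hence is compact.

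For the second bullet I would build $p$ explicitly. Choosing a sequence $\{f_n\}_{n\ge1}$ dense in the closed unit ball of $C(Q)$ with $f_1\equiv\mathbf 1$, set $p(\mu)=\sum_{n\ge1}2^{-n}\big|\int_Q f_n\,d\mu\big|$. Density makes $p$ positive-definite, so it is a norm, and on any total-variation-bounded set the defining series converges uniformly, so $p$ is weak$^*$-continuous there. On a bounded weak$^*$-closed (hence weak$^*$-compact) set $S$, the identity $(S,\mathrm{weak}^*)\to(S,p)$ is therefore a continuous bijection from a compact space onto a Hausdorff one, hence a homeomorphism; compactness of $(S,p)$ then gives completeness. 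This settles the bounded-set claim.

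The genuinely nonroutine point, and the step I expect to be the main obstacle, is completeness of the \emph{unbounded} cone $\cM_+$ under $p$, since $(\cM,p)$ is itself not complete. Here the choice $f_1\equiv\mathbf 1$ pays off: for $\mu\ge0$ one has $p(\mu)\ge \tfrac12|\mu(Q)|=\tfrac12\|\mu\|$, so $p$ controls the total variation norm on the cone. Consequently a $p$-Cauchy sequence in $\cM_+$ is $p$-bounded, hence norm-bounded, hence contained in a weak$^*$-compact set on which $p$ metrizes the weak$^*$ topology; there it converges, and the limit lies in the weak$^*$-closed set $\cM_+$. The same mass-control estimate shows that $p$-convergence and weak$^*$-convergence agree on $\cM_+$, giving the asserted topological equivalence with $\cM_{w+}$.

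Finally, for the third bullet convexity is clear, because a convex combination of $\cP_w$-valued continuous maps is continuous and $\cP_w$ is convex. For completeness I would take a sequence $\{\gamma_k\}$ that is Cauchy in the sup-$p$ norm of $C(Q,\cM_p)$. For each fixed $q$, $\{\gamma_k(q)\}$ is $p$-Cauchy inside $\cP_w$, a bounded weak$^*$-closed subset that is $p$-complete by the second bullet, so it converges to some $\gamma(q)\in\cP_w$. Uniformity of the Cauchy estimate in $q$ upgrades this to uniform convergence $\gamma_k\to\gamma$, and a uniform limit of continuous maps into the metric space $(\cP_w,p)$ is continuous; hence $\gamma\in C(Q,\cP_w)$ and this set is complete.
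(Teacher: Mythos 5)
Your proposal is correct and follows essentially the same route as the paper: the first bullet via duality with $C(Q)$ and Alaoglu--Bourbaki, the norm $p$ built from a sequence dense in the unit ball of the separable space $C(Q)$ (your device of taking $f_1\equiv\mathbf{1}$ is just the paper's extra summand $|\nu(Q)|$ in disguise, and serves the same purpose of controlling total variation on the cone), and completeness of $\cM_+$ and of $C(Q,\cP_w)$ by reducing $p$-Cauchy sequences to bounded, weak$^*$-compact sets. You merely spell out in more detail the "standard considerations" the paper leaves implicit for the third bullet.
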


\begin{proof}
Since $\mathcal{M}_{V}$ is the norm dual of $C(Q)$,
the first statement is a consequence of the Alaoglu-Bourbaki theorem.

Since $Q$ is a compact metric space, its topology has
a countable base. So $C(Q)$ with the supremum norm is separable
(Theorem 7.6.3 in \cite{Bau}),
and so is its unit ball.
Choose a dense subset $S= \{f_k; k \in \N\}$ of the unit ball of $C(Q)$.
 For each $k\in \N$, define the
seminorm  $p_k$ on $\cM(Q)$ by
\begin{equation}
\label{eq:seminorm}
p_k ( \nu) = \Big | \int_Q f_k(q) \nu(dq)  \Big |.
\end{equation}
The following standard construction defines a norm  $p$ on $\cM(Q)$,
\begin{equation}
\label{eq:norm-weak}
p(\nu) = |\nu(Q)| + \sum_{k=1}^\infty 2^{-k} p_k (\nu).
\end{equation}
{This norm induces the weak$^*$ topology on every bounded subset of
$\cM_w$ {\cite[Thm.6.30]{ALI}} and on $\cM_{w+}$.
Further, any sequence that is a Cauchy sequence with respect to $p$ and
bounded in $\cM_w$ is a weak$^*$ Cauchy sequence and so converges
in the weak$^*$ topology and thus also with respect to $p$.
Any sequence in $\cM_+$ that is a Cauchy sequence with respect to $p$
is automatically bounded in total variation and converges with
respect to $p$ by the same arguments.}

The topology on $C(Q,\mathcal{P}_w)$ is induced by the norm
\begin{equation}
\label{eq:metric-mutation}
\|\gamma\| = \sup_{q \in Q} p ( \gamma(q)), \qquad { \gamma \in C(Q, \cM_p).}
\end{equation}
Standard considerations show that $C(Q,\mathcal{P}_w)$ is a closed
convex subset of the {normed vector space} $C(Q, \cM_p)$.
 \end{proof}

\begin{lemma}
\label{re:semicont2}
If $f: Q \to \R$  is bounded below (above)
and lower (upper) semicontinuous,
then the mapping
\[
\begin{split}
\nu & \mapsto \int f d\nu, \\
 \cal{M_+} & \to [-\infty, \infty],
\end{split}
\]
is lower (upper) semicontinuous.
\end{lemma}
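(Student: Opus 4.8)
The plan is to handle the lower semicontinuous case; the upper semicontinuous case then follows by replacing $f$ with $-f$, since $\nu \mapsto \int(-f)\,d\nu = -\int f\,d\nu$ and negation interchanges lower and upper semicontinuity. Because $f$ is bounded below, I may add a constant $c$ so that $f + c \geq 0$; this merely shifts the functional by $\nu \mapsto c\,\nu(Q) = c\int_Q 1\,d\nu$, which is weak$^*$-continuous (as $1 \in C(Q)$) and hence irrelevant for semicontinuity. So I assume $f \geq 0$ throughout. The core idea is to approximate $f$ from below by continuous functions and then pass to a supremum.

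The substantive step, and the one I expect to be the main obstacle, is the monotone continuous approximation. Since $Q$ is a compact metric space with metric $d$, define the inf-convolutions
\[
f_n(q) = \inf_{p \in Q}\big( f(p) + n\, d(p,q)\big), \qquad n \in \N.
\]
Each $f_n$ is Lipschitz (with constant $n$), hence continuous; the sequence $(f_n)$ is nondecreasing and bounded above by $f$; and, precisely because $f$ is lower semicontinuous and $f \geq 0$, one has $f_n(q) \uparrow f(q)$ for every $q$. I would verify this last convergence as follows: given $q$ and any $\alpha < f(q)$, lower semicontinuity provides $\delta > 0$ with $f > \alpha$ on the ball $\{d(\cdot,q) < \delta\}$, while for $d(p,q) \geq \delta$ the bound $f \geq 0$ gives $f(p) + n\,d(p,q) \geq n\delta \geq \alpha$ once $n$ is large; hence $f_n(q) \geq \alpha$ for large $n$, and since $\alpha < f(q)$ was arbitrary, $\liminf_n f_n(q) \geq f(q)$, whereas taking $p = q$ gives $f_n \leq f$. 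Compactness of $Q$ is used only to keep these quantities finite and the construction well behaved.

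With the approximation in hand the conclusion is immediate. For each fixed $n$ the map $\nu \mapsto \int_Q f_n\,d\nu$ is continuous on $\cM_{w+}$ directly from the definition of the weak$^*$ topology, since $f_n \in C(Q)$. Because each $\nu \in \cM_+$ is nonnegative and $f_n \uparrow f$, the monotone convergence theorem yields
\[
\int_Q f\,d\nu = \sup_{n \in \N} \int_Q f_n\,d\nu \in [0,\infty].
\]
Thus $\nu \mapsto \int_Q f\,d\nu$ is a pointwise supremum of a family of continuous (a fortiori lower semicontinuous) functions on $\cM_{w+}$, and is therefore itself lower semicontinuous. Equivalently, using that $\cM_{w+}$ is metrizable by Lemma \ref{re:measures-weak-top}, one may argue sequentially: if $\nu_k \to \nu$ in the weak$^*$ topology, then $\liminf_k \int f\,d\nu_k \geq \liminf_k \int f_n\,d\nu_k = \int f_n\,d\nu$ for every $n$, and letting $n \to \infty$ gives $\liminf_k \int f\,d\nu_k \geq \int f\,d\nu$. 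This completes the plan; the only genuinely nonroutine ingredient is the monotone continuous approximation of $f$ constructed above.
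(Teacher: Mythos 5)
Your proof is correct and follows essentially the same route as the paper's: approximate $f$ from below by an increasing sequence of Lipschitz functions, apply the Monotone Convergence Theorem, and exchange the limits via the $\liminf$ (or supremum-of-continuous-functions) argument. The only difference is that you construct the approximating sequence explicitly by inf-convolution, whereas the paper simply cites \cite[Thm.3.13]{ALI} for its existence.
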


\begin{proof} We will demonstrate the result for lower semicontinuous functions
that are bounded below;
the other case follows from this one by considering $-f$.
Let $\nu_m \rightarrow \nu$ in the weak* topology. Then there exists a sequence $ ( f_n)$ with each
$f_n $ being Lipschitz continuous such that $f_n(x) \uparrow f(x)$ for  $x \in X $
\cite[Thm.3.13]{ALI}. From
$$
\int f_n d \nu_m \le \int f d\nu_m \mbox { and } \int f_n d\nu_m
\stackrel{m \to \infty}{\longrightarrow} \int f_n d\nu,
$$
we see that $ \int f_n d\nu \le \liminf_{m} \int f d\nu_m $ for each $n.$
We use the Monotone Convergence Theorem and obtain
$$ \int f d\nu = \lim_{n \rightarrow \infty} \int f_n d\nu \le \liminf_{m} \int f d\nu_m . $$
By Lemma \ref{re:measures-weak-top},
this sequential characterization of lower semicontinuity
is equivalent to the topological one (inverse images of intervals $(b, \infty]$, $b \in \R$,
are open sets),
 and the function $ \nu \mapsto \int f d\nu $ is lower semicontinuous.
\end{proof}

\begin{lemma}
\label{re:semicont}
 Let $E$ be an open (closed) subset of the compact metric space $Q$.

\begin{itemize}
\item[(a)] Then the function $\rho: \cM_{w,+} \to \R_+$,
$\rho (\nu) = \nu(E)$, is lower (upper) semicontinuous.

\item[(b)] Also the function $\psi:C(Q,\mathcal{P}_w)\times Q \to \R_+$
  defined by $\psi(\gamma, q) = \gamma(q)(E)$ is lower (upper) semicontinuous.
\end{itemize}
\end{lemma}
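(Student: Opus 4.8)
The plan is to derive both parts from Lemma \ref{re:semicont2} by rewriting the set-evaluation as the integral of an indicator function, and then, for part (b), by composing with a continuous evaluation map.

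For part (a), I would first observe that when $E$ is open its indicator $\mathbf{1}_E$ is bounded below (by $0$) and lower semicontinuous: every superlevel set $\{q : \mathbf{1}_E(q) > b\}$ equals $\emptyset$, $E$, or $Q$, all of which are open. Since $\rho(\nu) = \nu(E) = \int_Q \mathbf{1}_E \, d\nu$, Lemma \ref{re:semicont2} gives at once that $\rho$ is lower semicontinuous on $\cM_+$, hence on $\cM_{w,+}$ by the topological equivalence recorded in Lemma \ref{re:measures-weak-top}. For $E$ closed the indicator $\mathbf{1}_E$ is instead bounded above (by $1$) and upper semicontinuous (its sublevel sets are $\emptyset$, $Q\setminus E$, or $Q$, again open), so the upper-semicontinuous half of Lemma \ref{re:semicont2} applies verbatim.

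For part (b), since $C(Q,\mathcal{P}_w)$ carries the norm \eqref{eq:metric-mutation} and $Q$ is a metric space, the product is metrizable and I may argue sequentially. The key step is to show that the evaluation map $\mathrm{ev}\colon (\gamma,q)\mapsto \gamma(q)$ is continuous from $C(Q,\mathcal{P}_w)\times Q$ into $\cM_{w,+}$. Given $(\gamma_n,q_n)\to(\gamma,q)$, I split
\[
p\big(\gamma_n(q_n)-\gamma(q)\big)\le p\big(\gamma_n(q_n)-\gamma(q_n)\big)+p\big(\gamma(q_n)-\gamma(q)\big),
\]
bounding the first term by $\|\gamma_n-\gamma\|\to 0$ (uniform convergence of the kernels) and controlling the second by the continuity of the fixed limit $\gamma$ together with $q_n\to q$. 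Because all measures involved are probability measures, hence bounded, $p$-convergence coincides with weak$^*$ convergence by Lemma \ref{re:measures-weak-top}, so $\gamma_n(q_n)\to\gamma(q)$ weak$^*$. Then $\psi=\rho\circ\mathrm{ev}$ is the composition of a continuous map with the lower semicontinuous $\rho$ of part (a), and is therefore lower semicontinuous; the closed/upper case is identical.

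The main obstacle I anticipate is precisely the joint continuity of $\mathrm{ev}$: one must combine the uniform convergence of the kernels $\gamma_n$ with the convergence $q_n\to q$ of the base points, and the decomposition above succeeds only because the limit kernel $\gamma$ is itself continuous, so that $\gamma(q_n)\to\gamma(q)$. Once this is in hand, everything else reduces to the indicator-function observation and a direct appeal to Lemma \ref{re:semicont2}.
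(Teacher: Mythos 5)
Your proposal is correct and follows essentially the same route as the paper: part (a) via the indicator function of an open (closed) set being lower (upper) semicontinuous and an appeal to Lemma \ref{re:semicont2}, and part (b) by composing the continuous evaluation map $(\gamma,q)\mapsto\gamma(q)$ with the semicontinuous functional from part (a). The only difference is that you spell out the joint continuity of the evaluation map, which the paper merely asserts.
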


\begin{proof}  We first notice that the characteristic function of an open
set is lower semicontinuous and the characteristic function of a closed set is upper semicontinuous.
If a set is both open and closed, then its characteristic function is continuous.
Hence (a) is immediate from Lemma {\ref{re:semicont2}}.
Likewise (b) follows once we notice
$$ C( Q, \cP_w) \times Q \rightarrow \cal{M}_{w+} \rightarrow \mathbb{R_+} $$
given by $$ ( \gamma, q) \mapsto \gamma(q) \mapsto \int f d \gamma(q) $$ is a
composition of a continuous and a lower (upper) semicontinuous function.
\end{proof}


\subsection{Birth and Mortality Rates}


Concerning the birth rate, $B(s,q)$, and the mortality rate, $
D(s,q)$,
{where $s \in [0,\infty)$ is the total population size and $q \in Q$
a strategy (trait)}, we make assumptions similar
to those used in \cite{AFT,CLEVACK}:
\begin{itemize}
\item[{\bf (A1)}] {$B: [0,\infty) \times Q \rightarrow [0,\infty)$ is continuous,
and $B(s,q)$    is locally Lipschitz
continuous in $s\ge 0$, uniformly with respect to $q \in Q$, and   nonincreasing in $s \ge 0$.}

\item[{\bf (A2)}] {$D: [0,\infty) \times Q \rightarrow [0,\infty)$ is
continuous, and $D(s,q)$
is
locally Lipschitz
continuous in $s \ge 0$, uniformly with respect to $q \in Q$, and nondecreasing in $s \ge 0$ } and $ \inf_{q \in Q} {D(0, q)} =\varpi
>0 $. (This means that there is some inherent, {density
unrelated, mortality.})
\end{itemize}
The {\em reproduction number} of strategy $q\in Q$ at population size $s$ is defined
by
\begin{equation}
\label{eq:rep-num}
\cR( s, q) = \frac{B(s,q)}{D(s,q)}.
\end{equation}
The {\em basic reproduction number} of strategy $q$ is defined by
\begin{equation}
\label{eq:rep-num-basic}
\cR_0(q) = \cR(0,q), \qquad q \in Q.
\end{equation}
The following additional assumption is made.
\begin{itemize}
\item[{\bf (A3)}] For each $q \in Q$ with $\cR_0(q) \ge 1$, there exists a unique
$K(q) \ge 0$ such that $\cR(K(q), q)$ $ =1$.

\item[] {If $\cR_0(q) < 1$, we define $K(q) =0$.}

\end{itemize}

{The number $K(q)$ in (A3) is the {\em carrying capacity} of
the environment if everyone in the population were
subject to strategy $q$.}
Since (A1)-(A3) imply that the function $K(\cdot)$ is continuous, it has a maximum and
a minimum on the compact set $Q$. We define
\begin{equation}
\label{Max}
K^\diamond = \max_{ q \in Q} K(q)
\end{equation}
and  \begin{equation} \label{Min} k_{\diamond}= \min_{ q \in Q} K(q).
\end{equation}

Let $Q^\di$ be the subset of $Q$ where the maximal carrying capacity
is taken,
\begin{equation}
\label{eq:Q-max}
Q^\di = \{q \in Q; K(q) = K^\di\}.
\end{equation}
Then $Q^\di$ is a nonempty compact subset of $Q$ and $K(q) = K^\di$
for all $q \in Q^\di$. Further, if $K^\di >0$,
\begin{equation}
\label{eq:carry-cap-more}
\left . \begin{array}{rl}\cR(K^\di, q) =1  &
 \\ \cR(x,q) >1, &  0 \le  x< K^\di
 \\
 \cR(x, q) < 1, & x > K^\di
 \end{array} \right \} \quad q \in Q^\di.
\end{equation}

\subsection{Main Theorem from \cite{CLEVACK}}
The following is the main well-posedness theorem taken from \cite{CLEVACK}.
\begin{theorem}
\label{main}
Assume that (A1)-(A2) hold. There exists a continuous dynamical system $({\cal M}_{w,+
}, C(Q, {\cal P}_w),\phi)$ where $\phi:
{\mathbb {R}}_+ \times{\cal M} _{w,+} \times C(Q, {\cal P}_w) \to {\cal M}_{w,+}
  $ satisfies the following:
\begin{enumerate}

\item The mapping $(t,u,\gamma) \mapsto \phi(t,u, \gamma)$ is  continuous.

\item  For fixed $ u, \gamma $, the mapping $t \mapsto \phi(t,
u,\gamma)$ is continuously differentiable in total variation, i.e.,
$\phi(\cdot, u, \gamma): {\mathbb {R}}_+ \to {\cal M}_{V,+}$ is continuously differentiable.
 \item For fixed $ u, \gamma $, the mapping $t \mapsto \phi(t, u,\gamma)$ is the unique \emph{solution} $\mu$ to
 \begin {equation}
 \left\{\begin{array}{rl}
 \label{M}
 \displaystyle {\mu}'(t)(E) = & \int_Q {B}(\bar \mu(t),  q) \,\gamma( q)(E)\,\mu(t)(d q)
\; - \; \displaystyle \int_E {D}(\bar \mu(t), q) \,\mu(t)(d q)
\\ = &  {F} (\mu(t), \gamma)(E),
 \\
\mu(0)= &u,
\end{array}\right.
\end{equation}
where $\bar \mu (t) = \mu(t) (Q)$.
\end{enumerate}
\end{theorem}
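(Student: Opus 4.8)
The plan is to construct the semiflow by a Banach fixed-point argument in the total-variation norm, to make it global through an a priori bound on the total mass, to read off positivity and $C^1$-regularity from the equation itself, and finally to upgrade continuity in the data $(u,\gamma)$ to the weak$^*$ topology by a compactness-and-uniqueness argument; I expect the weak$^*$ continuity to be the main obstacle. Concretely, I would recast \eqref{M} as the fixed-point equation
\[
\mu(t)(E)=u(E)+\int_0^t F(\mu(s),\gamma)(E)\,ds
\]
on a closed ball of $C([0,\tau],\cM_V)$ (signed measures with the sup--total-variation norm) for small $\tau$. Testing $F(\mu,\gamma)-F(\nu,\gamma)$ against $f$ with $\|f\|_\infty\le 1$ and writing the result as $\int_Q G_f(\mu,q)\,\mu(dq)-\int_Q G_f(\nu,q)\,\nu(dq)$, where $G_f(\mu,q)=B(\bar\mu,q)\int_Q f\,d\gamma(q)-f(q)\,D(\bar\mu,q)$, I would split it into a term in which only $B,D$ are differenced and one in which only the measure is differenced. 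The first is bounded using the local Lipschitz continuity of $B,D$ in $s$ from (A1)--(A2) and $|\bar\mu-\bar\nu|\le\|\mu-\nu\|_V$; the second using the sup-bounds of $B,D$ on mass-bounded sets and the fact that each $\gamma(q)$ is a probability measure, so $\bigl|\int_Q f\,d\gamma(q)\bigr|\le 1$. This yields $\|F(\mu,\gamma)-F(\nu,\gamma)\|_V\le L_R\|\mu-\nu\|_V$ on balls of mass at most $R$, with $L_R$ independent of $\gamma$, so Banach's theorem produces a unique local solution.

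Next I would obtain positivity, the mass bound, and globality. Given the local solution, $\bar\mu(\cdot)$ is a known continuous function; freezing it turns the loss term $\int_E D\,d\mu$ into pointwise multiplication by $D(\bar\mu(t),q)$, and the integrating factor $\exp(-\int_s^t D(\bar\mu(r),q)\,dr)$ yields a Duhamel representation of $\mu$ in which $u\ge 0$ and the gain term are nonnegative; by uniqueness for this (now linear) equation the solution coincides with that expression, so $\mu(t)\in\cM_{w,+}$. With $\mu(t)\ge 0$, taking $E=Q$ and using $\gamma(q)(Q)=1$ gives the scalar relation $\bar\mu'(t)=\int_Q[B(\bar\mu(t),q)-D(\bar\mu(t),q)]\,\mu(t)(dq)$; since $B$ is nonincreasing, $B(\bar\mu,q)-D(\bar\mu,q)\le\beta_0:=\sup_{q\in Q}B(0,q)<\infty$, whence $\bar\mu'(t)\le\beta_0\bar\mu(t)$ and $\bar\mu(t)\le\bar\mu(0)e^{\beta_0 t}$. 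This finite bound on every compact time interval precludes blow-up and extends the solution to all $t\ge 0$. Because $t\mapsto\mu(t)$ is continuous in total variation and $F(\cdot,\gamma)$ is total-variation Lipschitz on bounded sets, $t\mapsto F(\mu(t),\gamma)$ is continuous, so $\mu$ is continuously differentiable in total variation and solves \eqref{M}; together with the uniqueness already obtained, this gives statements~2 and 3.

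The remaining statement~1, joint continuity of $(t,u,\gamma)\mapsto\phi(t,u,\gamma)$ in the weak$^*$ topology, is the delicate point: the contraction lives in total variation, whereas the state and parameter spaces carry the weak$^*$ topology, and weak$^*$ convergence of $\gamma_n$ does not control $\int_Q f\,d\gamma_n(q)$ uniformly over all $\|f\|_\infty\le 1$. I would therefore argue by compactness and uniqueness. Given $(t_n,u_n,\gamma_n)\to(t,u,\gamma)$, the masses $u_n(Q)$ are bounded, so by the bound above the orbits remain in a fixed mass-bounded set, which is weak$^*$-compact by Lemma \ref{re:measures-weak-top}. I would extract a weak$^*$-convergent subsequence and pass to the limit in the weak form of \eqref{M} tested against $f\in C(Q)$; this is legitimate because $q\mapsto\int_Q f\,d\gamma_n(q)$ converges to $q\mapsto\int_Q f\,d\gamma(q)$ uniformly in $q$ (as $\gamma_n\to\gamma$ in $C(Q,\cP_w)$) while $B,D$ are continuous. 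The limit then solves \eqref{M} with data $(u,\gamma)$, so by uniqueness it equals $\phi(t,u,\gamma)$; since every subsequence yields the same limit, the full sequence converges, which establishes statement~1 and completes the construction of the dynamical system.
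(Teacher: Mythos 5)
This theorem is quoted in the paper from \cite{CLEVACK} and no proof is given here, so I can only judge your sketch on its own merits; it does follow what is essentially the standard (and, as far as the citation indicates, the original) route: a total-variation contraction argument for local well-posedness, an a priori mass bound for global existence and positivity, and a compactness-plus-uniqueness argument for continuity in the weak$^*$ data. The overall architecture is sound, and your Lipschitz estimate for $F(\cdot,\gamma)$ on mass-bounded balls is correct, with the right splitting and the right use of $\gamma(q)(Q)=1$.

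Two places need more care. First, you run the Banach iteration on a ball of \emph{signed} measures, but $B(s,q)$ and $D(s,q)$ are only defined for $s\ge 0$, and the iterates need not have nonnegative total mass; you must either extend $B,D$ by $B(s,q):=B(s_+,q)$, $D(s,q):=D(s_+,q)$ (harmless, since uniqueness plus your a posteriori positivity argument shows the actual solution never sees $s<0$), or set up the fixed point directly on the closed cone $\cM_{V,+}$ using the Duhamel form with the integrating factor $\exp\bigl(-\int_s^t D(\bar\mu(r),q)\,dr\bigr)$, which preserves positivity term by term. Second, in the proof of statement~1 you ``extract a weak$^*$-convergent subsequence and pass to the limit in the weak form,'' but the weak form is time-integrated, so you need convergence of the whole trajectories $\mu_n(\cdot)$ on compact time intervals, not of a single measure. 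This is fillable but must be said: the uniform bound on $\|F(\mu_n(s),\gamma_n)\|_V$ gives $p(\mu_n(t)-\mu_n(s))\le C|t-s|$ for the metric $p$ of Lemma~\ref{re:measures-weak-top}, so the trajectories are equicontinuous with values in a $p$-compact set, and Arzel\`a--Ascoli yields a subsequence converging uniformly on compacts; only then can you pass to the limit under the time integral (using that $\sup_{q}\bigl|\int_Q f\,d\gamma_n(q)-\int_Q f\,d\gamma(q)\bigr|\to 0$, which does follow from $\gamma_n\to\gamma$ in $C(Q,\cP_w)$ for each fixed $f\in C(Q)$) and invoke uniqueness. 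With those two repairs the sketch becomes a complete proof.
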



\section{ Asymptotic Results with an Arbitrary {Mutation Kernel} $\gamma$ }
\label{sec:asymp-gen}


In this section we begin studying the long time behavior with an arbitrary
{mutation} kernel $\gamma \in C(Q, {\cal P}_w)$. In particular, we provide sufficiency for permanence and uniform persistence. \\

\begin{lemma}
\label{re:pos-pres}
 Let (A1)-(A2) hold and let $\mu$ be a solution
 of (\ref{M}). Then the following holds.

\begin{itemize}
\item[(a)] If $E$ is a Borel subset of $Q$ and  $\mu(0)(E) > 0$, then $\mu(t) (E) > 0$ for all $t \ge 0$.

\end{itemize}

\noindent
Assume in addition that $B(x,q) > 0$ for all $x \ge 0$ and $q \in Q$. Then
the following holds.

\begin{itemize}
\item[(b)] If $E$ and $\tilde E$ are Borel subsets of $Q$ with $\inf_{q\in {\tilde E}} \gamma(q)(E) >0$ and
$\mu(t)({\tilde E}) >0$ for all $t >0$,
then $\mu(t)(E) > 0$ for all $t >0$.
\end{itemize}
\end{lemma}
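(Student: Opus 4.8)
The plan is to reduce both statements to scalar differential inequalities for the real-valued function $m(t) := \mu(t)(E)$ and then integrate. First I would record that $t \mapsto m(t)$ is continuously differentiable with $m'(t) = F(\mu(t),\gamma)(E)$ given by \eqref{M}: by Theorem \ref{main} the orbit $\phi(\cdot,u,\gamma)\colon \R_+ \to \mathcal{M}_{V,+}$ is continuously differentiable, and $\nu \mapsto \nu(E)$ is a bounded linear functional on the total-variation space (since $|\nu(E)| \le \|\nu\|_{TV}$), so it may be composed with this $C^1$ curve. The same reasoning shows that $t \mapsto \bar\mu(t) = \mu(t)(Q)$ and $t \mapsto \mu(t)(\tilde E)$ are continuous. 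Because $\bar\mu$ is continuous and $D$ is continuous on $[0,\infty)\times Q$ with $Q$ compact by (A2), the quantity $d(t) := \sup_{q\in Q} D(\bar\mu(t),q)$ is finite and continuous, hence integrable on each interval $[0,t]$.

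For part (a), I would split \eqref{M} evaluated at $E$ into its birth and death parts. The birth term $\int_Q B(\bar\mu(t),q)\,\gamma(q)(E)\,\mu(t)(dq)$ is nonnegative since $B\ge 0$, $\gamma(q)(E)\in[0,1]$, and $\mu(t)\ge 0$; the death term satisfies $\int_E D(\bar\mu(t),q)\,\mu(t)(dq) \le d(t)\,m(t)$. This yields the linear differential inequality $m'(t) \ge -d(t)\, m(t)$. Multiplying by the integrating factor $\exp\!\big(\int_0^t d(s)\,ds\big)$ gives $\frac{d}{dt}\big(m(t)\,e^{\int_0^t d(s)\,ds}\big)\ge 0$, so $m(t) \ge m(0)\,e^{-\int_0^t d(s)\,ds}$; since $m(0)=\mu(0)(E)>0$, we conclude $m(t)>0$ for all $t\ge 0$.

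For part (b), the extra hypothesis $B>0$ lets me retain a positive source term. With (A1) and the compactness of $Q$, the quantity $b(t):=\inf_{q\in Q}B(\bar\mu(t),q)$ is positive and continuous. Discarding the part of the birth integral outside $\tilde E$ and using $\gamma(q)(E)\ge c:=\inf_{q\in\tilde E}\gamma(q)(E)>0$ on $\tilde E$, I bound the birth term below by $c\,b(t)\,\mu(t)(\tilde E)$. Hence $m'(t) \ge g(t) - d(t)\,m(t)$, where $g(t):=c\,b(t)\,\mu(t)(\tilde E)$ is continuous and strictly positive for $t>0$ by the hypothesis $\mu(t)(\tilde E)>0$. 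The same integrating factor gives $m(t)\,e^{\int_0^t d(s)\,ds} \ge m(0) + \int_0^t g(s)\,e^{\int_0^s d(r)\,dr}\,ds$, and the integral is strictly positive for every $t>0$ because the integrand is continuous, nonnegative, and positive on $(0,t]$. Therefore $m(t)>0$ for all $t>0$, even when $\mu(0)(E)=0$.

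The two Gronwall-type integrations are routine; the step that needs the most care is the passage from the total-variation differentiability supplied by Theorem \ref{main} to the differentiability and the pointwise formula for the scalar functions $m(t)$ and $\mu(t)(\tilde E)$, together with the verification that the coefficient functions $d(t)$, $b(t)$, and $g(t)$ are continuous, so that the integrating-factor argument is legitimate. This rests on the fact that evaluation against a fixed bounded Borel set is a bounded linear functional in total variation and on the compactness of $Q$, which provides the uniform bounds on $B$ and $D$.
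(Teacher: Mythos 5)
Your proposal is correct and follows essentially the same route as the paper: bound the death term by $\theta(t)\mu(t)(E)$ with $\theta(t)=\max_{q\in Q}D(\bar\mu(t),q)$, bound the birth term below (by $0$ in part (a), and by $b(t)\inf_{q\in\tilde E}\gamma(q)(E)\,\mu(t)(\tilde E)$ in part (b)), and integrate the resulting linear differential inequalities with an integrating factor. The only difference is that you spell out explicitly why $t\mapsto\mu(t)(E)$ is differentiable and why the coefficient functions are continuous, which the paper leaves implicit.
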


\begin{proof}
{For a solution $\mu$ of (\ref{M}) and $\bar \mu(t) = \mu(t)(Q)$, set}
\begin{equation}
\label{eq:extreme-vitals}
b(t) = \min_{q\in Q} B(\bar \mu(t),q), \qquad \theta(t) = \max_{q \in Q} D(\bar \mu(t), q), \qquad t \ge 0.
\end{equation}
Since $B$ and $D$ are continuous and $Q$ compact, $b$ and
${\theta}$ are continuous. We have the differential inequalities,
\[
\mu'(t) (E) \ge  b(t) \int_Q \gamma(q)(E) \mu(t)(dq) - \theta(t) \mu(t) (E)
\ge - \theta(t) \mu(t)(E).
\]
The second inequality can be integrated as
\[
\mu(t) (E) \ge \mu(0)(E) \exp \Big ( - \int_0^t \theta(s) ds \Big),
\]
which provides the first statement. Let {$\tilde E$} also be a Borel subset of $Q$. Then
\[
\mu'(t) (E) \ge  b(t) \inf_{q \in {\tilde E}} \gamma(q)(E) \mu(t)({\tilde E}) - \theta(t) \mu(t) (E).
\]
This can be integrated as
$$
\mu(t)(E) \ge \inf_{q \in {\tilde E}} \gamma(q)(E) \int_0^t b(r) \mu(r)({\tilde E})
\exp \Big (- \int_r^t \theta(s)ds\Big ) dr.
$$
This implies the second statement.
\end{proof}

\subsection{Uniform eventual boundedness}
A system $ \frac{dx}{dt} =F(x)$ is called dissipative and its solution uniformly eventually bounded, if all solutions exist for all forward times and if there exists some $c>0$ such that $$ \limsup_{t \rightarrow \infty}||x(t)||< c $$ for all solutions $x$ \cite[pg. 153]{Thi03}.
Next, we will show that the solutions of (\ref{M}) are uniformly eventually bounded. Recall
that $\bar \mu(t)= \mu(t)(Q)$ denotes the total population size at time
$t$.

\begin{theorem}
\label{BS}(Bounds for Solution)
Assume that (A1)-(A3) hold. Then, for any solution $\mu$ of \eqref{M} and $\bar \mu
 = \mu( \cdot)(Q)$,
 we have the following:
 \begin{equation}\label{limbound}
\min\{ k_{\diamond}, \bar \mu(0)\} \le \bar \mu(t) \le \max
\{\bar \mu(0), K^{\diamond} \}, \qquad \text{for all }~  t
~\geq 0,
  \end{equation}
 and \begin{equation}\label{limsupbound}
k_{\diamond}  \leq \liminf_{t \to \infty} \bar \mu (t) \leq
\limsup_{t \to \infty} \bar \mu(t)  \le K^{\diamond} .
\end{equation}
Hence, if $k_{\diamond} >0$
 then the population is permanent.

\end{theorem}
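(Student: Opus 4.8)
The plan is to collapse the measure-valued dynamics to a scalar differential inequality for the total mass $\bar\mu(t)=\mu(t)(Q)$ and then to trap it between two autonomous logistic-type comparison ODEs. First I would specialize \eqref{M} to the test set $E=Q$. Since each $\gamma(q)$ is a probability measure, $\gamma(q)(Q)=1$, so the birth integral collapses and the mortality integral is over all of $Q$, giving
\begin{equation*}
\bar\mu'(t)=\int_Q\big[B(\bar\mu(t),q)-D(\bar\mu(t),q)\big]\,\mu(t)(dq).
\end{equation*}
By Theorem \ref{main}, $t\mapsto\mu(t)$ is continuously differentiable in total variation, so $\bar\mu$ is genuinely $C^1$ and this identity is legitimate.

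Next I would introduce the two envelope functions
\begin{equation*}
\alpha(s)=\min_{q\in Q}\big[B(s,q)-D(s,q)\big],\qquad \beta(s)=\max_{q\in Q}\big[B(s,q)-D(s,q)\big],
\end{equation*}
which are attained by compactness of $Q$ and, since $B,D$ are locally Lipschitz in $s$ uniformly in $q$ by (A1)--(A2), are themselves locally Lipschitz. Because $\mu(t)\ge0$ has total mass $\bar\mu(t)$, bounding the integrand pointwise yields
\begin{equation*}
\alpha(\bar\mu(t))\,\bar\mu(t)\le\bar\mu'(t)\le\beta(\bar\mu(t))\,\bar\mu(t).
\end{equation*}
The essential structural input is the sign pattern of $\alpha$ and $\beta$, which I would read off from (A3) and \eqref{eq:carry-cap-more} using that $\cR(\cdot,q)$ is nonincreasing and that $B(s,q)-D(s,q)>0$ exactly when $s<K(q)$: this gives $\beta(s)>0$ for $s<K^\diamond$, $\beta(K^\diamond)=0$, $\beta(s)<0$ for $s>K^\diamond$, and analogously $\alpha(s)>0$ for $s<k_{\diamond}$, $\alpha(k_{\diamond})=0$, $\alpha(s)<0$ for $s>k_{\diamond}$ (the degenerate case $k_{\diamond}=0$ renders the lower bounds trivial).

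I would then invoke the scalar comparison principle. Let $z$ and $y$ solve $z'=\beta(z)z$ and $y'=\alpha(y)y$ with $z(0)=y(0)=\bar\mu(0)$; local Lipschitz continuity of the right-hand sides gives uniqueness, so the differential inequalities force $y(t)\le\bar\mu(t)\le z(t)$. The sign analysis makes $\beta(s)s$ positive on $(0,K^\diamond)$, zero at $K^\diamond$, and negative beyond, so $z$ is monotone toward the equilibrium $K^\diamond$; hence $z(t)\le\max\{\bar\mu(0),K^\diamond\}$ and $z(t)\to K^\diamond$ whenever $\bar\mu(0)>0$. The same argument for $y$ and $k_{\diamond}$ gives $y(t)\ge\min\{\bar\mu(0),k_{\diamond}\}$ and $y(t)\to k_{\diamond}$. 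Combining the two comparisons yields \eqref{limbound} and \eqref{limsupbound}, and when $k_{\diamond}>0$ the resulting estimate $k_{\diamond}\le\liminf\bar\mu(t)\le\limsup\bar\mu(t)\le K^\diamond$ is precisely permanence of the total population.

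The step I expect to be the main obstacle is the sign analysis of the envelopes: one must verify that the strict inequalities in \eqref{eq:carry-cap-more}, which are asserted only for $q\in Q^\diamond$, propagate correctly to the $\max$ and $\min$ over all of $Q$ — including traits with $\cR_0(q)<1$, for which $K(q)=0$ and $B(s,q)-D(s,q)<0$ for every $s\ge0$ — and that the extremizing trait is genuinely attained so that $\alpha,\beta$ are locally Lipschitz and the comparison principle applies without assuming regularity the model does not supply.
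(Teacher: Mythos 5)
Your proof is correct, and it shares the paper's essential first step — specializing \eqref{M} to $E=Q$ so that $\gamma(q)(Q)=1$ collapses the birth term and yields the scalar identity $\bar\mu'=\int_Q[B-D]\,d\mu=\int_Q[\cR(\bar\mu,q)-1]D(\bar\mu,q)\,\mu(dq)$ — but it finishes with different machinery. The paper does not introduce envelope functions or comparison ODEs: for the pointwise bound it observes directly that $\bar\mu'\le 0$ whenever $\bar\mu>K^\diamond$ and cites a barrier lemma, and for the asymptotic bound it applies the fluctuation lemma to extract a sequence $t_j\to\infty$ with $\bar\mu(t_j)\to\bar\mu^\infty$ and $\bar\mu'(t_j)\to0$, deriving the contradiction $0\le(\xi-1)\delta\bar\mu^\infty<0$ from the uniform bounds $\cR(\bar\mu^\infty,\cdot)\le\xi<1$ and $D(\bar\mu^\infty,\cdot)\ge\delta>0$ when $\bar\mu^\infty>K^\diamond$. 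Your route via $\alpha(s)=\min_q[B(s,q)-D(s,q)]$, $\beta(s)=\max_q[B(s,q)-D(s,q)]$ and the trap $y\le\bar\mu\le z$ buys a more self-contained, quantitative picture (monotone convergence of the comparison solutions to $k_\diamond$ and $K^\diamond$), at the price of needing $\alpha,\beta$ locally Lipschitz — which you correctly extract from the uniformity in (A1)--(A2) — whereas the fluctuation-lemma argument needs only continuity. Your sign analysis is sound, including the observation that the identity $\alpha(k_\diamond)=0$ can fail when $k_\diamond=0$ (traits with $\cR_0(q)<1$ force $\alpha(0)<0$) but that the lower bounds are then vacuous; note that both your argument and the paper's leave the same harmless gap for the identically zero solution, for which $\liminf\bar\mu=0$ regardless of $k_\diamond$.
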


\begin{remark} It seems at first glance that $k_{\diamond} >0$
 is too restrictive of an assumption
for proving persistence, i.e., $ \liminf_{t \rightarrow \infty}
\mu(t)(Q) >0 .$ However, if $k_{\diamond} =0$  and $ \gamma(\hat
q) = \delta_{\mathfrak{q}}$ (i.e., individuals with any strategy only reproduce individuals with strategy $\mathfrak{q}$), then it is an exercise to show that the
model converges to the zero measure even in setwise convergence.
\end{remark}

\begin{proof} We first prove the rightmost inequalities, i.e., those for $\max$ and $\limsup$.
 Let $\bar \mu (t)=\mu(t)(Q)$. First notice that
\begin{equation}
\label{TOTPOPBOUND}
 \begin{array}{lll}
 \bar \mu'(t) &=& \displaystyle
\int_{Q}\Bigl[B(\bar \mu(t),q)-D(\bar \mu(t),q)\Bigr]\mu(t)(d
q) \\[3mm]
&=& \displaystyle
 \int_{Q}\Bigl[\cR(\bar \mu(t),q )-1 \Bigr]D(\bar \mu (t),q)\mu(t)(d q).
\end{array}
\end{equation}

 For the second inequality in \eqref{limbound} we have cases.  First assume $K^{\diamond}
=0$. Using \eqref{TOTPOPBOUND} and the fact that for every $ q \in Q $,  $ \cR(\cdot,q )$
is nonincreasing  we see that $\bar \mu'(t) \leq 0$,
for all $t \geq 0 $. Now assume $0<K^{\diamond}  < \infty
$. Starting from \eqref{TOTPOPBOUND}, we see that if $\bar \mu(t)>
K^{\diamond} $ then $ \bar \mu' \leq 0$; therefore it follows from
basic analysis or \cite[Lemma A.6]{Thi03} that
\begin{equation}
\label{limbound1}
\bar \mu(t)\leqslant \max\{ K^{\diamond},
\bar \mu(0)\},
 \text { for all } t \ge~ 0 .
\end{equation}
In particular, $\bar \mu$ is bounded and we can define
\[
 \bar \mu^\infty := \limsup_{t \to \infty} \bar \mu(t).
\]
By the fluctuation lemma \cite{HHG}\cite[Prop.A.22]{Thi03}, there
exists a sequence $(t_j)$ such that $t_j \to \infty$, $\bar \mu(t_j) \to \bar \mu^\infty$
and $\bar\mu'(t_j) \to 0$ as $j \to \infty$. By (\ref{TOTPOPBOUND})
and the continuity of $\cR$ and $D$
\[
0 = \lim_{j \to \infty} \int_Q [ \cR(\bar \mu^\infty,q) -1 ]
D(\bar \mu^\infty, q ) \mu(t_j) (dq).
\]
Suppose that $\bar \mu^\infty > K^\di$. Then there exists some $\xi \in [0,1)$
and $\delta >0$
such that $\cR(\bar \mu^\infty, q) \le \xi$ and $D(\bar \mu^\infty, q ) \ge \delta$
for all $q \in Q$. So
\[
0 \le (\xi-1) \delta \bar \mu^\infty <0,
\]
a contradiction.

Exchanging $\liminf$  arguments for $\limsup$ arguments,
the lefthand inequalities are proved similarly.
\end{proof}

Recall $\phi$ from Theorem \ref{main}. Let $d (\nu, A) = \inf \{ p(\nu - u); u \in A\}$ be the distance from the
point $\nu \in \cM_{w+}$ to the set $A \subseteq \cM_{w+}$, where $p$ is the norm
defined in (\ref{eq:norm-weak}).

\begin{corollary}
\label{re:compact-glob-attr}
Assume that (A1)-(A3) hold. Then, for any $\gamma \in C(Q, \cP_w)$,
there exists a compact attractor of bounded sets, i.e., a  compact invariant subset $A_\gamma $ of $\cM_{w+}$ such that, for
all bounded subsets $B $ of $\cM_{w+}$, $ d( \phi(t, u, \gamma), A_\gamma) \to 0$
as $t \to \infty$ uniformly for $u \in B$.
Moreover, $\kappa (Q) \le K^\di$ for all $\kappa \in A_\gamma$,
and there is a compact set $ C$ such that $A_\gamma \subseteq
 C$ for all $\gamma \in C(Q, \cP_w)$.
Finally, the attractors $A_\gamma$ are upper semicontinuous, i.e.,
for any $ \eta \in C(Q, \cP_w)$,
\[
 \sup_{\nu \in A_\gamma} d (\nu, A_{\eta}) \to 0
\hbox{ as }\gamma \to \eta.
\]
\end{corollary}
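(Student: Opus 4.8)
The plan is to derive everything from a single compact absorbing set, whose existence is already latent in Theorem \ref{BS}. Throughout I work in the complete metric space $(\cM_{w+}, p)$, and I first record that on the cone $p$ and total mass are comparable: since $|f_k| \le 1$ gives $p_k(\nu) \le \nu(Q)$, the definition \eqref{eq:norm-weak} yields $\nu(Q) \le p(\nu) \le 2\nu(Q)$ for $\nu \in \cM_+$. Hence ``bounded'' means uniformly bounded total mass, and for each $r > 0$ the set $M_r = \{\nu \in \cM_{w+} : \nu(Q) \le r\}$ is $p$-bounded and weak$^*$ closed, so it is compact by Lemma \ref{re:measures-weak-top}.

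First I would produce a compact absorbing set for $\phi(\cdot,\cdot,\gamma)$. Fix $\eps > 0$ and a bounded set $B$ with $r_B = \sup_{u \in B} u(Q)$. By \eqref{limbound} every trajectory starting in $B$ stays in the compact set $M_{\max\{r_B, K^\diamond\}}$. By the same compactness and continuity argument used to prove \eqref{limsupbound}, now applied uniformly on the compact range $[K^\diamond + \eps, \max\{r_B, K^\diamond\}]$, there are $\xi < 1$ and $\delta > 0$, depending only on $\eps$ and $\max\{r_B, K^\diamond\}$, with $\bar\mu'(t) \le (\xi - 1)\delta\,\bar\mu(t) < 0$ whenever $K^\diamond + \eps \le \bar\mu(t)$; thus $\bar\mu$ falls at a uniform rate while it exceeds $K^\diamond + \eps$ and, by continuity, cannot re-enter the region above $K^\diamond + \eps$. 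Hence $M_{K^\diamond + \eps}$ absorbs $B$ in a finite time depending only on $B$. A compact absorbing set being in hand, the standard theory of global attractors for semiflows on complete metric spaces (see, e.g., \cite{Thi03}) provides the compact invariant set $A_\gamma = \omega(M_{K^\diamond + 1})$ attracting all bounded sets.

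The mass bound and the uniform compact set then follow from invariance. Since $A_\gamma$ is bounded and invariant, $\phi(t, A_\gamma, \gamma) = A_\gamma$; as $M_{K^\diamond + \eps}$ absorbs $A_\gamma$ there is $T_\eps$ with $A_\gamma = \phi(t, A_\gamma, \gamma) \subseteq M_{K^\diamond + \eps}$ for $t \ge T_\eps$, so $A_\gamma \subseteq \bigcap_{\eps > 0} M_{K^\diamond + \eps} = M_{K^\diamond}$. This gives $\kappa(Q) \le K^\diamond$ for all $\kappa \in A_\gamma$, and since $K^\diamond$ is determined by $B$ and $D$ alone, the compact set $C := M_{K^\diamond}$ contains $A_\gamma$ for every $\gamma$.

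The hard part will be the upper semicontinuity, which I would prove by contradiction using the joint continuity of $\phi$ from Theorem \ref{main} together with the fact that the global attractor of a semiflow is the union of its complete bounded orbits. Suppose it fails: there are $\delta_0 > 0$, $\gamma_n \to \eta$ and $\nu_n \in A_{\gamma_n}$ with $d(\nu_n, A_\eta) \ge \delta_0$. As all $A_{\gamma_n} \subseteq C$ with $C$ compact, I pass to a subsequence $\nu_n \to \nu \in C$, still with $d(\nu, A_\eta) \ge \delta_0$. Invariance of each $A_{\gamma_n}$ gives a complete orbit $\psi_n : \R \to A_{\gamma_n} \subseteq C$ with $\psi_n(0) = \nu_n$. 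Going forward from $\psi_n(0)$ and backward from the points $\psi_n(-k)$, which lie in the compact set $C$, a diagonal extraction combined with the joint continuity of $\phi$ in $(t, u, \gamma)$ produces a complete orbit $\psi : \R \to C$ of the limiting semiflow $\phi(\cdot,\cdot,\eta)$ with $\psi(0) = \nu$. Being a complete bounded orbit of $\phi(\cdot,\cdot,\eta)$, its range lies in $A_\eta$, so $\nu \in A_\eta$, contradicting $d(\nu, A_\eta) \ge \delta_0$. The two delicate points are the backward extraction, where compactness of $C$ supplies the limits $w_{-k}$ of $\psi_n(-k)$ and the consistency $w_{-(k-1)} = \phi(1, w_{-k}, \eta)$ across successive intervals must be checked, and the verification that the pointwise limit $\psi$ genuinely obeys the semiflow relation for $\eta$ — which is exactly where the joint continuity in all three arguments, rather than mere continuity in $(t,u)$, is essential.
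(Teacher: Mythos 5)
Your proof is correct, and it takes a genuinely different route from the paper's. The paper obtains $A_\gamma$ by verifying the hypotheses of the abstract attractor theorem \cite[Thm.2.33]{SmTh} (point-dissipativity, asymptotic smoothness, eventual boundedness on bounded sets), proves the mass bound $\kappa(Q)\le K^\di$ with the Lyapunov function $V(\nu)=[\nu(Q)-K^\di]_+^2$ together with an $\alpha$-limit-set argument along total trajectories in $A_\gamma$, and gets upper semicontinuity by citing \cite[Prop.2.7]{Mag09}. You instead build a single explicit compact absorbing set: using the equivalence $\nu(Q)\le p(\nu)\le 2\nu(Q)$ on the cone and the uniform estimate $\cR(s,q)\le \xi<1$, $D(s,q)\ge \varpi$ for $(s,q)$ in a compact interval above $K^\di$ times $Q$ --- which does follow from (A1)--(A3), since the uniqueness clause in (A3) forces $\cR(s,q)<1$ strictly for every $s>K(q)$ and every $q$, and then compactness gives the uniform $\xi$ --- you show that $M_{K^\di+\eps}=\{\nu\in\cM_{w+}:\nu(Q)\le K^\di+\eps\}$ absorbs each bounded set in a finite time depending only on that set. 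Everything else then comes essentially for free: $A_\gamma=\omega(M_{K^\di+1})$, the inclusion $A_\gamma\subseteq\bigcap_{\eps>0}M_{K^\di+\eps}=M_{K^\di}=:C$ replaces the paper's Lyapunov computation, and upper semicontinuity is proved from scratch by diagonal extraction of a complete $\eta$-orbit through the limit of the $\nu_n$, using the joint continuity of $\phi$ in $(t,u,\gamma)$ from Theorem \ref{main} and the fact that a bounded complete orbit must lie in the attractor (which follows since $A_\eta$ attracts the compact set $C$ containing the orbit). Your route is more self-contained and elementary --- it avoids \cite{Mag09} entirely and replaces the Lyapunov argument by a one-line absorption observation --- at the cost that the absorbing-set estimate is special to this model, whereas the paper's dissipativity/asymptotic-smoothness framework and the citation of \cite[Prop.2.7]{Mag09} are exactly the pieces that are reused for the parametrized semiflow $\Phi$ on $\cM_{w+}\times\Gamma$ in Theorem \ref{re:compact-attr-Cart} and in the robust persistence results of Theorem \ref{re:pers-robust}.
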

\begin{proof} Consider the semiflow $\phi(\cdot, \gamma)$ defined in Theorem \ref{main}. By Theorems \ref{main}
and  \ref{BS}
and Lemma \ref{re:measures-weak-top},
in the language
of \cite[Def.2.25]{SmTh}, this semiflow is point-dissipative, asymptotically smooth,
and eventually bounded on bounded sets.
Existence of the attractors $A_\gamma$ now follows from \cite[Thm.2.33]{SmTh}.

We define a function $ V : \cM_{w+}(Q)\rightarrow \mathbb{R_+}$ by $V(\nu) = [\nu(Q) - K^\di]_+^2$,
where $r_+ = \max\{r,0\}$ is the positive part of a real number $r$.  $V$ is continuous by
Lemma \ref{re:semicont}. Furthermore,
 $r_+^2$ is differentiable and $\frac{d}{dr} r_+^2 = 2 r_+$.
{Then the orbital derivative of $V$ along (\ref{M}) \cite[p.313]{Hal} is
\[
\dot V(\nu) := \limsup_{t \to 0+} \frac{1}{t} ( V(\phi(t, \nu, \gamma)) - V(\nu))
=
2 [\nu(Q) - K^\di]_+ \int_Q [\cR(\nu(Q),q) -1 ] D(\nu(Q),q )
\nu (dq).
\]
This implies that $\dot V \le 0$ and $(d/dt) V (\phi(t,\nu, \gamma)) =
\dot V (\phi(t,\nu,\gamma)) \le 0$.
So  $V$ is a Lyapunov function in the sense of  Definition 2.49 \cite[ pg.52]{SmTh}.
Finally,
 $\{ \dot V = 0 \} = \{\nu ; \nu(Q) \le K^\di \} = \{V(\nu) =0\}$.
}{
Let $\kappa \in A_\gamma$. Since $A_\gamma$ is invariant, by \cite[Thm.1.40]{SmTh},
there exists a total solution $\mu : \R \to A_\gamma$
of (\ref{M}) such that $\mu (0) = \kappa$.
Then $(d/dt) V(\mu(t)) = \dot V (\mu(t)) \le 0$
and $V \circ \mu$ is decreasing on $\R$. Since $A_\gamma$ is compact,
$\mu$ has an $\alpha$-limit set $\alpha$ on which $V$ is constant. So $\dot V=0$
on  $\alpha$ which implies $\alpha \subseteq \{\nu; \nu(Q) \le K^\di\}= \{V(\nu) =0\}$.
Since $\alpha $ attracts $\mu (t)$ as $t \to - \infty$, $\lim_{t\to -\infty}
V(\mu(t)) =0$. Since $V(\mu(t))$ decreases, $0 = V(\mu(0)) = V(\kappa)$.
 This implies $\kappa (Q) \le K^\di$. This holds for any  $\kappa \in A_\gamma$
 and so $A_\gamma  \subseteq
\{\kappa \in \cM_{w,+}; \kappa (Q) \le K^\di\}$ which is a compact subset
in the $w^*$-topology by the Alaoglu-Bourbaki theorem.}

 The last statement follows from \cite[Prop.2.7]{Mag09}.
\end{proof}

We consider the state space $ X = \cM_{w,+} \times C(Q,\cP_w)$ with the product
topology and the semiflow $\Phi$
on $X$ given by $\Phi(t, (u, \gamma)) = (\phi(t, u, \gamma), \gamma)$.
$X$ can be made a metric space with the metric
$d_X ((u,\gamma), (v, \eta)) = p(u-v) + \|\gamma- \eta\|$ where $u,v \in \cM_{w\,+}$
and $\gamma, \eta \in C(Q, \cP_w)$.
 $\Phi$ is a continuous from $\R_+ \times X$ to $X$ by Theorem \ref{main}.

\begin{theorem}
\label{re:compact-attr-Cart}
 If $\Gamma$ is a compact subset of $C(Q, \cP_w)$, the restriction of the semiflow $\Phi$ to
  $X=\cM_{w,+} \times \Gamma$ has a compact attractor of bounded sets,
  $\cA_\Gamma$,
which satisfies $\cA_\Gamma = \bigcup_{\gamma \in \Gamma} (  A_\gamma \times \{\gamma\} )$,
where $A_\gamma \subseteq \cM_{w+}$ is the attractor of $\phi (\cdot, \gamma)$ in
Corollary \ref{re:compact-glob-attr}. In particular, all elements in $\cA_\Gamma$
are or the form $(\kappa, \gamma)$ with $\kappa (Q) \le K^\di$.
\end{theorem}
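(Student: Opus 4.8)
The plan is to treat $\Phi$ as a skew-product semiflow whose base coordinate $\gamma$ is frozen by the dynamics, to deduce existence of $\cA_\Gamma$ by exactly the route used for Corollary~\ref{re:compact-glob-attr}, and then to carry out a fiberwise identification. First I would record that $\Phi$ is a continuous semiflow on $X=\cM_{w,+}\times\Gamma$: the semigroup property
\[
\Phi(t+s,(u,\gamma)) = \Phi\bigl(t,\Phi(s,(u,\gamma))\bigr)
\]
holds because the second coordinate is constant along orbits and $\phi(\cdot,\gamma)$ is itself a semiflow, while joint continuity is part~1 of Theorem~\ref{main} (and is already noted for the unrestricted product). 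I would then verify the hypotheses of \cite[Thm.2.33]{SmTh} for $\Phi$: point-dissipativity and eventual boundedness on bounded sets come from the a priori bounds of Theorem~\ref{BS} (the first coordinate satisfies $\bar\mu(t)\le\max\{\bar\mu(0),K^\di\}$ and $\limsup_{t}\bar\mu(t)\le K^\di$, while the $\Gamma$-coordinate never leaves the compact set $\Gamma$), and asymptotic smoothness follows since total-variation-bounded subsets of $\cM_{w,+}$ are weak$^*$ relatively compact by Alaoglu--Bourbaki and $\Gamma$ is compact, so forward orbits of bounded sets have compact closure. This yields the compact attractor of bounded sets $\cA_\Gamma$, characterized as the maximal compact invariant set, equivalently the union of all bounded entire $\Phi$-orbits.

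The core of the argument is to identify $\cA_\Gamma$ with the candidate set $\mathcal{S}:=\bigcup_{\gamma\in\Gamma}(A_\gamma\times\{\gamma\})$. I would first show $\mathcal{S}$ is compact. Since $A_\gamma\subseteq C$ for every $\gamma$, where $C$ is the common compact set from Corollary~\ref{re:compact-glob-attr}, we have $\mathcal{S}\subseteq C\times\Gamma$, which is compact; so it suffices to show $\mathcal{S}$ is closed in the metric $d_X$. Given $(u_n,\gamma_n)\in\mathcal{S}$ with $(u_n,\gamma_n)\to(u,\gamma)$, so that $u_n\in A_{\gamma_n}$, $\gamma_n\to\gamma$ and $u_n\to u$, the upper semicontinuity of $\gamma\mapsto A_\gamma$ from Corollary~\ref{re:compact-glob-attr} (applied with fixed limit $\gamma$) gives
\[
d(u_n,A_\gamma)\le \sup_{\nu\in A_{\gamma_n}} d(\nu,A_\gamma)\longrightarrow 0,
\]
and since $u_n\to u$ and $A_\gamma$ is closed, $u\in A_\gamma$, i.e. $(u,\gamma)\in\mathcal{S}$. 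Invariance of $\mathcal{S}$ under $\Phi$ is then immediate: for $(u,\gamma)\in\mathcal{S}$ one has $u\in A_\gamma$, and the invariance of $A_\gamma$ under $\phi(\cdot,\gamma)$ together with the constancy of the $\Gamma$-coordinate yields $\Phi(t,\mathcal{S})=\mathcal{S}$ for all $t\ge0$.

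Finally I would match the two sets through the maximal-invariant-set characterization. As a compact invariant set, $\mathcal{S}\subseteq\cA_\Gamma$ by maximality. Conversely, any $(u,\gamma)\in\cA_\Gamma$ lies on a bounded entire $\Phi$-orbit contained in $\cA_\Gamma$; because the $\Gamma$-coordinate is frozen, this orbit has the form $(\mu(\cdot),\gamma)$ with $\mu$ a bounded entire orbit of $\phi(\cdot,\gamma)$, and every such orbit lies in the fiber attractor $A_\gamma$, so $u=\mu(0)\in A_\gamma$ and $(u,\gamma)\in\mathcal{S}$. Hence $\cA_\Gamma=\mathcal{S}$, and the concluding assertion that every element $(\kappa,\gamma)$ satisfies $\kappa(Q)\le K^\di$ is inherited directly from the corresponding bound on each $A_\gamma$ in Corollary~\ref{re:compact-glob-attr}. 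I expect the one genuinely new point to be the closedness of $\mathcal{S}$, which is precisely where the upper semicontinuity of the fiber attractors is used; everything else is a transcription of the existence machinery for $\phi(\cdot,\gamma)$ to the product, with the delicate uniform-attraction-in-$\gamma$ estimate conveniently absorbed into the abstract attractor theorem applied to $\Phi$ itself.
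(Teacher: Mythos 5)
Your proposal is correct and follows essentially the same route as the paper: existence of $\cA_\Gamma$ via \cite[Thm.2.33]{SmTh} with the same point-dissipativity, eventual boundedness, and Alaoglu--Bourbaki asymptotic-smoothness checks, followed by a two-sided comparison of $\cA_\Gamma$ with $\bigcup_{\gamma\in\Gamma}(A_\gamma\times\{\gamma\})$ using maximality of compact invariant sets. The only cosmetic differences are that you spell out the compactness of the union via the upper semicontinuity of $\gamma\mapsto A_\gamma$ where the paper simply cites Corollary~\ref{re:compact-glob-attr}, and that you obtain the inclusion $\cA_\Gamma\subseteq\bigcup_\gamma(A_\gamma\times\{\gamma\})$ through bounded entire orbits rather than through the fiberwise slices $\tilde A_\gamma=\{\kappa;(\kappa,\gamma)\in\cA_\Gamma\}$ --- an equivalent formulation.
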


\begin{proof}
We apply \cite[Thm.2.33]{SmTh}.
By Corollary \ref{re:compact-glob-attr}, the compact set $\{u \in M_{w,+}; u(Q)\le K^\di\}
\times \Gamma$ attracts all  points in $X$. In particular, $\Phi$ is point-dissipative.

To check that $\Phi$ is asymptotically smooth, let $ B $ be a bounded subset
of $X$ that is forward invariant under $\Phi$. Then $\Phi(\R_+ \times B) \subseteq B$.
Since $X = \cM_{w,+} \times \Gamma$, $\Phi(\R_+ \times B) \subseteq K \times \Gamma$
with $K$ being a bounded subset of $\cM_{w,+}$. {By the Alaoglu-Bourbaki theorem,
 $\Phi(\R_+ \times B)$ is contained in a compact subset of $X$. This implies
 that $\Phi$ is asymptotically compact on $B$.}

$\Phi$ is bounded on every bounded subset $B$ of $X$ by Theorem \ref{BS}.

By \cite[Thm.2.33]{SmTh}, the restriction of $\Phi$ to $X$ has a compact attractor
of bounded sets, $\cA_\Gamma$.

Now let $\gamma \in \Gamma$ and $\tilde A_\gamma = \{\kappa \in \cM_{w+}; (\kappa, \gamma)
\in \cA_\Gamma \}$. Then $\tilde A_\gamma$ is a compact subset of $\cM_{w+}$
that is invariant under $\phi(\cdot, \gamma)$. Since $A_\gamma$ is the compact
attractor of bounded subsets for $\phi(\cdot, \gamma)$, $\tilde A_\gamma \subseteq A_\gamma$.

By Corollary \ref{re:compact-glob-attr}, $\tilde \cA_\Gamma=
\bigcup_{\gamma \in \Gamma} (  A_\gamma \times \{\gamma\})$ is a compact subset of $X$.
Since each $A_\gamma$ is invariant under $\phi(\cdot, \gamma)$, $\tilde \cA_\Gamma$
is invariant under $\Phi$. Since $\cA_\Gamma$ is the compact attractor of all bounded
subsets of $X$ for $\Phi$, $\tilde \cA_\Gamma\subseteq \cA_\Gamma$.
\end{proof}


\subsection{Persistence}


Next, we prove a persistence result for the case where
$k_\diamond$ is not necessarily positive and (A3) does not necessarily
hold. In order to obtain
population persistence, we impose a balancing inequality on some sets with strong strategies. A strong strategy, $q$, is one that has $\mathcal{R}(0,q)> 1.$ So if {$E \subseteq Q$} consists entirely of strong strategies, and if a member of $E$ contributes on average more than one of its offspring to $E$, then the population will persist. Mostly, the strong strategies need to play the balancing act \eqref{balance} if the population is to survive, and on average only extremely strong traits can afford to have large proportions of their offspring be weak.

\begin{definition}
\label{def:irred}
 Let $E$ be a Borel subset of $Q$. A kernel $\gamma \in C(Q, \cP_w )$
is called $E$-irreducible if for every solution $\mu$ of (\ref{M}) with
$\mu(0)(Q) > 0$ there exists some $r \ge 0$ such that $\mu(r)(E) >0$.

The kernel $\gamma$ is called uniformly $E$-irreducible if $r$ does not
depend on $\mu$.

\end{definition}

\begin{theorem}
\label{persistence}

Assume that (A1)-(A2) hold and let $\epsilon > 0$ and $ E \subseteq Q $ be a Borel set
such that
 \begin{equation}
\label{balance}
 \inf_{q \in E} \mathcal{R}(\epsilon,q) \gamma(q)(E) > 1 .
\end{equation}

\begin{itemize}
\item[(a)]
Then the population is uniformly weakly persistent in the
 sense that $\limsup_{t\to \infty} \mu(t)(Q) \ge \epsilon$ for all solutions
 with $\mu(0)(E) > 0$.

\item[(b)] Assume in addition that {the} kernel $\gamma$ is $E$-irreducible.
Then  $\limsup_{t\to \infty} \mu(t)(Q) \ge \epsilon$ for all solutions
 with $\mu(0)(Q) > 0$.
 \end{itemize}
\end{theorem}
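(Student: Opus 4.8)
The plan is to derive a closed differential inequality for the sub-population $t\mapsto \mu(t)(E)$ and then argue by contradiction. Since the gain term in \eqref{M} integrates the nonnegative function $q\mapsto B(\bar\mu(t),q)\gamma(q)(E)$ against the nonnegative measure $\mu(t)$, I may discard the contribution of $Q\setminus E$ and retain only the mass born back into $E$. Using that $t\mapsto\mu(t)(E)$ is differentiable by Theorem \ref{main}, this gives the fundamental estimate
\[
\mu'(t)(E) \ge \int_E \big[B(\bar\mu(t),q)\gamma(q)(E) - D(\bar\mu(t),q)\big]\mu(t)(dq)
= \int_E D(\bar\mu(t),q)\big[\mathcal{R}(\bar\mu(t),q)\gamma(q)(E) - 1\big]\mu(t)(dq).
\]
Everything else consists of extracting a uniform exponential rate from this inequality.

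For part (a), suppose the conclusion fails, so that $\limsup_{t\to\infty}\bar\mu(t) < \epsilon$; then there is a time $t_0$ with $\bar\mu(t)\le\epsilon$ for all $t\ge t_0$. Because $B$ is nonincreasing and $D$ nondecreasing in the population size, $\mathcal{R}(\cdot,q)$ is nonincreasing, whence $\mathcal{R}(\bar\mu(t),q)\ge\mathcal{R}(\epsilon,q)$ for $t\ge t_0$ and every $q$. Writing $1+\eta:=\inf_{q\in E}\mathcal{R}(\epsilon,q)\gamma(q)(E)$, the balancing hypothesis \eqref{balance} is precisely $\eta>0$, and combining this with $D(\bar\mu(t),q)\ge D(0,q)\ge\varpi>0$ from (A2) turns the estimate above into
\[
\mu'(t)(E) \ge \eta\int_E D(\bar\mu(t),q)\mu(t)(dq) \ge \eta\varpi\,\mu(t)(E), \qquad t\ge t_0.
\]
Since $\mu(0)(E)>0$, Lemma \ref{re:pos-pres}(a) gives $\mu(t_0)(E)>0$, and integrating this linear inequality yields $\mu(t)(E)\ge\mu(t_0)(E)e^{\eta\varpi(t-t_0)}\to\infty$, contradicting $\mu(t)(E)\le\bar\mu(t)\le\epsilon$ for $t\ge t_0$. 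This proves (a).

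For part (b) I know only that $\mu(0)(Q)>0$, so I first invoke $E$-irreducibility to obtain some $r\ge 0$ with $\mu(r)(E)>0$; Lemma \ref{re:pos-pres}(a) then propagates positivity to $\mu(t)(E)>0$ for all $t\ge r$. Replacing the starting time $t_0$ of part (a) by $\max\{t_0,r\}$, the identical exponential-growth contradiction applies and gives (b).

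The argument is short, and the only place demanding care is the passage to a \emph{uniform} exponential rate, which is where I expect the crux to lie. Two ingredients are essential and pinpoint exactly how the hypotheses enter: the strict inequality in \eqref{balance} supplies the positive surplus $\eta$, while the inherent mortality bound $\varpi>0$ in (A2) converts the weighted surplus $\eta\int_E D\,d\mu$ into a genuine multiple $\eta\varpi$ of $\mu(t)(E)$. Without $\varpi>0$ one would obtain only monotonicity of $\mu(t)(E)$ rather than divergence. It is also worth noting that (A3), and hence $k_\diamond>0$, is never used, consistent with the remark preceding the theorem that persistence need not rely on a positive minimal carrying capacity.
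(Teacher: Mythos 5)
Your proof is correct and follows essentially the same route as the paper's: discard the birth contribution from $Q\setminus E$, use monotonicity of $B$ and $D$ under the assumption $\limsup_{t\to\infty}\bar\mu(t)<\epsilon$ to extract a uniform positive exponential rate for $\mu(t)(E)$ from the balancing condition \eqref{balance}, and derive a contradiction with boundedness; part (b) is reduced to part (a) via $E$-irreducibility exactly as in the paper. The only cosmetic difference is that you lower-bound the mortality factor by $\varpi=\inf_q D(0,q)$ where the paper uses $\inf_{q\in E}D(\epsilon,q)$, which changes nothing.
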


\begin{proof}
(a) Assume that $\limsup_{t\to \infty} \mu(t)(Q)< \epsilon$. Then for
sufficiently large $t$,  since $B$ is nonincreasing and $D$ is
 nondecreasing  in the first variable and $\mu(t)(Q)$ is nonnegative,
$$
\begin{array}{ll}
 \mu' (t)(E) & \displaystyle \ge \int_E B(\epsilon,q) \gamma(q) (E) \mu(t)(d q) -
\int_E D(\epsilon,q) \mu(t)( dq)\\
& \displaystyle = \int_E [{\cal R} (\epsilon,q) \gamma(q)(E) -1] D(\epsilon,q)
\mu(t)(dq) . \end{array}
$$
Thus,
$$
\mu'(t)(E) \ge [\inf_{q\in E} {\cal R}(\epsilon,q) \gamma(q)(E) -1]
\int_E D(\epsilon,q) \mu(t)(dq).
$$
Then,
$$\mu'(t)(E) \ge [\inf_{q\in E} {\cal R}(\epsilon,q)\gamma(q)(E)-1]
\inf_{q\in E} D(\epsilon,q) \mu(t)(E).$$
So $\mu(t) (E) \to \infty$ because $\mu(0)(E) > 0$ and $\inf_{q\in E}
D(\epsilon,q)> 0$ by (A2).
This contradicts $\limsup_{t\to \infty} \mu(t) (Q) < \epsilon$.

(b) Now assume that $\gamma$ is $E$-irreducible and $\mu$ is a solution with
$\mu(0)(Q) > 0$. Then there exists some $r >0$ such that $\mu(r) (E) >0$
and $\limsup_{t \to \infty} \mu(t) (Q) > \epsilon$ by our previous result.
\end{proof}

\begin{theorem}
\label{re:persistence-strong}

Assume that (A1)-(A2)  hold and let  $ E \subseteq Q $ be a Borel set such that
$\gamma$ is $E$-irreducible and
 \begin{equation}\label{balance2} \inf_{q\in E}  \mathcal{R}(\epsilon,q) \gamma(q)(E) > 1, \text  {
for some  } \epsilon > 0 . \end{equation}
Then the following hold:
\begin{itemize} \item[(a)] The population is uniformly  persistent in the following sense:
There exist some $\epsilon_0 > 0$ such that $\liminf_{t\to \infty} \mu(t)(Q) \ge \epsilon_0$ for all solutions
with $\mu(0)(Q) > 0$.

\item[(b)] If (A3) holds as well and $\gamma$ is uniformly $E$-irreducible, then for every $f \in C_+(Q)$ with $\inf f(E) >0$
there exists some $\delta_f >0$ such that $\liminf_{t \to \infty}
\int_Q f(q) \mu(t)(dq) \ge \delta_f$ for all solutions with $\mu(0)(Q) > 0$.

\item[(c)] If (A3) holds and $E$ is open and $\gamma$ is uniformly $E$-irreducible, then there exists some $\delta_E >0$
such that $\liminf_{t \to \infty} \mu(t) (E) \ge \delta_E$
for all solutions with $\mu(0)(Q) > 0$.
\end{itemize}
\end{theorem}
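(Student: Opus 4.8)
\emph{Plan.} The three parts build on one another, so I would establish (a) first and then bootstrap to (b) and (c). For (a), the idea is to promote the uniform \emph{weak} persistence already supplied by Theorem~\ref{persistence}(b) to uniform \emph{strong} persistence via the abstract machinery of \cite{SmTh}. First I would fix the persistence function $\rho:\cM_{w,+}\to\R_+$, $\rho(\nu)=\nu(Q)$, which is weak$^*$-continuous (take $f\equiv 1$ in Lemma~\ref{re:semicont}), and observe that $X_0=\{\nu:\nu(Q)>0\}$ is forward invariant by Lemma~\ref{re:pos-pres}(a) with $E=Q$. Since $E$-irreducibility and \eqref{balance2} are exactly the hypotheses of Theorem~\ref{persistence}(b), that theorem gives uniform weak $\rho$-persistence with threshold $\epsilon$. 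Next I would record asymptotic smoothness: every bounded subset of $\cM_{w,+}$ is relatively weak$^*$-compact by Alaoglu--Bourbaki, as in the proof of Corollary~\ref{re:compact-glob-attr}. Finally, the only invariant set in the extinction set $\{\nu:\nu(Q)=0\}$ is the fixed point $0$; it is isolated, and weak persistence forbids a total trajectory in $X_0$ from having $0$ in its $\alpha$- or $\omega$-limit set, so $\{0\}$ is acyclic. Feeding these facts into the weak-to-strong persistence theorem of \cite{SmTh} yields the uniform bound $\epsilon_0$.

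The delicate point in (a) is that (A3) is \emph{not} assumed, so Theorem~\ref{BS} does not apply and solutions need not be bounded; consequently there may be no global compact attractor and the point-dissipative form of the persistence theorem is unavailable. What rescues the argument is that compactness is needed only \emph{near} the extinction fixed point: whenever $\nu(Q)$ is close to its limit inferior the measure is bounded in total variation (for a nonnegative measure $\|\nu\|=\nu(Q)$), hence weak$^*$-relatively compact, so a Butler--McGehee/limit-trajectory argument can be localized around $0$. I expect verifying this localized compactness and acyclicity to be the main obstacle; the monotonicity (A1)--(A2), which forces $\cR$ to increase as the total mass drops, is the structural reason the self-regulating flow cannot be driven to extinction.

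For (b) and (c) I now add (A3), so Corollary~\ref{re:compact-glob-attr} supplies the compact attractor $A_\gamma$ with $\kappa(Q)\le K^\di$, and Theorem~\ref{BS} makes every solution eventually bounded. Combining this with the uniform persistence from (a), the theory of \cite{SmTh} furnishes a \emph{persistence attractor} $A_1\subseteq X_0$: a compact invariant set attracting every bounded subset of $X_0$, on which compactness and continuity of $\rho$ give $\underline\rho:=\inf_{\kappa\in A_1}\kappa(Q)>0$. The heart of (b) and (c) is the claim that $\kappa(E)>0$ for \emph{every} $\kappa\in A_1$. To see this I would use invariance: each $\kappa\in A_1$ lies on a total trajectory $\sigma:\R\to A_1$ with $\sigma(0)=\kappa$ (\cite[Thm.~1.40]{SmTh}); since $\sigma(-r)(Q)\ge\underline\rho>0$, where $r$ is the \emph{common} irreducibility time furnished by uniform $E$-irreducibility, applying the definition to the forward solution $s\mapsto\sigma(-r+s)$ gives $\sigma(0)(E)=\kappa(E)>0$. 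Uniformity of the irreducibility time is precisely what lets one use the single backward shift $-r$ for all points of $A_1$.

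With the claim in hand, both conclusions follow by transferring a positive bound from $A_1$ to the $\liminf$ along solutions, whose orbits are eventually bounded (Theorem~\ref{BS}) and hence attracted to $A_1$. For (b), the map $\nu\mapsto\int_Q f\,d\nu$ is weak$^*$-continuous and, for $\kappa\in A_1$, one has $\int_Q f\,d\kappa\ge(\inf_E f)\,\kappa(E)>0$; thus $\delta_f:=\min_{A_1}\int_Q f\,d\kappa>0$ is attained, and continuity together with attraction to $A_1$ gives $\liminf_{t\to\infty}\int_Q f\,d\mu(t)\ge\delta_f$. For (c), openness of $E$ makes $\nu\mapsto\nu(E)$ lower semicontinuous (Lemma~\ref{re:semicont}(a)); a lower semicontinuous function still attains its infimum $\delta_E$ on the compact set $A_1$, and $\delta_E>0$ by the claim. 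Lower semicontinuity plus compactness produce, for any $\delta>0$, a neighborhood of $A_1$ on which $\nu(E)>\delta_E-\delta$, and attraction of the orbit then yields $\liminf_{t\to\infty}\mu(t)(E)\ge\delta_E$. The only place openness of $E$ is genuinely used is to secure this lower semicontinuity in (c).
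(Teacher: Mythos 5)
Your proposal is correct and follows essentially the same route as the paper: part (a) is the weak-to-strong persistence upgrade using compactness of the sublevel sets $\{\nu:\nu(Q)\le c\}$ (the paper cites \cite[Thm.A.32]{Thi03} for exactly this localized-compactness version, confirming your observation that (A3) is not needed there), and parts (b)--(c) are obtained, as in the paper's application of \cite[Thm.4.21]{SmTh}, by evaluating a total trajectory in the persistence attractor at time $-r$ with $r$ the uniform irreducibility time, using continuity of $\nu\mapsto\int f\,d\nu$ for (b) and lower semicontinuity of $\nu\mapsto\nu(E)$ for (c).
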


\begin{proof}
\begin{itemize}
\item[(a)] By Theorem \ref{persistence}, in the language of \cite[A.5]{Thi03},
the semiflow induced by (\ref{M})
on $\cM_{w,+}$
is uniformly weakly $\rho$-persistent for $\rho: \cM_{w+} \to \R_+$,
$\rho(\mu) = \mu(Q)$. Further the sets $\{\rho \le c\}$ are compact
in $\cM_{w,+}$ for each $c >0$. By \cite[Thm.A.32]{Thi03},
the semiflow induced by (\ref{M})
on $\cM_{w,+}$ is uniformly $\rho$-persistent.
\item[(b)]Now assume (A3) in addition and that $\gamma$ is uniformly $E$-irreducible.
{For $f \in C_+(Q)$}, we apply \cite[Thm.4.21]{SmTh} with
\[
\tilde \rho (\mu) = \int_Q  f(q)  \mu(dq).
\]
By Theorem \ref{BS}, there exists a compact subset $C$ of $M_{w_+}$
such that $\mu(t) \in C$ for sufficiently large $t > 0$. Let $\mu(\cdot):
\R \to C_\epsilon $ be a total trajectory where $C_\epsilon = C \cap \{{\tilde \rho }\ge \epsilon\}$.
In our situation, a total trajectory is a solution $\mu$  that is defined for all $t \in \R$. Choose $r >0$ from the uniform $E$-irreducibility definition. Since $\mu(-r)(Q) > 0$,
we have $\mu(0)(E) >0$. Since  $\inf f(E)>0$, $\tilde \rho(\mu(0))> 0$.
~
Since $\tilde \rho$ is continuous, the semiflow is uniformly $\tilde \rho$-persistent
by \cite[Thm.4.21]{SmTh}.
~
\item[(c)] Now assume (A3) and that $E$ is open.
This time, we apply \cite[Thm4.21]{SmTh} with $\tilde \rho(\nu) = \nu (E)$. By Lemma \ref{re:semicont},
$\tilde \rho$ is lower semicontinuous.
\end{itemize}
\end{proof}

\begin{remark}
\label{re:gammas}
If $E \subseteq Q$ is open, the following hold:

\begin{itemize}
\item[(a)] Assumption  \eqref{balance2}  can be replaced by $\cR(0,q)\gamma(q)(E) >1$
for all $q \in \bar E$.

\item[(b)] If $\Gamma$ is a compact subset of $C(Q, \cP_w )$ and
$\cR(0,q)\gamma(q)(E) >1$
for all $q \in \bar E$ and all $\gamma \in \Gamma$, then there
exists some $\epsilon > 0$ such that $\inf_{q \in Q} \cR(\epsilon,q)\gamma(q)(E) >1$
for all $\gamma \in \Gamma$.
\end{itemize}
\end{remark}

\begin{proof} (a)
Suppose there is no $\epsilon >0$ such that
\[
\inf_{q \in E} \cR(\epsilon, q) \gamma(q,E) > 1.
\]

Let $(\epsilon_n)$ be a sequence of positive numbers such
that $\epsilon_n \to 0$. Then there exists a sequence $(q_n)$ in $E$
such that $\liminf_{n \to \infty} \cR(\epsilon_n, q_n) \gamma(q_n,E) \le 1$.
After choosing subsequences, we can assume that $q_n \to q$ for some $q \in \bar E$.
By continuity of $\cR$ and lower semi-continuity of $\gamma(\cdot) ( E)$
(recall Lemma \ref{re:semicont} (a)),
\[
\liminf_{n \to \infty} \cR(\epsilon_n, q_n) \gamma(q_n)(E)
\ge \liminf_{n \to \infty} \cR(\epsilon_n, q_n) \liminf_{n \to \infty} \gamma(q_n)(E)
\ge \cR(0,q) \gamma(q)(E) >1,
\]
a contradiction.

Part (b) is shown similarly using Lemma \ref{re:semicont} (b).
\end{proof}

\begin{theorem}
\label{re:persistence-unif}

Assume that (A1)-(A3)  hold and let  $ E \subseteq Q $ be an open set
and $\Gamma$ be a compact subset of $C(Q, \cP_w)$
with the
following properties:

\begin{enumerate}

\item   $\displaystyle    \mathcal{R}(0,q) \gamma(q)(E) > 1$
for all $q \in \bar E$ and all $\gamma \in \Gamma$.

\item All $\gamma \in \Gamma$ are $E$-irreducible.

\end{enumerate}

Then the population is uniformly  persistent in the following sense:
There exist some $\epsilon_0 > 0$ such that $\liminf_{t\to \infty} \mu(t)(E) \ge \epsilon_0$ for all solutions
$\mu$ of (\ref{M}) with $\mu(0)(Q) > 0$ and $\gamma \in \Gamma$.

\end{theorem}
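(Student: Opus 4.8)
The plan is to run the abstract persistence machinery on the product semiflow $\Phi$ on $X=\cM_{w,+}\times\Gamma$, and to let the compact attractor $\cA_\Gamma$ of Theorem \ref{re:compact-attr-Cart} supply the uniformity over the whole family $\Gamma$ in one stroke. Concretely, I would carry out two persistence steps in succession, mirroring the proof of Theorem \ref{re:persistence-strong} but lifted to $X$: first establish uniform persistence of the total mass $u(Q)$, then bootstrap from $u(Q)$ to $u(E)$.

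The first task is to make the balance inequality \eqref{balance2} hold with one and the same $\epsilon$ for every $\gamma\in\Gamma$. Since $E$ is open with $\bar E$ compact, $\cR(0,\cdot)$ is continuous, and $(\gamma,q)\mapsto\gamma(q)(E)$ is lower semicontinuous (Lemma \ref{re:semicont}(b)), hypothesis~1 together with Remark \ref{re:gammas}(b) furnishes such a uniform $\epsilon>0$; this is the only place where compactness of $\Gamma$ enters the estimates, and it is what renders all subsequent bounds $\gamma$-independent. With this $\epsilon$ fixed, the differential-inequality computation of Theorem \ref{persistence}(a) applies verbatim and gives $\limsup_{t\to\infty}\mu(t)(Q)\ge\epsilon$ for every solution with $\mu(0)(E)>0$ and every $\gamma\in\Gamma$; $E$-irreducibility of each $\gamma$, with the positivity in Lemma \ref{re:pos-pres}(a), extends this to all solutions with $\mu(0)(Q)>0$. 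The sublevel sets $\{u(Q)\le c\}\times\Gamma$ are compact in $X$ (Alaoglu--Bourbaki for the measure factor and compactness of $\Gamma$ for the other), so \cite[Thm.A.32]{Thi03} upgrades uniform weak to uniform strong persistence of $u(Q)$: there is $\epsilon_0>0$ with $\liminf_{t\to\infty}\mu(t)(Q)\ge\epsilon_0$ for all such solutions and all $\gamma\in\Gamma$.

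For the bootstrap I would apply \cite[Thm.4.21]{SmTh} to $\Phi$ on $X$ with the lower semicontinuous persistence function $\rho(u,\gamma)=u(E)$ (open $E$, Lemma \ref{re:semicont}(a)). The compact attractor $\cA_\Gamma$ provides the required global compactness, and the nontriviality input comes from its total trajectories: each lives over a single fixed $\gamma$ and, by the previous step, carries positive total mass $\mu(t)(Q)\ge\epsilon_0$ at all times, so $E$-irreducibility of that one $\gamma$ produces a time at which $\mu(E)>0$, after which $\mu(t)(E)>0$ for all later $t$ by Lemma \ref{re:pos-pres}(a). Hence no total trajectory in $\cA_\Gamma$ with positive total mass is trapped in the extinction set $\{\rho=0\}$, which is exactly the hypothesis that converts the weak persistence into a single uniform bound $\epsilon_0'>0$ with $\liminf_{t\to\infty}\mu(t)(E)\ge\epsilon_0'$ for all solutions with $\mu(0)(Q)>0$ and all $\gamma\in\Gamma$.

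I expect the real difficulty to be the uniformity over the infinite-dimensional family $\Gamma$ rather than the persistence argument for an individual kernel, which is already contained in Theorem \ref{re:persistence-strong}. The subtle point is that only (non-uniform) $E$-irreducibility is assumed, so there is no $\gamma$-uniform ``reach $E$ in bounded time'' estimate to feed into a weak-persistence bound for $u(E)$ directly; the device that resolves this is to route everything through the product attractor $\cA_\Gamma$, applying irreducibility one total trajectory---hence one $\gamma$---at a time and reading the uniform constant $\epsilon_0'$ off the abstract theorem rather than off the irreducibility data.
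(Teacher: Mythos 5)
Your proposal follows essentially the same route as the paper: the product semiflow on $\cM_{w,+}\times\Gamma$, Remark \ref{re:gammas}(b) to make the balance inequality hold with one $\epsilon$ for all $\gamma\in\Gamma$, Theorem \ref{persistence} plus compactness of the sublevel sets to upgrade weak to strong persistence of the total mass, and then \cite[Thm.4.21]{SmTh} with the lower semicontinuous persistence function $u\mapsto u(E)$. The one caveat --- shared with the paper's own terse conclusion, which defers to the proof of Theorem \ref{re:persistence-strong} --- is that the total-trajectory hypothesis of \cite[Thm.4.21]{SmTh} requires $\mu(0)(E)>0$ (equivalently, by time-shifting, $\mu(t)(E)>0$ for \emph{all} $t\in\R$), not merely that the trajectory is not trapped in $\{u(E)=0\}$; plain $E$-irreducibility only yields $\mu(t)(E)>0$ from some time onward, so to close this step one really needs the reaching time to be controlled, as in the uniform $E$-irreducibility invoked in Theorem \ref{re:persistence-strong}(b).
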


\begin{proof}
We consider the state space $X = \cM_{w+} \times \Gamma$ with the product
topology and the semiflow $\Phi$
on $X$ given by $\Phi(t, (u, \gamma)) = (\phi(t, u, \gamma), \gamma)$.
$X$ can be made a metric space and $\Phi$ is continuous on $\R_+ \times X$.
We first choose the persistence function $\rho(u, \gamma) = u(Q)$.
By {Theorem \ref{persistence}} and Remark \ref{re:gammas} (b), $\Phi$ is uniformly
weakly $\rho$-persistent. By Theorem \ref{BS} and Lemma \ref{re:measures-weak-top}, there exists a compact
subset $C$ of $X$ such that, for all $(u,\gamma) \in X$,  $\Phi(t,u, \gamma) \in C$
for all sufficiently large $t > 0$. By \cite[Thm.4.13]{SmTh}, $\Phi$
is uniformly $\rho$-persistent. Now use a second persistence functions
$\tilde \rho(u, \gamma)= u(E)$.
By Lemma \ref{re:semicont}, $\tilde \rho$ is lower semicontinuous.
The statement now follows from \cite[Thm.4.21]{SmTh}
similarly as in the proof of Theorem \ref{re:persistence-strong}.
\end{proof}


\subsection{Robust persistence for optimum preserving mutation kernels}

We now consider $E = Q^\di$ and try to drop the irreducibility assumption.
As trade-off we assume that $Q^\di$ is open (which implies that $Q$ is disconnected)
and that there are no mutation losses for strategies in $Q^\di$.

\begin{definition}
\label{def:kernel-opt-pres}
A mutation kernel $\gamma^\di \in C(Q, \cP_w)$ is called  {\em optimum preserving}
if
 \[
 \gamma^\di (q) (Q^\di) =1 \hbox{  for all } q \in Q^\di.
 \]
\end{definition}

Assume that $Q^\di$ is an open subset of $Q$. Recall that $K^\di = \sup K(Q)$
and $Q^\di = \{q \in Q; K(q) = K^\di\}$. Since $\tilde Q = Q \setminus Q^\di$
is compact, $\tilde K := \sup K(\tilde Q) < K^\di$.

Choose some $\epsilon_0 > 0$ such that
\begin{equation}
\label{eq:pers-rob1}
\sup K(\tilde Q ) + 3  {\epsilon_0} < K^\di.
\end{equation}
Let $ E = Q^{\diamond} $ and $\psi( \gamma, q)=\gamma(q)(E)$  be as in Lemma \ref{re:semicont}. Then  $${ \cal F}(\gamma, q) =\cR( K^{\diamond} - \epsilon_0, q)\psi(\gamma,q) $$  is continuous, because $Q^{\diamond}$ is both open and closed. Hence,
there exists some $\delta_0 > 0$ such that
\begin{equation}
\label{eq:pers-rob2}
\inf_{q \in Q^\di} \cR(K^\di - {\epsilon_0}, q)  \gamma(q)(Q^\di) > 1, \qquad \|\gamma -\gamma^\di\|
< \delta_0.
\end{equation}

Here, $\|\cdot \|$ is the norm on $C(Q, \cM_p)$ defined in (\ref{eq:metric-mutation}). \\

\begin{theorem}
\label{re:pers-robust}
Assume (A1), (A2) and (A3), $K^\di > 0$, and that $Q^\di$ is an open subset of $Q$.
If  $\gamma^\di \in C(Q, \cP_w)$ is an optimum preserving  mutation kernel,
the following hold:

\begin{itemize}

\item[(A)]
Then there exists some $\tilde \delta \in (0, \delta_0) $ such that
$\displaystyle \liminf_{t \to \infty} \phi(t, u, \gamma)(Q^\di) \ge \tilde \delta $
for all $u \in \cM_{w+}$ with $u(Q^\di) > 0$ and all
$\gamma \in C(Q, \cP_w)$ with $\|\gamma- \gamma^\di\| < \tilde \delta$.

\item[(B)] For all $\gamma \in C(Q, \cP_w)$ with $\|\gamma- \gamma^\di\| < \tilde \delta$
with $\tilde \delta > 0$ from (A),
there exists a persistence attractor $\tilde A_\gamma$, i.e.,
a compact invariant stable subset $\tilde A_\gamma$ with the following properties:

\begin{itemize}

\item[$\bullet$] $\nu (Q^\di) > 0 $ for all $\nu \in \tilde A_\gamma $.

\item[$\bullet$] For all compact subsets $V$ of $\cM_{w+}$ with $\inf_{u \in V} u(Q^\di) > 0$
 there exists a neighborhood $U$ such that $d( \phi(t, u, \gamma), \tilde A_\gamma)
\to 0$ as $t \to \infty$ uniformly for $u \in U$.

\end{itemize}

\item[(C)] The attractors $\tilde A_\gamma$, $\|\gamma - \gamma^\di\|< \tilde \delta$, in part (B)
 are upper semicontinuous at $\gamma^\di$:
For all open subsets $W $ with $\tilde A_{\gamma^\di} \subseteq W \subseteq
\cM_{w+}$, there exists some $\delta_W \in (0,\tilde \delta)$ such that
$\tilde A_\gamma \subseteq W$ for all $\gamma \in C(Q, \cP_w)$ with $\|\gamma- \gamma^\di\|
< \delta_W$.

\end{itemize}

\end{theorem}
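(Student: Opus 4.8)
The three parts are handled in order, with part~(A) carrying the essential content and parts~(B)--(C) following from the persistence machinery together with the attractor results already in hand. For~(A) the plan is to first establish \emph{uniform weak} persistence of the functional $\rho(\mu)=\mu(Q^\di)$ directly from differential inequalities, and then bootstrap to the robust strong statement. Fix $\gamma$ with $\|\gamma-\gamma^\di\|<\delta_0$ and a solution $\mu$ with $\mu(0)(Q^\di)>0$; by Lemma~\ref{re:pos-pres}(a), $\mu(t)(Q^\di)>0$ for all $t$. Arguing by contradiction, suppose $\limsup_{t\to\infty}\mu(t)(Q^\di)<\epsilon$ for a threshold $\epsilon$ to be fixed. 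The first step is to force the total mass $\bar\mu(t)=\mu(t)(Q)$ below $K^\di-\epsilon_0$. Splitting the identity~\eqref{TOTPOPBOUND} over $Q^\di$ and $\tilde Q=Q\setminus Q^\di$, the $Q^\di$-integral is bounded in absolute value by $M\epsilon$ (bounded integrand against small mass), while the buffer~\eqref{eq:pers-rob1} guarantees that once $\bar\mu(t)\ge K^\di-\epsilon_0>\sup K(\tilde Q)$ one has $\cR(\bar\mu(t),q)\le\xi<1$ uniformly for $q\in\tilde Q$, so the $\tilde Q$-integral is at most $-(1-\xi)\varpi\,\mu(t)(\tilde Q)$. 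Since $\mu(t)(\tilde Q)=\bar\mu(t)-\mu(t)(Q^\di)$ is then bounded below by a positive constant, choosing $\epsilon$ small makes $\bar\mu'(t)<0$ whenever $\bar\mu(t)\ge K^\di-\epsilon_0$; hence $\bar\mu(t)<K^\di-\epsilon_0$ for all large $t$.

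Once $\bar\mu(t)<K^\di-\epsilon_0$, the optimum-preserving structure is exploited: dropping the nonnegative mutation inflow from $\tilde Q$ into $Q^\di$, using that $\cR$ is nonincreasing in its first argument, and invoking the robust balance inequality~\eqref{eq:pers-rob2} together with $D\ge\varpi$ from (A2), one obtains
\[
\mu'(t)(Q^\di)\;\ge\;\Big(\inf_{q\in Q^\di}\cR(K^\di-\epsilon_0,q)\,\gamma(q)(Q^\di)-1\Big)\,\varpi\,\mu(t)(Q^\di)\;=\;\eta\,\varpi\,\mu(t)(Q^\di)
\]
with $\eta>0$, so $\mu(t)(Q^\di)$ grows exponentially, contradicting $\limsup_{t\to\infty}\mu(t)(Q^\di)<\epsilon$. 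Because $\epsilon$ depends only on $\epsilon_0,\delta_0,\xi,\varpi$ and the sup-bound of the integrand, and not on $\gamma$ (as long as $\|\gamma-\gamma^\di\|<\delta_0$) nor on the solution, this yields \emph{uniform} weak persistence. To upgrade to the self-referential robust bound in~(A), I would argue by contradiction: if no such $\tilde\delta$ existed, then taking $\tilde\delta=1/n$ produces $\gamma_n\to\gamma^\di$ and solutions $\mu_n$ with $\mu_n(0)(Q^\di)>0$ and $\liminf_t\mu_n(t)(Q^\di)<1/n$. The set $\Gamma=\{\gamma^\di\}\cup\{\gamma_n\}$ is compact (a convergent sequence together with its limit), so by Theorem~\ref{re:compact-attr-Cart} the product semiflow $\Phi$ on $\cM_{w+}\times\Gamma$ has a compact attractor; it is uniformly weakly $\rho$-persistent for the lower semicontinuous functional $\rho(u,\gamma)=u(Q^\di)$ (Lemma~\ref{re:semicont}(a)) by the estimate just proved. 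Then \cite[Thm.4.13]{SmTh} gives a \emph{uniform} strong persistence bound $\epsilon_\ast>0$ on $\Gamma$, contradicting $\liminf_t\mu_n(t)(Q^\di)<1/n\to0$.

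For~(B), fix $\gamma$ with $\|\gamma-\gamma^\di\|<\tilde\delta$. By~(A) the semiflow $\phi(\cdot,\gamma)$ is uniformly $\rho$-persistent with $\rho=\nu(Q^\di)$ and, by Corollary~\ref{re:compact-glob-attr}, has a compact global attractor $A_\gamma$; the persistence attractor $\tilde A_\gamma$ is then produced by the persistence-attractor theorem of \cite{SmTh} as the attractor of bounded subsets of the invariant open set $\{\nu:\nu(Q^\di)>0\}$, with the stated compactness, invariance, stability, and attraction properties, and with $\nu(Q^\di)\ge\tilde\delta>0$ on $\tilde A_\gamma$. For~(C), upper semicontinuity at $\gamma^\di$ is obtained by the same scheme as in Corollary~\ref{re:compact-glob-attr}: given $\gamma_n\to\gamma^\di$ and $\nu_n\in\tilde A_{\gamma_n}$, the uniform bound $\tilde A_\gamma\subseteq A_\gamma\subseteq C$ lets me extract $\nu_n\to\nu_\ast$, and, passing to total trajectories inside the compact product attractor over $\Gamma=\{\gamma^\di\}\cup\{\gamma_n\}$ (Theorem~\ref{re:compact-attr-Cart}) on which $\rho\ge\tilde\delta$ is preserved, the limit $\nu_\ast$ lies on a total $\phi(\cdot,\gamma^\di)$-trajectory with $\nu_\ast(Q^\di)\ge\tilde\delta>0$, hence $\nu_\ast\in\tilde A_{\gamma^\di}$; the abstract form of this argument is \cite[Prop.2.7]{Mag09}.

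The main obstacle is the first step of part~(A). Since $k_\diamond$ may be zero, I cannot persist the whole population and then localize to $Q^\di$; instead I must show that \emph{whenever} the mass on the fittest set is small, the suboptimal strategies in $\tilde Q$ pull the total population strictly below $K^\di-\epsilon_0$. This is exactly where both the spectral-gap hypothesis $\sup K(\tilde Q)+3\epsilon_0<K^\di$ and the optimum-preserving property (robustified to~\eqref{eq:pers-rob2}) are indispensable, and the delicate bookkeeping is to make the two differential inequalities interlock through a single $\epsilon$ that is uniform in both the kernel $\gamma$ and the solution.
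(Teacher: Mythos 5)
Your proposal is correct and arrives at the same conclusion, but the route to uniform weak persistence in part (A) is genuinely different from the paper's. The paper works on the product space $\cM_{w+}\times \Gamma$, isolates the boundary set $X_0=\{(u,\gamma):\phi(t,u,\gamma)(Q^\di)=0 \text{ for all } t\}$, identifies its attractor $\cA_0$ with the attractor of the system restricted to $\tilde Q=Q\setminus Q^\di$ (so that $u(\tilde Q)\le \tilde K$ on $\cA_0$), proves that $\cA_0$ is uniformly weakly $\rho$-repelling, and only then obtains uniform weak $\rho$-persistence from the acyclicity theorem \cite[Thm.8.20]{SmTh}. You instead prove uniform weak persistence directly: the differential inequality for $\bar\mu$ (bounded integrand against the small mass on $Q^\di$, together with $\cR\le\xi<1$ on $\tilde Q$ whenever $\bar\mu\ge K^\di-\epsilon_0$, which \eqref{eq:pers-rob1} guarantees) forces $\bar\mu(t)<K^\di-\epsilon_0$ eventually, after which \eqref{eq:pers-rob2} yields exponential growth of $\mu(t)(Q^\di)$ and the contradiction. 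The analytic core --- the interlocking of \eqref{eq:pers-rob1} and \eqref{eq:pers-rob2} --- is the same in both proofs; the difference is that the paper bounds $\mu(t)(\tilde Q)$ by $\tilde K+\epsilon_0$ via proximity to $\cA_0$, whereas you get the (sufficient) bound $\bar\mu<K^\di-\epsilon_0$ from the scalar equation \eqref{TOTPOPBOUND} for the total population. Your version is more elementary and sidesteps the boundary-attractor/acyclicity machinery entirely; the paper's version fits the standard persistence framework and would carry over to settings with more complicated boundary dynamics. From uniform weak persistence onward the two arguments coincide: contradiction sequence $\gamma_n\to\gamma^\di$, compact $\Gamma$, compact attractor from Theorem \ref{re:compact-attr-Cart}, upgrade via \cite[Thm.4.13]{SmTh}, persistence attractor via \cite[Thm.5.6]{SmTh} for (B), and Magal's upper semicontinuity result for (C). One cosmetic remark: if $\tilde Q=\emptyset$ your step 1 has no negative $\tilde Q$-term to exploit, but then $\bar\mu=\mu(\cdot)(Q^\di)<\epsilon<K^\di-\epsilon_0$ holds trivially under the contradiction hypothesis, so nothing breaks.
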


Here $d (\nu, A) = \inf \{ p(\nu - u); u \in A\}$ is the distance from the
point $\nu \in \cM_{w+}$ to the set $A \subseteq \cM_{w+}$ where $p$ is the norm
defined in (\ref{eq:norm-weak}).

Property (A) makes the semiflow $\Phi$  robustly persistent at
an optimum preserving mutation kernel $\gamma^\di$
as $\breve \epsilon >0$ can be chosen uniformly for all $\gamma$ in
a neighborhood of $\gamma^\di$
{(see \cite{HoSc, Sal} and the references therein).}

\begin{proof}
Suppose that (A) is false. Then there exist sequences
 $(\gamma_n)$ in $C(Q, \cP_w)$ and $(u_n) $ in $\cM_{w+}$ such that
 $\delta_0 \ge \|\gamma_n -\gamma^\di\| \to 0$,
$u_n(Q^\di) > 0$
and
\[
\liminf_{t \to \infty} \phi(t, u_n, \gamma_n)(Q^\di) \to 0, \qquad n \to \infty.
\]
Here $\delta_0 >0$ is from (\ref{eq:pers-rob2}).

 The set $\Gamma = \{\gamma_n; n \in \N\}
\cup \{\gamma^\di\}$ is compact in {$C(Q, \cP_w)$}.

Let $X = \cM_{w+} \times \Gamma$ with the metric  $\tilde D( (u, \gamma), (v, \eta))
= p (u-v) + \|\gamma- \eta\|$ for $u,v \in \cM_{w+}$ and $\gamma,\eta \in \Gamma$.
We apply \cite[Thm.8.20]{SmTh} to the semiflow $\Phi$ on $X$ given by
$\Phi(t,(u,\gamma)) = (\phi(t,u,\gamma), \gamma)$ and the persistence function
$\rho(u, \gamma) = u (Q^\di)$.

By Theorem \ref{re:compact-attr-Cart}, $\Phi$ has a compact attractor $\cA$ of bounded
sets on $\cM_{w+} \times \Gamma$.
Let
\begin{equation}
\label{eq:Xzero}
\begin{split}
X_0 := &  \{(u, \gamma) \in X; \forall t \ge 0: \rho (\Phi(t, (u, \gamma) ) =0  \}
\\= &
\{ (u, \gamma)\in X; \forall t \ge 0: \phi(t, u, \gamma)(Q^\di ) =0  \}.
\end{split}
\end{equation}
By \cite[Thm.5.21]{SmTh},  $\cA_0 = X_0 \cap \cA$ is a compact attractor of
compact sets in $X_0$ and attracts all subsets of $X_0$ that are attracted by
$\cA$. So $\cA_0$ attracts all bounded subsets of $X_0$ and  is isolated in $X_0$
by \cite[Thm.2.19]{SmTh}.
  By \cite[Thm.1.40]{SmTh}, $\cA_0$ is  acyclic in $X_0$
and, by \cite[Thm.2.17]{SmTh}, contains the $\omega$-limit sets of all points in $X_0$ under $\Phi$.

Next we show that $\cA_0$ is  uniformly weakly $\rho$-repelling for
$\rho(u,\gamma) = u(Q^\di)$.

Let $(u,\gamma) \in X_0$ and $\mu (t) = \phi(t, u, \gamma)$.
{Set $\tilde Q = Q \setminus Q^\di$.} Then
\[
\mu(t) (E) = \int_{\tilde Q} B\big ( \mu(t) (\tilde Q) , q \big ) \gamma(q)(E) \mu(t)(dq)
-
\int_E D \big (\mu(t)(\tilde Q),q \big ) \mu(t)(dq),
\qquad E \subseteq \tilde Q.
\]
So the restriction of $\Phi$ to $X_0$ corresponds to solutions
of (\ref{M}) with $Q$ being replaced by its compact subset $\tilde Q = Q \setminus
Q^\di$. By Theorem \ref{re:compact-attr-Cart},
\begin{equation}
\label{eq:Azero}
\cA_0 \subseteq \{(u, \gamma); \gamma \in \Gamma, u (\tilde Q) \le \tilde K\},
\qquad
\tilde K
:= \sup K(\tilde Q).
\end{equation}

Recall (\ref{eq:pers-rob1}) and (\ref{eq:pers-rob2}).

Suppose that $\cA_0$ is not uniformly weakly $\rho$-repelling.
If { $\epsilon_0>0$ is as in (\ref{eq:pers-rob1})}, then there exists some
$u \in \cM_{w+}$ and $\gamma \in \Gamma$ such that for $\mu = \phi(\cdot, u, \gamma)$
we have
$\mu(0)(Q^\di) > 0$ and $\limsup_{t \to \infty} d( \mu(t), \cA_0) < \epsilon_0.$
{By (\ref{eq:Xzero}) and (\ref{eq:Azero}),}
\[
\limsup_{t \to \infty} \mu(t)(Q^\di) < \epsilon_0
\quad \hbox{ and }
\quad \limsup_{t \to \infty} \mu(t)(\tilde Q) < \tilde K + \epsilon_0.
\]
By (\ref{eq:pers-rob1}) and these results for $\mu$,
for sufficiently large $t > 0$, $\mu(t) (Q) < K^\di - \epsilon$.
Also $\mu(t) (Q^\di) > 0$ for all $t \ge 0$. {By (\ref{eq:pers-rob2})}, there is some $r > 0$
such that for all $t \ge r >0$,
\[
\begin{split}
\mu'(t) (Q^\di ) \ge & \int_{Q^\di} B(K^\di - \epsilon, q) \gamma (q) (Q^\di)
\mu(t)(dq)
-
\int_{Q^\di} D (K^\di - \epsilon, q) \mu(t) (dq)
\\
= &
\int_{Q^\di} \big [\cR(K^\di - \epsilon, q)\gamma (q) (Q^\di)- 1\big] D(K^\di - \epsilon, q) \mu(t) (dq)
\ge
\breve \delta \mu(t) (Q^\di)
\end{split}
\]
with some $\breve \delta > 0$. So $\mu(t) (Q^\di ) \to \infty$, a contradiction.

This proves that $\cA_0$ is uniformly weakly $\rho$-repelling.
By \cite[Thm.8.20]{SmTh}, {with  $\Omega \subseteq M_1:=\cA_0 $},
$\Phi$ is uniformly weakly $\rho$-persistent on
$X$.
{\cite[Thm.4.13]{SmTh}
implies that $\Phi$ is uniformly $\rho$-persistent on $X$.
This contradicts
$\liminf_{t \to \infty} \phi(t, u_n, \gamma_n)(Q^\di) \to 0$ as $n \to \infty$
because $(u_n,\gamma_n) \in X$ and $\rho(u_n,\gamma_n) > 0$.\\
}

(B) follows from part (A) and \cite[Thm.5.6]{SmTh} applied to each semiflow $\phi(\cdot, \gamma)$
with $\|\gamma - \gamma^\di\| < \tilde \delta$.

(C) follows from \cite[Thm.1.1]{Mag09}.
\end{proof}



\section{Pure Selection Dynamics}
\label{sec:pure-sel}


 When attempting to analyze the asymptotic behavior of an EGT model, one
usually first forms the appropriate notion of an \emph{Evolutionary
Stable Strategy} or \emph{ESS.} The concept of an \emph{ESS} was
introduced into biology from the field of game theory by Maynard
Smith and Price to study the behavior of animal conflicts
\cite{Maynard2}. Intuitively an\emph{ ESS} is a strategy such that
if all members of a population adopt it, no differing behavior could
invade the population under the force of natural selection. So at
the ``equilibrium" of an \emph{ESS} all other strategies, if
present in small quantities, should have negative fitness and die
out.

We use the above discussion to define ESS and ASS as follows.
We define for two strategies $q$ and $\hat
q $ a relative fitness. Then using this definition we
define an \emph{ESS}. To this end define the relative fitness
between two strategies as
$$\lambda_R(q, \hat q) =\mathcal{R}(K(q), \hat q) -1.$$
This is clearly well-defined and is a measure of the long term
fitness of $\hat q$ when the subpopulation with trait $q$ is at its carrying capacity $K(q)$.

\begin{definition} A strategy $q$ is a  \textbf{(local) global \emph{ESS}} if $ q $
satisfies $$\lambda_{R}(q,\hat q) <
\lambda_{R}(q,q), \text{ for all }
\hat q,  \hat q \neq
q,  ~(\text{in a neighborhood of }
q).  $$ In this work an ESS is global unless explicitly mentioned as local.
\end{definition}

Notice that $\lambda_{R}(q,q) =0$
for all $ q $ and hence $q$ is an
(local) \emph{ESS} if and only if $\lambda_{R}(q, \hat
q) <0$ for all $\hat q \neq
q$ (in a neighborhood of $q)$. All
other strategies (in a neighborhood) have negative fitness when the subpopulation with trait $q$ is at its carrying capacity $K(q)$ and hence die out.

 \begin{remark} Assumption (A3) implies that $R(K(q),\hat q) \ne 1$ for any $\hat q \ne q$.
 Recall, that $I(q,\hat q)=R(K(q),\hat q)$ is the invasion reproductive number of strategy
 $\hat q$ with respect to strategy $q$, i.e., it is a measure of the ability of strategy
 $\hat q$ to invade strategy $q$ when the subpopulation with strategy $q$ is at its carrying capacity $K(q)$
 \cite{MM}.
 From the definition of an ESS it is evident that the following are equivalent:
 1) finding an ESS;
 2) finding a strategy $q$ such that the \textbf{relative fitness}
 $\lambda_R(q,\hat q) <0$ for all $\hat q \ne q$;
 3) finding a strategy $q$ such that the \textbf{invasion reproductive number}
 $I(q,\hat q) < 1$ for all  $\hat q \ne q$; 4)  finding a strategy $q$ that has
 the \textbf{largest carrying capacity} $K(q)$.
\end{remark}

The concept of an ESS is insufficient to determine the outcome of
the evolutionary game, since a strategy that is an \emph{ESS} need
not be an evolutionary attractor \cite[ch.6]{BrownVincent}. It is
not the case that all members of the population will end up playing
that strategy. An \emph{ESS} simply implies that if a population
adopts a certain strategy (phenotype, language or cultural norm
etc.), then no mutant small in quantity can invade or replace this
strategy.

\begin{definition} Suppose a population is evolving according to
\eqref{M}. If
$c_{q}\delta_{q}$ for some finite number
$c_{q}$  attracts any solution $\mu(t)$  of \eqref{M} satisfying $q \in supp(\mu(0))$,  then we call the strategy $q$ an
\textbf{Asymptotically Stable Strategy} or
\textbf{\emph{ASS}}. \end{definition}

This strategy, if it exists, is the endgame of the evolutionary
process. As it is attractive and once adopted, it cannot be invaded
or replaced.

Let $\gamma( q) =\delta_{q}  \in C(Q, {\cal P}_w)$ for all $q \in Q$ and $u\in
\mathcal{M}_+.$ Substituting these parameters in \eqref{M} and setting
$\mu(t) = \phi(t, u, \gamma)$ and $\bar \mu(t) = \mu(t)(Q)$,  one
obtains the pure selection model
\begin {equation}
\left\{\begin{array}{ll}
\label{selection}
 \displaystyle {\mu}^\prime (t)(E)
 =
 \int_E \left [B(\bar \mu(t),  q)-D(\bar \mu(t),q)
\right ]\mu(t)(d q) \\
\mu(0)=u.
\end{array}\right.
\end{equation}

{Using the uniqueness of solutions in Theorem \ref{main},
 one observes that the solution to \eqref{selection} satisfies the following integral
 representation :
\begin{equation}
\label{pureintrep}
\mu(t)(E)= \int_{E}\exp \Big (\int_{0}^{t}B(\bar \mu(\tau),q)-D(\bar \mu(\tau),q)d\tau \Big )\mu(0)(dq) .
\end{equation}}

The  following easy consequence will be used without further mentioning.

\begin{lemma}
\label{re:posit-pres}
 If $\mu(0)(E) =0$, then $\mu(t)(E) =0 $ for all $t \ge 0$.
If $\mu(0)(E) >0$, then $\mu(t)(E) >0 $ for all $t \ge 0$.
\end{lemma}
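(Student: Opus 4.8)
The plan is to read off the result directly from the integral representation \eqref{pureintrep}, which already encodes the solution of the pure selection model \eqref{selection}. Since $\gamma(q) = \delta_q$, mass never moves between disjoint strategy sets; each Borel set $E$ evolves independently through a multiplicative exponential factor. The key observation is that the exponential function is strictly positive, so it neither creates mass where none existed nor destroys mass that was present.

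\begin{proof}
By \eqref{pureintrep}, for any Borel set $E \subseteq Q$ we have
\[
\mu(t)(E) = \int_E \exp\Big(\int_0^t B(\bar\mu(\tau),q) - D(\bar\mu(\tau),q)\, d\tau\Big)\,\mu(0)(dq).
\]
The integrand is a strictly positive measurable function of $q$ for each fixed $t \ge 0$, since the exponential of any real number is positive. If $\mu(0)(E) = 0$, then the integral of any function over the null set $E$ (with respect to $\mu(0)$) vanishes, so $\mu(t)(E) = 0$ for all $t \ge 0$. Conversely, if $\mu(0)(E) > 0$, then integrating a strictly positive function against a measure that assigns positive mass to $E$ yields a strictly positive value, so $\mu(t)(E) > 0$ for all $t \ge 0$.
\end{proof}

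The only point requiring a word of care is that the exponent $\int_0^t B(\bar\mu(\tau),q) - D(\bar\mu(\tau),q)\, d\tau$ must be a well-defined finite real number for each $q$, so that its exponential is genuinely positive and bounded away from zero on $E$; this follows because $B$ and $D$ are continuous by (A1)--(A2), $Q$ is compact, and $\bar\mu$ is bounded on $[0,t]$ by Theorem \ref{BS}, so the integrand is uniformly bounded in $q$ over the finite time interval $[0,t]$. I expect no real obstacle here: the integral representation does all the work, and the statement is essentially the observation that the Radon--Nikodym-type density $q \mapsto \exp(\cdots)$ relating $\mu(t)$ to $\mu(0)$ on each Borel set is strictly positive and finite. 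This is exactly why Lemma \ref{re:pos-pres}(a) specializes so cleanly in the pure selection case, where the absence of mutation makes the set-preservation statement symmetric (both ``$=0$ stays $=0$'' and ``$>0$ stays $>0$'').
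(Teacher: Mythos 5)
Your proof is correct and is exactly the argument the paper intends: Lemma \ref{re:posit-pres} is stated as an ``easy consequence'' of the integral representation \eqref{pureintrep}, and reading off the sign of $\mu(t)(E)$ from the strictly positive, finite exponential density is precisely that consequence. Your added remark about the finiteness of the exponent (via continuity of $B$, $D$, compactness of $Q$, and boundedness of $\bar\mu$ on $[0,t]$) is a sensible bit of care that the paper leaves implicit.
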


\begin{theorem}
 \label{re_equilibria-select}
Every $u \in \cM_{w+}$ with $u(Q^\di) = K^\di$ and $u (Q \setminus Q^\di) =0$ is an
equilibrium of (\ref{selection}). In particular, the collection $\cM^\di$
of all such measures is a compact invariant set.
\end{theorem}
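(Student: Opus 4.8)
The plan is to verify that the vector field $F(\cdot, \gamma)$ for the pure selection kernel $\gamma(q) = \delta_q$ vanishes at each such $u$, and then to establish the compactness and invariance of $\cM^\di$ separately.

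For the equilibrium claim, fix $u$ with $u(Q^\di) = K^\di$ and $u(Q \setminus Q^\di) = 0$, so that $\bar u = u(Q) = K^\di$. Since $u$ is carried by $Q^\di$, for every Borel set $E$ the right-hand side of \eqref{selection} evaluated at the constant configuration $\mu \equiv u$ reduces to an integral over $E \cap Q^\di$:
\[
F(u, \gamma)(E) = \int_{E \cap Q^\di} \big[ B(K^\di, q) - D(K^\di, q) \big] \, u(dq).
\]
By \eqref{eq:carry-cap-more}, $\cR(K^\di, q) = 1$ for $q \in Q^\di$, and since $D(K^\di, q) > 0$ by (A2), this forces $B(K^\di, q) = D(K^\di, q)$ there; hence the integrand vanishes and $F(u, \gamma)(E) = 0$ for every $E$. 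Thus the constant function $\mu(t) \equiv u$ solves \eqref{selection} with $\mu(0) = u$, and by the uniqueness assertion in Theorem \ref{main} it is the solution, so $u$ is an equilibrium. (When $K^\di = 0$ the set $\cM^\di$ consists of the zero measure alone, which is trivially an equilibrium.)

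For compactness I would argue in two steps. Every $u \in \cM^\di$ is nonnegative with total mass $u(Q) = K^\di$, so $\cM^\di$ is bounded in total variation; by Lemma \ref{re:measures-weak-top}, in which all bounded closed subsets are compact, it then suffices to show $\cM^\di$ is weak$^*$ closed. Let $u_n \to u$ in the weak$^*$ topology with $u_n \in \cM^\di$. Testing against the constant function $1 \in C(Q)$ gives $u(Q) = \lim_n u_n(Q) = K^\di$. Since $Q \setminus Q^\di$ is open (because $Q^\di$ is compact, hence closed), Lemma \ref{re:semicont}(a) makes $\nu \mapsto \nu(Q \setminus Q^\di)$ lower semicontinuous, whence $u(Q \setminus Q^\di) \le \liminf_n u_n(Q \setminus Q^\di) = 0$, i.e. $u(Q \setminus Q^\di) = 0$ and $u(Q^\di) = K^\di$. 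Therefore $u \in \cM^\di$, so $\cM^\di$ is closed and hence compact. Invariance is then immediate: each element of $\cM^\di$ is an equilibrium, so $\phi(t, u, \gamma) = u \in \cM^\di$ for all $t \ge 0$.

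I do not expect the equilibrium computation to pose any difficulty, as it is a direct consequence of the defining property \eqref{eq:carry-cap-more} of $Q^\di$. The only point requiring care is the weak$^*$ closedness of $\cM^\di$: since this theorem does not assume $Q^\di$ open (in contrast with later sections), one cannot use continuity of $\nu \mapsto \nu(Q^\di)$ and must instead exploit that $Q^\di$ is closed with open complement, so that the relevant set functions are merely semicontinuous. Matching the direction of semicontinuity to the two constraints is exactly what forces the limit measure back into $\cM^\di$.
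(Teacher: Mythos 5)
Your proof is correct and follows essentially the same route as the paper: the same splitting of the integral using $u(Q\setminus Q^\di)=0$ and $\cR(K^\di,q)=1$ on $Q^\di$, and the same closedness argument for $\cM^\di$ (you invoke Lemma \ref{re:semicont}(a) where the paper re-derives the semicontinuity directly via an increasing sequence of continuous functions and monotone convergence, but that lemma is proved by exactly that argument, so the content is identical). Your explicit handling of the $K^\di=0$ case and of compactness via boundedness in total variation plus Alaoglu--Bourbaki are small clarifications the paper leaves implicit.
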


\begin{proof} Let $u$ be as described above. Then $u(D) =0$ for all
Borel subsets
of $Q \setminus Q^\di$ and, for all Borel subsets $E$  of $Q$,
\[
\begin{split}
& \int_E [B(u(Q),q) - D(u(Q),q)] u(dq)
\\
= &
\int_{E \cap Q^\di} [B(K^\di,q) - D(K^\di,q)] u(dq)
+
\int_{E \setminus Q^\di} [B(u(Q),q) - D(u(Q),q)] u(dq)
\\
= &
0 +0 =0.
\end{split}
\]

Any set of equilibria is invariant. To show that $\cM^\di $ is closed, let
$(u_k)$ be a sequence in $\cM^\di$ and $u \in \cM_{w+}$ such that $u_k \to u$
in the weak$^*$ topology. Since $\chi_Q$ is continuous, $u(Q)= \lim_{j\to \infty} u_j(Q)
= K^\di$. Since $Q^\di$ is compact, $Q \setminus Q^\di$ is open and there exists
an increasing sequence $(f_j)$ of continuous functions such that $0 \le f_j \le \chi_{Q \setminus Q^\di}$ and $f_j \to \chi_{Q \setminus Q^\di}$ pointwise \cite[Thm.3.13]{ALI}. Then, for each $ j, k$,  $\int_Q f_j(q) u_k(dq)=0$ and so
$ \int_Q f_j(q) u(dq) =0$. Thus $ \int_Q \chi_{Q \setminus Q^{\diamond}}(q)u(dq) = u(Q \setminus Q^\di)=0$ by the monotone convergence
theorem.
\end{proof}

\begin{proposition}
\label{re:select1}
Assume (A1)-(A3) and $K^\di > 0$. For every $\epsilon \in (0, K^\di)$,
there exist an open set $U_\epsilon$ with $Q^\di \subseteq U_\epsilon \subseteq Q$
and some $\xi_\epsilon > 1$  such that
$\cR( K^\di- \epsilon, q) > \xi_\epsilon $ for all $ q \in U_\epsilon$.
Further,
if $\mu$ is a solution of (\ref{selection}) such that $\mu(0)(U_\epsilon )>0$,
then
\[
\limsup_{t \to \infty} \mu(t)( Q) \in ( K^\di - \epsilon, K^\di].
\]
\end{proposition}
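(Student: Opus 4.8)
The statement splits into a static part (producing $U_\epsilon$ and $\xi_\epsilon$) and a dynamic part (the $\limsup$ bound). For the static part I would first record that under (A1)--(A2), since $D$ is nondecreasing in $s$ and $\inf_q D(0,q)=\varpi>0$, one has $D(s,q)\ge\varpi>0$ everywhere, so $\cR=B/D$ is jointly continuous and nonincreasing in its first argument. Because $\epsilon\in(0,K^\di)$, relation \eqref{eq:carry-cap-more} gives $\cR(K^\di-\epsilon,q)>1$ for every $q\in Q^\di$. As $Q^\di$ is compact and $q\mapsto\cR(K^\di-\epsilon,q)$ is continuous, it attains a minimum $m>1$ on $Q^\di$; I would then fix any $\xi_\epsilon\in(1,m)$ and set $U_\epsilon=\{q\in Q:\cR(K^\di-\epsilon,q)>\xi_\epsilon\}$, which is open by continuity and contains $Q^\di$ by construction.

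For the dynamic part, the upper bound $\limsup_t\bar\mu(t)\le K^\di$ is immediate from \eqref{limsupbound} in Theorem \ref{BS}, so it remains to prove the strict lower bound $\limsup_t\bar\mu(t)>K^\di-\epsilon$. I would argue by contradiction, assuming $\limsup_t\bar\mu(t)\le K^\di-\epsilon$, and then show that the mass $\mu$ places on a suitable compact subset of $U_\epsilon$ grows without bound, contradicting the boundedness $\bar\mu(t)\le\max\{\bar\mu(0),K^\di\}$ from Theorem \ref{BS}.

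To make the growth estimate uniform, I would replace $U_\epsilon$ by a compact subset carrying positive initial mass: writing $U_\epsilon=\bigcup_n V_n$ with $V_n=\{q:\cR(K^\di-\epsilon,q)\ge\xi_\epsilon+1/n\}$ compact and increasing, continuity from below of $\mu(0)$ together with $\mu(0)(U_\epsilon)>0$ yields some compact $V:=V_n$ with $\mu(0)(V)>0$ on which $\cR(K^\di-\epsilon,\cdot)\ge\zeta$ for some $\zeta>1$; by Lemma \ref{re:posit-pres}, $\mu(t)(V)>0$ for all $t$. Using joint continuity and monotonicity of $\cR$, I would choose $\eta_0\in(0,\epsilon)$ small enough that $\cR(K^\di-\epsilon+\eta_0,q)\ge 1+c'$ for all $q\in V$ with some $c'>0$. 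Under the contradiction hypothesis $\bar\mu(t)\le K^\di-\epsilon+\eta_0$ for all large $t$, whence $\cR(\bar\mu(t),q)\ge 1+c'$ on $V$ by monotonicity. Since $D(\bar\mu(t),q)\ge\varpi$, the selection dynamics restricted to $V$ give $\mu'(t)(V)=\int_V[\cR(\bar\mu(t),q)-1]D(\bar\mu(t),q)\mu(t)(dq)\ge c'\varpi\,\mu(t)(V)$ for large $t$, so $\mu(t)(V)$ grows exponentially, contradicting $\mu(t)(V)\le\bar\mu(t)\le\max\{\bar\mu(0),K^\di\}$.

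The main obstacle is precisely this uniformity/boundary issue: because $U_\epsilon$ is cut out by a strict inequality at level $K^\di-\epsilon$, the reproductive margin $\cR(K^\di-\epsilon,\cdot)-1$ degenerates toward the boundary of $U_\epsilon$, while the contradiction hypothesis only forces $\bar\mu(t)$ eventually below $K^\di-\epsilon+\eta$ rather than below $K^\di-\epsilon$ exactly. Passing to the compact sub-level set $V$ with a genuine uniform margin $\zeta>1$, and then absorbing the slack $\eta_0$ via joint continuity of $\cR$, is what makes the exponential-growth estimate rigorous; the same conclusion can alternatively be read off from the integral representation \eqref{pureintrep}, but the differential inequality is cleaner.
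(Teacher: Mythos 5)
Your proof is correct and follows essentially the same route as the paper's: construct $U_\epsilon$ as a region where $\cR(K^\di-\epsilon,\cdot)$ exceeds some $\xi_\epsilon>1$ (you take the superlevel set directly, the paper takes a metric neighborhood of $Q^\di$ and verifies the bound by a sequence argument), then derive a contradiction from exponential growth of the mass on that region via the differential inequality $\mu'(t)(\cdot)\ge c\,\mu(t)(\cdot)$. The one substantive refinement is in the contradiction hypothesis: the paper supposes $\limsup_t\bar\mu(t)<K^\di-\epsilon$, which strictly speaking only yields $\limsup_t\bar\mu(t)\ge K^\di-\epsilon$, whereas your version supposes $\limsup_t\bar\mu(t)\le K^\di-\epsilon$ and absorbs the resulting slack $\eta_0$ by continuity of $\cR$ on a compact set carrying positive initial mass, which is exactly what is needed to obtain the strict inequality $\limsup_t\bar\mu(t)>K^\di-\epsilon$ asserted in the statement. (A minor remark: the "degenerating margin" you worry about is not actually present, since on the superlevel set $U_\epsilon$ one has the uniform bound $\cR(K^\di-\epsilon,q)-1>\xi_\epsilon-1>0$; the only genuine issue is the $\eta_0$ slack, and your compact set $V$ handles it --- working with $\bar U_\epsilon$ directly would have done as well.)
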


\begin{proof}
We know from Theorem \ref{BS} that
\[
\bar\mu^\infty: = \limsup_{t\to \infty} \mu(t)(Q) \le K^\di
\]
for all solutions $\mu $  of (\ref{selection}).

Let $\epsilon \in (0, K^\di)$.
 For all $q \in Q^\di$,
\[
1 = \cR( K^\di, q) < \cR(K^\di -\epsilon, q ).
\]
Since $\cR( K^\di -\epsilon, \cdot)$ is continuous and $Q^\di$ is compact, there
exists some $\xi_\epsilon > 1$ such that $\cR( K^\di -\epsilon, q) > \xi_\epsilon$ for all $q \in Q^\di$.

We claim that there exists some open set $U_\epsilon$ with $Q^\di \subseteq U_\epsilon \subseteq Q  $ such that
\[
\cR( K^\di - \epsilon, q) > \xi_\epsilon ,
\qquad q \in U_\epsilon.
 \]
 Recall that $ U_\delta (Q^\di)=\{ \hat q\in Q; d(\hat q, Q^\di)< \delta \}$
 is an open subset of $Q$ that contains $Q^\di$ for any $\delta > 0$.
So, if our claim does not hold, there exists a sequence $(\hat q_n)$
in $Q$ such that $d(\hat q_n, Q^\di) \to 0$ and $\cR( K^\di -\epsilon, \hat q_n) \le \xi_\epsilon$.
By definition of the distance function, there exists a sequence $(q_n)$
in $Q^\di$ such that $d(q_n, \hat q_n) \to 0$. After choosing
subsequences, $q_n \to q$ for some $q \in Q^\di$ and also $\hat q_n \to q$.
By continuity, $\cR(K^\di - \epsilon, q) \le \xi_\epsilon$, a contradiction.

Now consider a solution $\mu$ of (\ref{selection}) with $\mu(0)(U_\epsilon) > 0$.
So ${\mu(t)( U_\epsilon) }> 0$
for all $t \ge 0$. Suppose  ${\bar\mu^\infty}<  K^\di -\epsilon$. Then there exists some
$r \ge 0$ such that $\bar \mu(t) < K^\di - \epsilon$ for all $t \ge r$. For $t \ge r$,
\[
\begin{split}
\mu'(t)(U_\epsilon) = & \int_{U_\epsilon} [\cR(\bar \mu(t), q) -1] D(\bar \mu(t),q) \mu (t)(dq)
\\
\ge &
\int_{U_\epsilon} [\cR( K^\di -\epsilon, q) -1] D(\bar \mu(t),q) \mu (t)(dq)
\\
\ge &
\int_{U_\epsilon} [\xi -1] D(\bar \mu(t),q) \mu (t)(dq)
\ge
[\xi -1] \inf_{q \in U_\epsilon} D(0,q) \mu (t)(U_\epsilon).
\end{split}
\]
Since $[\xi -1] \inf_{q \in U_\epsilon} D(0,q) >0$, ${\mu(t)(U_\epsilon)}$
increases exponentially and $\bar \mu(t) \ge \mu(t)(U_\epsilon)$ grows unbounded,
a contradiction.
\end{proof}

We add another assumption which states that the strategies that
maximize the carrying capacity are also { superior at all other relevant
population densities.}

\bigskip

\begin{itemize}
\item[{\bf (A4)}] Let $Q^\di$ be defined by (\ref{eq:Q-max}).
\begin{equation}
\hbox{ For each } q^\di \in Q^\di \hbox{ and }  q \in Q \setminus Q^\di, \quad
\cR(X, q^\di) > \cR(X, q) \hbox{ for all } X \in [k_\di, K^\di].
\end{equation}
\end{itemize}

\begin{proposition}
Let $K^\di >0$.
 Assume that $\cR(X, q) = L(X) M(q)$ for all $X \ge 0$
and $q \in Q$ with
continuous  functions $L: \R_+ \to (0,\infty)$ and $M: Q \to \R_+$.
Define $M^\circ = \max_{q \in Q} M(q)$ and $Q^\circ = \{q \in Q;
M(q) = M^\circ\}$. Then $Q^\circ = Q^\di$ and
(A4) follows.
\end{proposition}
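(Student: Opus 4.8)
The plan is to reduce the proposition to two elementary properties of the scalar factor $L$: that it is strictly positive and nonincreasing. First I would record these. Positivity is assumed ($L$ maps into $(0,\infty)$). For monotonicity, note that by (A1)--(A2) the numerator $B(\cdot,q)$ is nonnegative and nonincreasing while the denominator $D(\cdot,q) \ge \varpi > 0$ is nondecreasing, so $\cR(\cdot,q) = B(\cdot,q)/D(\cdot,q)$ is nonincreasing for every fixed $q$. Choosing any $q$ with $M(q) > 0$ and dividing the identity $\cR(X,q) = L(X)M(q)$ by the constant $M(q)$ shows $L$ is nonincreasing. Such a $q$ exists because $K^\di > 0$: picking $q_0 \in Q^\di$ we have $K(q_0) = K^\di > 0$, which by (A3) forces $\cR_0(q_0) \ge 1$ and hence $M(q_0) = \cR_0(q_0)/L(0) > 0$; in particular $M^\circ > 0$.

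Next I would show $M$ is constant on $Q^\di$ and equal to $M^\circ$. For $q \in Q^\di$ we have $\cR(K^\di,q) = 1$ by (A3), so $M(q) = 1/L(K^\di) =: m^\di$, a value independent of $q \in Q^\di$; thus $m^\di \le M^\circ$. To rule out strict inequality, suppose some $q^*$ realizes $M(q^*) = M^\circ > m^\di$. Then $\cR(K^\di, q^*) = L(K^\di)M^\circ > 1$, and since $L(0) \ge L(K^\di)$ also $\cR_0(q^*) = L(0)M^\circ > 1$, so (A3) yields a root $K(q^*)$ with $\cR(K(q^*),q^*) = 1$. Because $\cR(\cdot,q^*)$ is nonincreasing and $\cR(K^\di,q^*) > 1 = \cR(K(q^*),q^*)$, we must have $K(q^*) > K^\di$, contradicting the maximality of $K^\di$. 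Hence $m^\di = M^\circ$ and $Q^\di \subseteq Q^\circ$.

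For the reverse inclusion I would invoke the uniqueness clause in (A3). If $q \in Q^\circ$ then $M(q) = M^\circ = 1/L(K^\di)$, so $\cR(K^\di,q) = 1$; moreover $\cR_0(q) = L(0)M^\circ \ge L(K^\di)M^\circ = 1$, so (A3) applies and the root $K(q)$ is unique, forcing $K(q) = K^\di$ and $q \in Q^\di$. This gives $Q^\circ = Q^\di$. Property (A4) is then immediate: for $q^\di \in Q^\di = Q^\circ$ and $q \in Q \setminus Q^\di = Q \setminus Q^\circ$ we have $M(q) < M^\circ = M(q^\di)$, and multiplying by $L(X) > 0$ yields $\cR(X,q^\di) = L(X)M^\circ > L(X)M(q) = \cR(X,q)$ for every $X \ge 0$, in particular for $X \in [k_\di, K^\di]$.

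The step I expect to require the most care, and the only genuine obstacle, is the bookkeeping around degenerate strategies and the basic reproduction number. A strategy with $M(q) = 0$ has $\cR(\cdot,q) \equiv 0$, hence $\cR_0(q) < 1$ and $K(q) = 0 < K^\di$, so it sits outside both $Q^\di$ and $Q^\circ$ and cannot interfere; one must also confirm at the outset (as above) that $M^\circ > 0$ so that $Q^\circ$ consists of positive-fitness strategies and the divisions by $L$ and by $M(q)$ are legitimate. Once these are in place, the whole argument rests only on $L$ being positive and nonincreasing together with the uniqueness of $K(q)$ in (A3).
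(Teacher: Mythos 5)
Your proof is correct and rests on the same two pillars as the paper's own argument: the identity $L(K(q))\,M(q)=1$ combined with the uniqueness clause of (A3), and the monotonicity of $L$ inherited from (A1)--(A2). The paper packages this as ``$K$ is constant on $Q^\circ$ and strictly smaller off $Q^\circ$'' rather than as the two set inclusions $Q^\di\subseteq Q^\circ$ and $Q^\circ\subseteq Q^\di$, and it leaves implicit several points you spell out (why $L$ is nonincreasing, why $M^\circ>0$, the harmlessness of strategies with $M(q)=0$, and the one-line deduction of (A4) at the end), but the substance is the same.
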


This result is very similar to one for chemostats namely that
maximizing the basic reproduction number amounts to the same as minimizing
the break-even concentration if the reproduction number factorizes
as above \cite{SmTh13}.

\begin{proof}

Since $K^\di > 0$, $1 < \cR(0, q)= L(0) M(q) $ for some $q \in Q$.
This implies that $1 < L(0) M(q) $ for all $q \in Q^\circ$
and so $K(q) > 0$ for all $q \in Q^\circ$.

\begin{quote}
Step 1: $K(\cdot)$ is constant on $Q^\circ$
\end{quote}

Let $q_1, q_2 \in Q^\circ$.
Then $M(q_1) = M^\circ = M(q_2)$.
Recall that
\[
1= \cR(K(q_1), q_1) = L(K(q_1)) M(q_1)=
L(K(q_1)) M(q_2)= \cR(K(q_1),q_2).
\]
 Since, by assumption,  $K(q_2)$ is uniquely determined by
 $\cR(K(q_2), q_2)=1$, $K(q_1) = K(q_2)$.

\begin{quote}
Step 2: If $q^\circ \in Q^\circ$ and $q \in Q \setminus Q^\circ$,
then $K(q^\circ) > K(q)$.
\end{quote}

Suppose that $q^\circ   \in Q^\circ$ and $q \in Q \setminus Q^\circ$.
Then $K(q^\circ) >0$ and we can assume that $K(q) > 0$.

By definition of $K$,
\[
1= \cR(K(q^\circ), q^\circ) = L(K(q^\circ)) M(q^\circ) =
L(K(q^\circ)) M^\circ
\]
and
\[
1 = \cR(K(q), q) = L(K(q)) M(q) < L(K(q))M^\circ.
\]
Then $L(K(q))> L(K(q^\circ))$. Since $L$ is decreasing, $K(q)< K(q^\circ)$.

Step 1 and Step 2 imply that $Q^\circ= Q^\di$.
\end{proof}

The following example in which the birth rate is of Ricker type
and the death rate is constant shows that (A4) is very restrictive and that
without (A4) maximizing the carrying capacity may be different from
maximizing the basic reproduction number. We will learn in the next section
that, without (A4), it is the carrying capacity that is maximized.

\begin{example}
\[
B (x,q) = \kappa_q e^{- \eta_q x} , \qquad D(x,q) = e^{\theta x}.
\]
Then
\[
\cR(x,q) = \kappa_q e^{ -(\eta_q + \theta) x}.
\]
Further
\[
\cR_0(q) = \kappa_q, \qquad K(q) = \frac{\ln \kappa_q}{\eta_q + \theta}.
\]
Now let $Q = \{q_1, q_2, q_3\}.$
We choose $\kappa_{q_1} > \kappa_{q_2} >1\ge \kappa_{q_3}$, but
$\eta_{q_1} $ much larger than $\eta_{q_2} $. Then
strategy $q_1$ has a larger basic reproduction number but
a smaller carrying capacity than strategy $q_2$, $K^\di = K(q_2)$ and $k_\di = K(q_3)=0$.
So
\[
\cR(k_\di, q_2) = \cR(0, q_2) = \kappa_{q_2} < \kappa_{q_1} = \cR(k_\di, q_1),
\]
falsifying (A4).
It is not clear whether this counterexample works if $k_\di > 0$.
\end{example}

In this subsection we will sometimes assume the following:
\begin{itemize}

\item[(A5)] There is a unique strategy with largest carrying capacity,
 i.e.,
 there is a unique $q^\di$ such that $ K (q^\di) = K^{\diamond}.$

\end{itemize}
Under these assumptions, we show that if a population is evolving according to the pure
selection dynamics \eqref{selection}, then a multiple of $\delta_{q^\di}
$   attracts all solutions  $\mu(t)$ that  embrace $q^\di$ as
a possible strategy.

\begin{theorem}
\label{CSS1}  Assume that (A1)-(A5) hold, then $q^\di$ is an $ASS$.
That is, if the population $ \mu(t) $ is evolving according to the
pure selection dynamics \eqref{selection} and $ q^\di
 \in supp (\mu(0))$, then
$$\mu{(t)} \to K^\di \delta_{q^\di}, \quad t \to \infty.
$$ in the weak$^*$ topology.
\end{theorem}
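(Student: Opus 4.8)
The plan is to show that under (A5) there is a single fittest strategy $q^\di$, and that all the mass of $\mu(t)$ eventually concentrates there. I would first use Theorem~\ref{BS} to control the total population: $\limsup_{t\to\infty}\bar\mu(t)\le K^\di$, and in fact Proposition~\ref{re:select1} (applied with shrinking $\epsilon$ and the open sets $U_\epsilon$ containing $q^\di$, using $q^\di\in\mathrm{supp}(\mu(0))$ so that $\mu(0)(U_\epsilon)>0$) gives the sharper lower bound $\limsup_{t\to\infty}\bar\mu(t)\ge K^\di-\epsilon$ for every $\epsilon$, hence $\bar\mu^\infty:=\limsup_{t\to\infty}\bar\mu(t)=K^\di$. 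The core of the argument is then to show that every strategy $q\ne q^\di$ loses out competitively. The natural tool is the integral representation \eqref{pureintrep}: for a Borel set $E$,
\[
\mu(t)(E)=\int_E \exp\Big(\int_0^t [B(\bar\mu(\tau),q)-D(\bar\mu(\tau),q)]\,d\tau\Big)\,\mu(0)(dq).
\]
I would compare the exponential growth factor at $q^\di$ with that at any $q$ bounded away from $q^\di$, and exploit (A4) to conclude the competitive exclusion.

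The key competitive-exclusion step would run as follows. Fix a closed set $C\subseteq Q\setminus\{q^\di\}$ (e.g. $C=Q\setminus U_\epsilon$ for an open neighborhood $U_\epsilon$ of $q^\di$). I want to show $\mu(t)(C)\to 0$. Consider the ratio of the relative growth rates: for $q\in C$, by (A4) and compactness there is a margin in the reproduction numbers $\cR(X,q^\di)>\cR(X,q)$ uniformly over $X$ in the relevant range $[k_\di,K^\di]$. Because $D=B/\cR$ and $B,D\ge 0$, the difference $[B(\bar\mu(\tau),q^\di)-D(\bar\mu(\tau),q^\di)]-[B(\bar\mu(\tau),q)-D(\bar\mu(\tau),q)]$ is, for large $\tau$ (where $\bar\mu(\tau)$ is near $K^\di$), bounded below by a positive constant times $D(\bar\mu(\tau),q)$, hence bounded below by a positive constant $c_C>0$ uniformly in $q\in C$. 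The subtle point is that I compare not with $q^\di$ pointwise but control the total $\bar\mu$: since $\bar\mu(t)\le K^\di$ always, each $q\in C$ with $K(q)<K^\di$ satisfies $\cR(\bar\mu(t),q)<1$ once $\bar\mu(t)$ exceeds $K(q)$, forcing the exponent $\int_0^t[B(\bar\mu,q)-D(\bar\mu,q)]\,d\tau\to-\infty$. More carefully, because $\bar\mu^\infty=K^\di$ and $K(q)\le\tilde K<K^\di$ for $q\in C$, for large $\tau$ we have $\cR(\bar\mu(\tau),q)\le\xi<1$ uniformly on $C$, so the exponential factor decays at a uniform exponential rate and $\mu(t)(C)\to 0$ by dominated convergence.

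Having shown $\mu(t)(Q\setminus U_\epsilon)\to 0$ for every neighborhood $U_\epsilon$ of $q^\di$, together with $\bar\mu(t)\to K^\di$, I would conclude weak$^*$ convergence $\mu(t)\to K^\di\delta_{q^\di}$: for any $f\in C(Q)$, split $\int_Q f\,d\mu(t)$ into the integral over $U_\epsilon$ and over its complement; the latter is bounded by $\|f\|_\infty\,\mu(t)(Q\setminus U_\epsilon)\to 0$, while on $U_\epsilon$ one uses continuity of $f$ at $q^\di$ and $\mu(t)(U_\epsilon)\to K^\di$ to make $\int_{U_\epsilon}f\,d\mu(t)$ close to $K^\di f(q^\di)$. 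Letting $\epsilon\to 0$ gives $\int_Q f\,d\mu(t)\to K^\di f(q^\di)=\int_Q f\,d(K^\di\delta_{q^\di})$, which is exactly weak$^*$ convergence.

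\medskip
\noindent\textbf{Main obstacle.} The delicate step is establishing the \emph{uniform} exponential decay of $\mu(t)(Q\setminus U_\epsilon)$. The growth exponent depends on the time-dependent trajectory $\bar\mu(\tau)$, which only converges to $K^\di$ in a $\limsup$ sense and may fluctuate below $K^\di$; I need to ensure that on the complement of $U_\epsilon$ the net growth rate is eventually uniformly negative despite these fluctuations. This is where (A4) is essential: it guarantees $q^\di$ dominates at \emph{all} densities in $[k_\di,K^\di]$, not merely at $K^\di$, so the sign of $\cR(\bar\mu(\tau),q)-1$ can be controlled along the whole trajectory rather than only in the limit. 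Reconciling the $\limsup$-type control of $\bar\mu$ from Theorem~\ref{BS} with the need for a pointwise-in-$\tau$ estimate in the exponent is the part that requires the most care.
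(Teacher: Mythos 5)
There is a genuine gap in the competitive-exclusion step, and it is precisely the point your ``main obstacle'' paragraph gestures at but does not resolve. Your argument needs $\cR(\bar\mu(\tau),q)\le\xi<1$ uniformly on $C$ \emph{for all large $\tau$}, i.e.\ $\liminf_{t\to\infty}\bar\mu(t)>\tilde K=\sup_{q\in C}K(q)$. But at that stage you only have $\limsup_{t\to\infty}\bar\mu(t)=K^\di$ from Proposition~\ref{re:select1}; Theorem~\ref{BS} only gives $\liminf\bar\mu\ge\min\{k_\di,\bar\mu(0)\}$, which may well lie below $\tilde K$, so $\bar\mu(\tau)$ can dip repeatedly into the region where $\cR(\bar\mu(\tau),q)>1$ for $q\in C$ and the exponent in \eqref{pureintrep} grows during those excursions. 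In the paper the statement $\liminf\bar\mu=K^\di$ (Proposition~\ref{re:converge-total} and Corollary~\ref{corollary}) is a \emph{consequence} of the extinction result, not an input to it, so invoking it here is circular. Your proposed fix via (A4) does not close the gap: (A4) only orders the reproduction numbers $\cR(X,q^\di)>\cR(X,q)$; it says nothing about the sign of $\cR(\bar\mu(\tau),q)-1$ along the trajectory. Moreover, your claimed lower bound on the difference of per-capita growth rates is false in general: since $B-D=(\cR-1)D$ and $D$ depends on $q$, the quantity $(\cR(X,q^\di)-1)D(X,q^\di)-(\cR(X,q)-1)D(X,q)$ need not be positive even when $\cR(X,q^\di)>\cR(X,q)$ (take $D(X,q)$ much larger than $D(X,q^\di)$).

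The missing idea, which is the heart of the paper's Proposition~\ref{extinction}, is to abandon absolute decay of $y(t)=\mu(t)(U_\delta(\breve q))$ and instead prove decay of the \emph{ratio} $z=y^{\xi}x^{-1}$, where $x(t)=\mu(t)(V_\delta)$ is the mass near $Q^\di$ (positive for all $t$ since $q^\di\in\mathrm{supp}(\mu(0))$) and $\xi>0$ is a small exponent. Differentiating logarithmically, $z'/z\le\xi D(\bar\mu,\hat q)(\cR(\bar\mu,\hat q)-1)-D(\bar\mu,q^\di)(\cR(\bar\mu,q^\di)-1)$, which after regrouping is controlled by the (A4)-margin $\cR(\bar\mu,\hat q)-\cR(\bar\mu,q^\di)<-\eta$ on $[\bar k,\bar K]$, with the exponent $\xi$ chosen small enough to absorb the mismatch between $D(\cdot,\hat q)$ and $D(\cdot,q^\di)$. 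This yields a uniformly negative bound on $z'/z$ regardless of where $\bar\mu(\tau)$ sits in $[\bar k,\bar K]$, so $z\to0$ exponentially, and since $x$ is bounded (Theorem~\ref{BS}), $y\to0$. Your reduction of the theorem to ``mass escapes every closed set avoiding $q^\di$, plus $\bar\mu(t)\to K^\di$, plus the standard $f$-splitting argument'' matches the paper's architecture (Propositions~\ref{extinction}, \ref{re:converge-total}, \ref{re:converge-prep} specialized to $Q^\di=\{q^\di\}$), but without the quotient-function device the extinction step as you have written it does not go through. Also note the minor slip that $\bar\mu(t)\le K^\di$ ``always'' holds only when $\bar\mu(0)\le K^\di$; in general the bound is $\max\{\bar\mu(0),K^\di\}$.
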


Two technical propositions are required. We first show  that strategies different from the optimal  strategies are
not adopted in the long run.

\begin{proposition}
\label{extinction} Assume (A1)-(A4). Let $U_0$ be an open set such
that $Q^\di \subseteq U_0 \subseteq Q$. Then there exists some
open set $U$ with $Q^\di \subseteq U \subseteq Q$
such that  $\mu (t, Q \setminus U_0) \to 0$ as $t \to \infty$ for all solutions
$
\mu$ with $\mu(0)(U) > 0$.
\end{proposition}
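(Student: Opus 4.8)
The plan is to reduce the assertion to a single lower bound on the total mass and then close with a one-line comparison. Since $Q^\di\subseteq U_0$, the set $Q\setminus U_0$ is a compact subset of $Q\setminus Q^\di$, so continuity of $K(\cdot)$ gives $\tilde K:=\max_{q\in Q\setminus U_0}K(q)<K^\di$. Evaluating (A4) at $X=K^\di$ and using $\cR(K^\di,q^\di)=1$ on $Q^\di$ yields $\cR(K^\di,q)<1$ for every $q\in Q\setminus U_0$; by compactness and joint continuity of $\cR$, together with the monotonicity of $\cR(\cdot,q)$, I can fix $\eta>0$ and $\epsilon\in(0,K^\di-\tilde K)$ so that $\cR(X,q)\le 1-\eta$ for all $q\in Q\setminus U_0$ and all $X\ge K^\di-\epsilon$. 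I then take $U:=U_\epsilon$ from Proposition \ref{re:select1}; by Lemma \ref{re:posit-pres}, $\mu(0)(U)>0$ forces $\mu(t)(U)>0$ for every $t\ge 0$.

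Granting the key estimate $\displaystyle\liminf_{t\to\infty}\bar\mu(t)\ge K^\di-\epsilon$, the conclusion follows at once: for all large $t$ one has $\bar\mu(t)\ge K^\di-\epsilon$, hence $\cR(\bar\mu(t),q)-1\le-\eta$ on $Q\setminus U_0$, while $D(\bar\mu(t),q)\ge\varpi$ by (A2), so
\[
\frac{d}{dt}\,\mu(t)(Q\setminus U_0)=\int_{Q\setminus U_0}\bigl[\cR(\bar\mu(t),q)-1\bigr]D(\bar\mu(t),q)\,\mu(t)(dq)\le-\eta\varpi\,\mu(t)(Q\setminus U_0).
\]
Gronwall's inequality then gives $\mu(t)(Q\setminus U_0)\to0$; equivalently, the exponent in the representation \eqref{pureintrep} tends to $-\infty$ uniformly in $q\in Q\setminus U_0$.

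The heart of the argument — and the step I expect to be the main obstacle — is therefore the total-mass bound $\liminf_{t\to\infty}\bar\mu(t)\ge K^\di-\epsilon$. The driving mechanism is competitive: writing $a(t)=\mu(t)(U)$, the estimate inside the proof of Proposition \ref{re:select1} shows $a'(t)\ge(\xi_\epsilon-1)\varpi\,a(t)$ on every interval on which $\bar\mu(t)\le K^\di-\epsilon$, whereas $a(t)\le\bar\mu(t)\le\max\{\bar\mu(0),K^\di\}$ stays bounded by Theorem \ref{BS}; thus the fittest subpopulation grows at a fixed exponential rate whenever the total is sub-critical, which forbids $\bar\mu$ from lingering below $K^\di-\epsilon$. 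I would make this rigorous through the compact attractor of Corollary \ref{re:compact-glob-attr}: passing to the nonempty compact invariant $\omega$-limit set of $\mu$, on which every element has total mass at most $K^\di$ and on which $\nu\mapsto\nu(Q\setminus U_0)$ is upper semicontinuous (Lemma \ref{re:semicont}), and arguing along total trajectories that boundedness of $a$ rules out $\bar\nu<K^\di-\epsilon$ on intervals of unbounded length, with the fluctuation lemma \cite{Thi03} supplying the limiting times. The genuine difficulty is that (A4) only orders the reproduction numbers $\cR(X,\cdot)$ and not the per-capita growth rates $[\cR(X,\cdot)-1]D(X,\cdot)$: at low density a strategy in $Q\setminus U_0$ with small carrying capacity but large turnover can transiently out-grow the optimal class, so the lower bound on $\liminf\bar\mu$ must be wrung out of the single-regulating-variable (competitive-exclusion) structure rather than from any pointwise fitness comparison.
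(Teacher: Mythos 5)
There is a genuine gap, and it sits exactly where you locate it. Your argument is conditional on the estimate $\liminf_{t\to\infty}\bar\mu(t)\ge K^\di-\epsilon$, which you do not prove. The mechanism you offer for it --- $a(t)=\mu(t)(U_\epsilon)$ grows exponentially whenever $\bar\mu(t)\le K^\di-\epsilon$, while $a$ stays bounded --- only shows that $\bar\mu$ cannot remain below $K^\di-\epsilon$ on an unbounded terminal interval, i.e.\ it gives $\limsup_{t\to\infty}\bar\mu(t)\ge K^\di-\epsilon$, which is Proposition \ref{re:select1} and is already available. It does not exclude oscillation of $\bar\mu$ across the level $K^\di-\epsilon$, because once $\bar\mu>K^\di-\epsilon$ the differential inequality for $a$ gives no information (points of $U_\epsilon\setminus Q^\di$ may then have reproduction number below one), so $a$ need not be monotone and the dips can recur. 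In the paper the passage from the $\limsup$ bound to the $\liminf$ bound is Proposition \ref{re:converge-total}, and its proof \emph{uses} Proposition \ref{extinction} in an essential way (one needs $\mu(t)(Q\setminus\tilde U_\epsilon)\to 0$ before the fluctuation argument on $\mu(t)(\tilde U_\epsilon)$ identifies $\liminf\bar\mu$). So as a route to Proposition \ref{extinction} your plan is circular relative to the paper's logical order, and your proposed repair via the compact attractor and total trajectories is only a sketch with its own obstacles (e.g.\ nothing so far prevents elements of the $\omega$-limit set from assigning zero mass to $U_\epsilon$, which would break the backward-in-time growth argument).

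The paper avoids needing any a~priori lower bound on the total mass. It localizes: for each $\breve q\in Q\setminus Q^\di$ it sets $y(t)=\mu(t)(U_\delta(\breve q))$, $x(t)=\mu(t)(U_\delta(Q^\di))$ and shows that $z=y^\xi x^{-1}$ decays exponentially, then covers the compact set $Q\setminus U_0$ by finitely many such balls. The hypothesis (A4) is used precisely to get $\cR(\bar\mu,\hat q)-\cR(\bar\mu,q^\di)<-\eta$ for \emph{all} $\bar\mu$ in the invariant window $[\bar k,\bar K]\supseteq[k_\di,K^\di]$ supplied by Theorem \ref{BS}, so the comparison is valid at every density the orbit can visit, not just near $K^\di$. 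The difficulty you correctly identify --- that (A4) orders the $\cR$'s but not the per-capita rates $[\cR-1]D$ --- is exactly what the free exponent $\xi$ is for: choosing $\xi>0$ small makes the coefficient $\xi D(\bar\mu,\hat q)-D(\bar\mu,q^\di)$ nonpositive, after which the $-\eta D(0,\hat q)$ term dominates. If you want to keep your density-based closing step, you must either first prove the extinction claim by such a relative (ratio) argument, or find an independent proof of $\liminf_{t\to\infty}\bar\mu(t)\ge K^\di-\epsilon$; as written, neither is done.
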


\begin{proof} The following statement will provide the assertion of the proposition.

\begin{quote}
\begin{description}
\item[Claim:] If $\check q \in Q \setminus Q^\di$, then there exists some $\delta = \delta(\check q) >0$
such that ${\mu(t)( U_\delta(\check q))} \to 0$ as $ t \to \infty$ for all
solutions $\mu$ with $\mu(0)(U_\delta (Q^\di))>0$.
\end{description}

Here $U_\delta(\check q)$ denotes the $\delta$-neighborhood of $\check q$.
\end{quote}

We first show that this claim implies the assertion of the proposition, indeed.

Set $Q_0 = Q \setminus U_0$. Then $Q_0$ is a compact subset of $Q$ and $Q_0 \cap Q^\di
= \emptyset$. There is a finite subset $\check Q$ of $Q_0$ such that $Q_0$ is contained
in the union of finitely many open sets  $V_q = U_{\delta}(  q) $, $ q \in Q_0$,
where $\delta=\delta_q$ has been chosen according to the claim. Let $\epsilon = \min_{q \in Q_0} \delta_q$ and $U = U_\epsilon (Q^\di)$.
Then, for all solutions $\mu$ with $\mu(0)(U) > 0$,
\[
\mu(t)( Q_0) \le \sum_{q \in \check Q} \mu(t)( V_q ) \to 0, \qquad t \to \infty,
\]
because $\check Q$ is finite.

We now turn to proving the claim.

Let $\breve q \in Q \setminus Q^\di $ and $\delta > 0$.
Let

\begin{equation}
U_\delta = U_\delta (\breve q), \qquad V_\delta = U_\delta (Q^\di),
\end{equation}
where $U_\delta (Q^\di)= \{q \in Q; d(x, Q^\di)< \delta\}$ is the $\delta$-neighborhood of $Q^\di$ and $d$ the distance function extending  the metric $d$.

We consider
\begin{equation}
x(t)= \mu(t)( V_\delta), \qquad y(t) = \mu(t)( U_\delta), \qquad t \ge 0.
\end{equation}
Assume that  $x(0) > 0$, and
so $x(t) > 0$ for all $t > 0$. So we can also consider the function
\begin{equation}
z = y^\xi x^{-1},
\end{equation}
where the number $\xi>0$  will be suitably determined.
(A similar function has been considered in \cite{AA1,AMFH}.)

The strategy of our proof is to show that, for sufficiently small $\delta >0$,
$z(t) \to 0$ and so $y(t) \to 0$ because  $x$ is bounded.

We can assume that $y(0) > 0$, otherwise $y$ is identically 0.
Thus $y(t) >0$ for all $t \ge 0$. Notice that
\begin{equation}
\label{eq:quotient-function}
{\frac{z'}{z} = \xi \frac{y'}{y} -  \frac{x'}{x}.}
\end{equation}
Further
\[
y' = \int_{U_\delta} (B(\bar \mu,q) - D(\bar \mu,q)) \mu(dq),
\qquad
x' = \int_{V_\delta} (B(\bar \mu,q) - D(\bar \mu,q)) \mu(dq).
\]

By continuity, there exist $ q^\di \in \bar V_\delta$ and $\hat q \in \bar U_\delta$
such that $B(\bar \mu,q) - D(\bar \mu,q) \ge B(\bar \mu,q^\di ) - D(\bar \mu,q^\di)$ for all $q \in U_\delta$
and $B(\bar \mu,q) - D(\bar \mu,q) \le B(\bar \mu,\hat q ) - D(\bar \mu,\hat q)$ for all $q \in V_\delta$. { (Note $q^\di$ is not necessarily a point in $Q^{\di})$}.
Hence
\[
\begin{split}
y' \le & (B(\bar \mu,\hat q) - D(\bar \mu,\hat q )) y = D(\bar \mu, \hat q) (\cR(\bar \mu, \hat q) -1)y,
\\
x' \ge & (B(\bar \mu, q^\di) - D(\bar \mu, q^\di )) x = D(\bar \mu,  q^\di) (\cR(\bar \mu, q^\di ) -1)x.
\end{split}
\]
We substitute these formulas into (\ref{eq:quotient-function}),
\[
\frac{z'}{z} \le  \xi  D(\bar \mu, \hat q) (\cR(\bar \mu, \hat q) -1)-  D(\bar \mu,  q^\di) (\cR(\bar \mu,  q^\di) -1).
\]
This can be rewritten as
\[
\frac{z'}{z} \le [\xi  D(\bar \mu, \hat q)-  D(\bar \mu,  q^\di)] (\cR(\bar \mu, q^\di) -1)
+  D(\bar \mu, \hat q) [\cR(\bar \mu, \hat q) - \cR(\bar \mu,  q^\di) ].
\]
By continuity, compactness and (A4), one can find some $\eta > 0$ and some $\bar K > K^\di$ and, if $k_\di > 0$,
some $ \bar k \in [0, k_\di)$ such that, for all sufficiently small $\delta > 0$,
\[
\cR(\bar \mu, \hat q) - \cR(\bar \mu,q^\di) < - \eta
\]
whenever $\bar \mu \in [\bar k, \bar K]$ and $q^\di \in \bar V_\delta$
and $\hat q \in \bar U_\delta $.
By Theorem \ref{BS}, for sufficiently large $t > 0$, $\bar \mu \in [\bar k, \bar K]$ and
\[
\frac{z'}{z} \le [\xi  D(\bar \mu, \hat q)-  D(\bar \mu,  q^\di)] (\cR(\bar \mu, q^\di) -1)
- \eta  D(0, \hat q) .
\]
Recall that $D$ is bounded on $[\bar k, \bar K] \times Q$ and bounded away from
zero on $\R_+ \times Q$. So,
by choosing $\xi >0 $ small enough, we obtain that  $\xi  D(\bar \mu, \hat q)-  D(\bar \mu,  q^\di) \le 0$ for all sufficiently large $t$.
Since $\cR(\bar \mu, q) \ge 1$ for all $\bar \mu \in [0, K^\di]$ and $q \in Q^\di$,
for arbitrary $\epsilon > 0$ we can arrange by choosing $\delta >0$
small enough and $\bar K$ close enough to $K^\di$ that
\[
\cR(\bar \mu, q^\di) \ge 1 - \epsilon, \qquad \bar \mu \in [0, \bar K], q^\di \in \bar V_\delta.
\]
So
\[
\frac{z'}{z} \le [\xi  D(\bar \mu, \hat q)-  D(\bar \mu,  q^\di)] \epsilon
- \eta  D(0, \hat q) .
\]
By choosing $\delta >0$ small enough, we can have $\epsilon >0$
small enough to have the right hand side being smaller than a negative constant.
This implies that $z(t)$ decreases exponentially as we wanted to show.
\end{proof}


\begin{proposition}
\label{re:converge-total}
Assume (A1)-(A4). Let $K^\di >0$. Then, for any $\epsilon \in (0,K^\di)$,
there exists an open set $W_\epsilon$ such that  $Q^\di \subseteq W_\epsilon \subseteq Q$
and $\liminf_{t \to \infty} \mu(t)( Q) \ge K^\di -\epsilon$ for all
solutions of (\ref{selection}) with $\mu(0)(W_\epsilon) > 0$.
\end{proposition}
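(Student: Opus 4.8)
The plan is to upgrade the one-sided bound of Proposition~\ref{re:select1}, which only controls the $\limsup$ of the total population, into the desired $\liminf$ bound, by feeding in the extinction estimate of Proposition~\ref{extinction}. First I would fix $\epsilon \in (0,K^\di)$, choose an auxiliary $\epsilon' \in (0,\epsilon)$, and invoke Proposition~\ref{re:select1} with $\epsilon'$ to obtain an open set $U_{\epsilon'}$ with $Q^\di \subseteq U_{\epsilon'} \subseteq Q$ and a constant $\xi_{\epsilon'} > 1$ such that $\cR(K^\di - \epsilon', q) > \xi_{\epsilon'}$ for all $q \in U_{\epsilon'}$. Applying Proposition~\ref{extinction} with $U_0 = U_{\epsilon'}$ produces an open set $U \supseteq Q^\di$ for which $\mu(t)(Q \setminus U_{\epsilon'}) \to 0$ whenever $\mu(0)(U) > 0$; I then set $W_\epsilon = U \cap U_{\epsilon'}$, still an open neighborhood of $Q^\di$, so that $\mu(0)(W_\epsilon) > 0$ forces both $\mu(0)(U) > 0$ and $\mu(0)(U_{\epsilon'}) > 0$.

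Now fix a solution with $\mu(0)(W_\epsilon) > 0$ and write $y(t) = \mu(t)(U_{\epsilon'})$, so that $\bar\mu(t) = y(t) + \mu(t)(Q \setminus U_{\epsilon'})$ with the second term tending to $0$. Proposition~\ref{re:select1} gives weak persistence, $\limsup_{t\to\infty}\bar\mu(t) > K^\di - \epsilon'$, and hence $\limsup y = \limsup \bar\mu > K^\di - \epsilon'$. The key differential inequality is that, because $\cR(\cdot, q)$ is nonincreasing and $D(\cdot,q) \ge D(0,q) \ge \varpi$ by (A2), whenever $\bar\mu(t) \le K^\di - \epsilon'$ one has $\cR(\bar\mu(t),q) \ge \cR(K^\di - \epsilon', q) > \xi_{\epsilon'}$ on $U_{\epsilon'}$, so that
\[
y'(t) = \int_{U_{\epsilon'}} [\cR(\bar\mu(t),q) - 1] D(\bar\mu(t),q)\, \mu(t)(dq) \ge (\xi_{\epsilon'}-1)\varpi\, y(t) \ge 0 .
\]

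The heart of the argument is a trapping/lower-envelope estimate. Given a cushion, pick $\sigma \in (0, \epsilon - \epsilon')$ and $T$ so large that $\mu(t)(Q\setminus U_{\epsilon'}) < \sigma$ for $t \ge T$, and set $\tau = K^\di - \epsilon' - \sigma$. For $t \ge T$, if $y(t) \le \tau$ then $\bar\mu(t) < y(t) + \sigma \le K^\di - \epsilon'$, so by the displayed inequality $y'(t) \ge 0$; thus $y$ is nondecreasing on every subinterval where it stays at or below $\tau$. Since $\limsup y > K^\di - \epsilon' > \tau$, there is some $t_0 \ge T$ with $y(t_0) \ge \tau$, and I claim $y(t) \ge \tau$ for all $t \ge t_0$: otherwise, letting $t_a = \sup\{s \in [t_0,t]: y(s) \ge \tau\}$ gives $y(t_a) = \tau$ with $y < \tau$ on $(t_a,t]$, whence $y$ is nondecreasing there and $y(t) \ge y(t_a) = \tau$, a contradiction. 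Therefore $\liminf \bar\mu \ge \liminf y \ge \tau = K^\di - \epsilon' - \sigma > K^\di - \epsilon$, which is the assertion.

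The main obstacle is precisely the passage from $\limsup$ to $\liminf$: a priori the total population could oscillate, dipping arbitrarily low (even toward $0$) infinitely often. What makes the trapping argument go through is that the extinction estimate (which is where (A4) enters, through Proposition~\ref{extinction}) forces essentially all mass onto the near-optimal set $U_{\epsilon'}$, so that $\bar\mu$ and $y$ become asymptotically indistinguishable; the sign of $y'$ at the threshold $\tau$ is then governed purely by the near-optimal strategies, where $\cR > 1$ below $K^\di$. This self-contained envelope argument simultaneously rules out collapse ($\liminf = 0$) and secures the quantitative bound, so no appeal to the general persistence machinery of the earlier sections is needed.
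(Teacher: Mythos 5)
Your proof is correct, and its skeleton coincides with the paper's: both take $W_\epsilon$ to be the intersection of the neighborhood supplied by Proposition \ref{re:select1} (on which $\cR(K^\di-\epsilon',\cdot)>\xi>1$) with the one supplied by Proposition \ref{extinction} (which sends the mass outside to zero), and both exploit the inequality $\mu'(t)(U) \ge (\xi-1)\,\inf_{q}D(0,q)\,\mu(t)(U)$, valid as long as $\bar\mu(t)\le K^\di-\epsilon'$. The only divergence is the final step, where the $\limsup$ bound is upgraded to a $\liminf$ bound. The paper argues by contradiction via the fluctuation lemma: it extracts times $t_n\to\infty$ at which $\mu(t_n)(\tilde U_\epsilon)$ tends to the limit inferior with vanishing derivative, and the derivative formula then forces a strictly positive quantity to equal zero. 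You instead run a barrier (trapping) argument: once $y=\mu(\cdot)(U_{\epsilon'})$ exceeds the threshold $\tau=K^\di-\epsilon'-\sigma$, it can never drop below it, because $y'\ge 0$ whenever $y\le\tau$ and the exterior mass is below the cushion $\sigma$. Both finishes are elementary and rigorous; yours is self-contained (no appeal to the fluctuation lemma, at the price of the extra bookkeeping with $\epsilon'<\epsilon$ and $\sigma<\epsilon-\epsilon'$), while the paper's is a one-line application of a tool it uses throughout.
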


\begin{proof}
Let $\epsilon  \in (0,K^\di)$. Set $K_\epsilon = K^\di -\epsilon$.
By Proposition \ref{re:select1},  there exist a neighborhood $\tilde U_\epsilon$ of $Q^\di$ and some $\xi_\epsilon
 > 1$ such
that $\cR (s, q) \ge \xi_\epsilon$ for all $s\in [0,K_\epsilon]$ and $q \in \tilde U_\epsilon$.
By Proposition \ref{extinction}, there exists an open set $V_\epsilon$
such that $Q^\di \subseteq V_\epsilon \subseteq Q$ and
 $\mu(t)( Q \setminus \tilde U_\epsilon) \to 0$ as $t \to \infty$ for all solutions $\mu$
 with $\mu(0) (V_\epsilon) > 0$.
By Proposition \ref{re:select1}, $\limsup_{t\to \infty} \mu(t)( \tilde U_\epsilon) > K_\epsilon$
if $\mu(0) (\tilde U_\epsilon) > 0$. Set $W_\epsilon = \tilde U_\epsilon \cap {V_\epsilon}$.
Suppose
\[
\bar \mu_\infty:= \liminf_{t \to \infty} \mu(t)(Q) < K_\epsilon
\quad \hbox{ and }\mu(0)(W_\epsilon ) >0.
\]
Since ${\mu(t)(\tilde  U_\epsilon)}$ does not converge as $t \to \infty$, a version of the fluctuation lemma
\cite{HHG}\cite[Prop.A.20]{Thi03}
provides a sequence $(t_n)$ with $t_n \to \infty$ such that
\[
\mu(t_n)( \tilde U_\epsilon) \to
 \bar \mu_\infty < K_\epsilon
\]
and $(d/dt) \mu(t_n)( \tilde U_\epsilon) =0$.
Then
\[
0=  \mu'(t_n)( \tilde U_\epsilon) = \int_{\tilde U_\epsilon} [\cR(\bar \mu (t_n), q) -1] D(\bar \mu(t_n) ,q) \mu (t_n)( dq).
\]
For sufficiently large $n$, $\bar \mu(t_n) \le K^\di -\epsilon$ and
\[
0  \ge (\xi -1) \inf_{q \in Q} D(0,q) \mu(t_n)( \tilde U_\epsilon)  >0.
\]
This contradiction finishes the proof.
\end{proof}

The next result now follows in combination with Proposition \ref{re:select1}.

\begin{corollary}\label{corollary}
Assume (A1)-(A4). If $\mu$ is a solution of (\ref{selection}) such that
$\mu(0)(U) >0$ for all open sets $U$ with $Q^\di \subseteq U \subseteq Q$,
then $\mu(t)( Q) \to K^\di$ as $t \to \infty$.
\end{corollary}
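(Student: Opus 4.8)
The plan is to sandwich the total population $\bar\mu(t) = \mu(t)(Q)$ between $K^\di$ from above and $K^\di - \epsilon$ from below for every $\epsilon$, and then let $\epsilon \to 0$. The upper bound is already in hand with no extra hypotheses: by Theorem \ref{BS} (equivalently, by the first assertion at the start of Proposition \ref{re:select1}) we have
\[
\limsup_{t\to\infty}\mu(t)(Q) \le K^\di
\]
for every solution of (\ref{selection}).

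For the matching lower bound I would fix an arbitrary $\epsilon \in (0, K^\di)$ and invoke Proposition \ref{re:converge-total}, which supplies an open set $W_\epsilon$ with $Q^\di \subseteq W_\epsilon \subseteq Q$ such that $\liminf_{t\to\infty}\mu(t)(Q) \ge K^\di - \epsilon$ for every solution with $\mu(0)(W_\epsilon) > 0$. The crucial point is that the standing hypothesis of the corollary --- that $\mu(0)(U) > 0$ for \emph{every} open set $U$ with $Q^\di \subseteq U \subseteq Q$ --- applies in particular to $U = W_\epsilon$. Hence $\mu(0)(W_\epsilon) > 0$, and the conclusion of Proposition \ref{re:converge-total} is available for our solution $\mu$.

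Combining the two bounds yields
\[
K^\di - \epsilon \le \liminf_{t\to\infty}\mu(t)(Q) \le \limsup_{t\to\infty}\mu(t)(Q) \le K^\di
\]
for every $\epsilon \in (0, K^\di)$. Letting $\epsilon \to 0$ forces $\liminf = \limsup = K^\di$, so $\mu(t)(Q) \to K^\di$ as $t \to \infty$, which is the assertion.

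I do not expect a genuine obstacle here: all of the analytic work --- the exponential-growth estimate on the favorable neighborhood $U_\epsilon$ and the fluctuation-lemma argument --- has been absorbed into Propositions \ref{re:select1} and \ref{re:converge-total}, so this corollary is purely a limiting (sandwich) argument. The only thing warranting care is the matching of hypotheses: one must verify that the ``positive on every neighborhood of $Q^\di$'' assumption is exactly strong enough to license the application of Proposition \ref{re:converge-total} simultaneously for each $\epsilon$ (since the neighborhood $W_\epsilon$ depends on $\epsilon$), while noting that the upper bound needs no such assumption at all.
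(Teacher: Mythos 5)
Your proof is correct and is precisely the argument the paper intends: the authors give no written proof, only the remark that the corollary ``follows in combination with Proposition \ref{re:select1}'' from Proposition \ref{re:converge-total}, i.e.\ the upper bound $\limsup_{t\to\infty}\mu(t)(Q)\le K^\di$ from Theorem \ref{BS}/Proposition \ref{re:select1} together with the $\epsilon$-dependent lower bound from Proposition \ref{re:converge-total}, exactly as you do. Your observation that the hypothesis ``$\mu(0)(U)>0$ for every open $U\supseteq Q^\di$'' is what licenses applying Proposition \ref{re:converge-total} for each $W_\epsilon$ simultaneously is the right point of care.
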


The following result extends Theorem \ref{CSS1}, since Theorem \ref{CSS1} is an obvious corollary with $Q^\di =\{q^\di \}$.

\bigskip

\noindent
\begin{proposition}
\label{re:converge-prep}
 Assume (A1)-(A4). Let $\mu$ be a solution of (\ref{selection}) such
that $\mu(0)(U) > 0$ for all open sets $U$ with $Q^\di \subseteq U \subseteq Q$.
Then, for all $f \in C(Q)$,
\[
\liminf_{t\to \infty } \int_Q f(q) \mu(t)(dq) \ge K^\di \inf_{Q^\di} f
\]
and
\[
\limsup_{t\to \infty } \int_Q f(q) \mu(t)(dq) \le K^\di \sup_{Q^\di} f.
\]
\end{proposition}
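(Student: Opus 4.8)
The plan is to combine the two facts already established for the pure selection dynamics: that the total mass converges, $\mu(t)(Q) \to K^\di$ (Corollary \ref{corollary}, whose hypothesis is met since $\mu(0)(U) > 0$ for every open $U$ with $Q^\di \subseteq U \subseteq Q$), and that the mass escaping any fixed neighborhood of $Q^\di$ vanishes (Proposition \ref{extinction}). Together these say that $\mu(t)$ concentrates on $Q^\di$ while carrying total mass approaching $K^\di$, and the proposition is then the quantitative statement that testing against a continuous $f$ sees only the values of $f$ on $Q^\di$, weighted by the limiting mass $K^\di$.

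First I would fix $\epsilon > 0$ and use the uniform continuity of $f$ on the compact metric space $Q$ together with the compactness of $Q^\di$ to produce an open neighborhood $U_0 = U_\delta(Q^\di)$, $Q^\di \subseteq U_0 \subseteq Q$, on which
\[
\inf_{Q^\di} f - \epsilon \le f(q) \le \sup_{Q^\di} f + \epsilon, \qquad q \in U_0 .
\]
Indeed, for $\delta$ small enough every $q$ with $d(q, Q^\di) < \delta$ lies within $\delta$ of some $q' \in Q^\di$, so $|f(q) - f(q')| < \epsilon$ by uniform continuity. Next I would invoke Proposition \ref{extinction} with this $U_0$: since $\mu(0)$ charges every open neighborhood of $Q^\di$, it charges the associated set provided by that proposition, giving $\mu(t)(Q \setminus U_0) \to 0$ as $t \to \infty$. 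Combined with $\mu(t)(Q) \to K^\di$, this forces $\mu(t)(U_0) = \mu(t)(Q) - \mu(t)(Q \setminus U_0) \to K^\di$.

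I would then split
\[
\int_Q f \, d\mu(t) = \int_{U_0} f \, d\mu(t) + \int_{Q \setminus U_0} f \, d\mu(t),
\]
bounding the second term in absolute value by $\|f\|_\infty\, \mu(t)(Q \setminus U_0) \to 0$, and the first using the nonnegativity of $\mu(t)$ and the pointwise estimate on $U_0$, e.g. $\int_{U_0} f \, d\mu(t) \le (\sup_{Q^\di} f + \epsilon)\, \mu(t)(U_0)$. Taking $\limsup$ and using $\mu(t)(U_0) \to K^\di$ yields $\limsup_t \int_Q f\, d\mu(t) \le (\sup_{Q^\di} f + \epsilon) K^\di$, and letting $\epsilon \downarrow 0$ gives the upper bound. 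The lower bound for the $\liminf$ is obtained identically with $\inf_{Q^\di} f - \epsilon$ in place of $\sup_{Q^\di} f + \epsilon$.

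I expect no deep obstacle here, since the argument packages two convergence statements; the points requiring care are that $U_0$ must be chosen \emph{before} applying Proposition \ref{extinction}, and that the bounds on $f$ over $U_0$ must be uniform (hence the uniform-continuity/compactness step). I would also note that the sign of the constants $\sup_{Q^\di} f \pm \epsilon$ is irrelevant: because $\mu(t)$ is a nonnegative measure, $f \le c$ on $U_0$ gives $\int_{U_0} f\, d\mu(t) \le c\, \mu(t)(U_0)$ whatever the sign of $c$, and $\mu(t)(U_0) \to K^\di$ makes the resulting $\limsup$ equal to $c K^\di$.
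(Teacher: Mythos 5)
Your argument is correct and follows essentially the same route as the paper's proof: an open neighborhood of $Q^\di$ on which $f$ stays within $\epsilon$ of its extremes over $Q^\di$, Proposition \ref{extinction} to make the mass outside that neighborhood vanish, Corollary \ref{corollary} for the total mass, and then $\epsilon \downarrow 0$. The only cosmetic difference is that the paper takes the neighborhood to be the open sublevel set $\{q;\, f(q) < \sup_{Q^\di} f + \epsilon\}$ rather than a metric $\delta$-neighborhood obtained from uniform continuity, and your explicit remark about the sign of the constant (handled by the convergence of $\mu(t)(U_0)$) is a welcome extra precaution.
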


\bigskip

\begin{proof}
Define
\[
f^\di = \sup_{Q^\di} f, \qquad f_\di = \inf_{Q^\di} f.
\]
Let $\epsilon > 0$ and $U = \{q \in Q; f(q) < f^\di + \epsilon\}$.
Since $f$ is continuous, $U$ is an open subset of $Q$ and $Q^\di \subseteq U \subseteq Q$.
By Proposition \ref{extinction}, ${\mu(t)( Q \setminus U)} \to 0$ as $t \to \infty$.
Now
\[
\begin{split}
& \limsup_{t \to \infty } \int_Q f(q) \mu(t)(dq)
\le
\limsup_{t \to \infty } \Big (\int_U f(q) \mu(t)(dq) + \sup_Q f \; \mu(t)(Q \setminus U) \Big )
\\
\le  &
(f^\di + \epsilon) \limsup_{t \to \infty } \mu(t)( U)
=
(f^\di + \epsilon) \limsup_{t \to \infty } \mu(t)( Q)
=
(f^\di + \epsilon) K^\di.
\end{split}
\]
Since this holds for any $\epsilon >0$, the statement
for the limit superior follows.
The proof for the limit inferior is similar.
\end{proof}

Recall the set of equilibrium measures
\[
\cM^\di = \{ \nu \in \cM_{w+}; \nu(Q) = \nu(Q^\di) = K^\di\}
\textcolor{red}{.}\]

\begin{proposition}
\label{re:converge-prep2}
 Assume (A1)-(A4). Let $\mu$ be a solution of (\ref{selection}) such
that $\mu(0)(U) > 0$ for all open sets $U$ with $Q^\di \subseteq U \subseteq Q$.

For  every sequence $(t_n)$ with $t_n \to \infty$,
there exists a subsequence $(t_{n_j})$ and some $\nu \in \cM^\di$ such that, for all $f \in {C(Q)}$,
\[
\int_Q f(q) \mu(t_{n_j})( dq) \to  \int_{Q^\di} f(q) \nu(dq), \qquad j \to \infty.
\]
\end{proposition}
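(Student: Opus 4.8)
The plan is to extract a weak$^*$ convergent subsequence by compactness and then identify its limit using the two-sided estimate furnished by Proposition~\ref{re:converge-prep}.

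First I would invoke Theorem~\ref{BS}: the bound \eqref{limbound} gives $\mu(t)(Q) \le \max\{\bar\mu(0), K^\di\}$ for all $t \ge 0$, so the family $\{\mu(t_n); n \in \N\}$ lies in a fixed total-variation ball of $\cM_{w+}$. By Lemma~\ref{re:measures-weak-top} such a ball is weak$^*$ compact (Alaoglu--Bourbaki) and metrizable by the norm $p$, hence sequentially compact. I would extract a subsequence $(t_{n_j})$ and some $\nu \in \cM_{w+}$ (the cone is weak$^*$ closed) with $\mu(t_{n_j}) \to \nu$ in the weak$^*$ topology, that is, $\int_Q f\,d\mu(t_{n_j}) \to \int_Q f\,d\nu$ for every $f \in C(Q)$.

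Next I would pin down $\nu$. Since the hypothesis on $\mu(0)$ is exactly that of Proposition~\ref{re:converge-prep}, for every $f \in C(Q)$ we have $K^\di \inf_{Q^\di} f \le \liminf_{t\to\infty}\int_Q f\,d\mu(t)$ and $\limsup_{t\to\infty}\int_Q f\,d\mu(t) \le K^\di \sup_{Q^\di} f$. Any subsequential limit is squeezed between the full $\liminf$ and $\limsup$, so
\[
K^\di \inf_{Q^\di} f \;\le\; \int_Q f\,d\nu \;\le\; K^\di \sup_{Q^\di} f, \qquad f \in C(Q).
\]
Taking $f \equiv 1$ gives $\nu(Q) = K^\di$. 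To obtain $\nu(Q \setminus Q^\di) = 0$ I would test against the continuous nonnegative distance function $h(q) = d(q, Q^\di)$, which vanishes precisely on the compact set $Q^\di$; the upper estimate yields $\int_Q h\,d\nu \le K^\di \sup_{Q^\di} h = 0$, and since $h \ge 0$ with $h > 0$ off $Q^\di$, this forces $\nu(Q \setminus Q^\di) = 0$. Hence $\nu(Q^\di) = \nu(Q) = K^\di$, i.e. $\nu \in \cM^\di$. Because $\nu(Q\setminus Q^\di) = 0$ we may restrict the domain of integration, $\int_Q f\,d\nu = \int_{Q^\di} f\,d\nu$, and the weak$^*$ convergence of the first step becomes exactly the claimed $\int_Q f\,d\mu(t_{n_j}) \to \int_{Q^\di} f\,d\nu$.

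The only genuinely delicate point is the identification of the support of $\nu$. Weak$^*$ convergence only gives $\nu(Q\setminus Q^\di) \le \liminf_j \mu(t_{n_j})(Q\setminus Q^\di)$, and this right-hand side need \emph{not} vanish: the mass can concentrate arbitrarily close to $Q^\di$ without ever sitting on it (indeed $\mu(t)(Q^\di)$ may stay small when $\mu(0)$ has no atom there), so the support statement cannot be read off directly from the measures of sets. Routing through the two-sided bound of Proposition~\ref{re:converge-prep} applied to the single function $h$ is what circumvents this; everything else is routine compactness and bookkeeping.
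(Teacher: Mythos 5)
Your proof is correct. The first half (boundedness via Theorem \ref{BS}, weak$^*$ sequential compactness via Lemma \ref{re:measures-weak-top}, closedness of the positive cone) coincides with the paper's argument. Where you diverge is in identifying the limit $\nu$: the paper takes $f\equiv 1$ and invokes Corollary \ref{corollary} to get $\nu(Q)=K^\di$, and then shows $\nu(Q\setminus Q^\di)=0$ by going back to Proposition \ref{extinction} directly --- for each open $U\supseteq Q^\di$ it builds a Urysohn-type function vanishing on a closed neighborhood $\bar V_\delta$ and equal to $1$ off $U$, concludes $\nu(Q\setminus U)=0$, and then exhausts $Q\setminus Q^\di$ by the sets $Q\setminus V_{1/m}$ using countable additivity. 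You instead route both facts through Proposition \ref{re:converge-prep}, and your single test function $h(q)=d(q,Q^\di)$ (with $\sup_{Q^\di}h=0$, $h\ge 0$, $h>0$ off the compact set $Q^\di$, so $\nu(\{h>1/m\})\le m\int h\,d\nu=0$ for every $m$) collapses the paper's Urysohn-plus-exhaustion step into one line. Since Proposition \ref{re:converge-prep} is itself proved from Proposition \ref{extinction}, the two arguments rest on the same underlying fact; yours is shorter at the cost of leaning on the heavier intermediate result, while the paper's is more self-contained from \ref{extinction}. Your closing remark about why $\nu(Q\setminus Q^\di)\le\liminf_j\mu(t_{n_j})(Q\setminus Q^\di)$ is useless here (mass concentrating near but not on $Q^\di$) correctly identifies the one genuinely delicate point.
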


\begin{proof}
Let $(t_n)$ be a sequence with $t_n \to \infty$.
By Lemma \ref{re:measures-weak-top}, there exists some $\nu \in \cM_+(Q)$
and a subsequence $(t_{n_j})$ such that
\[
\lim_{j \to \infty} \int_Q f(q) \mu(t_{n_j})(dq)
\to \int_Q f(q) \nu (dq), \qquad f \in {C(Q)}.
 \]

For $f \equiv 1$,  $\nu(Q) = \lim_{j \to \infty} \mu(t_{n_j}) (Q) =K^\di$ by Corollary \ref{corollary}. Let $U$ be open  such that $Q^\di \subseteq U \subseteq Q$.
Then there exists some $\delta > 0$ such that $U \supseteq \bar V_\delta$
where $\bar V_\delta$ is the closure of the open set  $V_\delta:=
\{q \in Q ; d(q, Q^\di) < \delta\}$. Now $Q \setminus U$ is compact and
$Q \setminus \bar V_\delta$ is open  and $Q \setminus U \subseteq Q \setminus \bar V_\delta$.
So there exists some $f \in {C_{+}(Q)}$ with values between 0 and 1 such that $f \equiv 1 $ on $Q \setminus U$
and $f \equiv 0$ on $\bar V_\delta$.

By Proposition \ref{extinction},
\[
\nu (Q \setminus U) \le \int_Q f(q) \nu(dq)
=
\lim_{j\to \infty} \int_Q f(q)  \mu(t_{n_j})( dq)
\le
\limsup_{j\to \infty} { \mu(t_{n_j})(Q \setminus V_\delta)} =0.
\]
Now $Q^\di = \bigcap_{m\in \N}  V_{1/m} $ and
$Q \setminus Q^\di = \bigcup_{m\in \N} (Q \setminus  V_{1/m} )$.
Since $\nu$ is countably additive,
$
\nu (Q \setminus Q^\di) = \lim_{m \to \infty } \nu (Q \setminus  V_{1/m} )=0.
$

\end{proof}

The following result also extends Theorem \ref{CSS1}.

\begin{theorem}
\label{re:meas-select}
Assume (A1)-(A4).   Let $\mu$ be a solution of (\ref{selection}) such
that $\mu(0)(U) > 0$ for all open sets $U$ with $Q^\di \subseteq U \subseteq Q$.
Then there exists a function $\mu^\di: \R_+ \to \cM^\di$
with the following properties.

\begin{itemize}

\item[(a)] For all $B \in \cB$, $\mu^\di (\cdot)( B)$ is measurable on $\R_+$;

\item[(b)]
$ \displaystyle \int_Q f(q) \mu(t)(dq)  - \int_{Q^\di} f(q) \mu^\di (t)(dq) \to 0,
\qquad t \to \infty,
$
for all $f \in C(Q)$.

\end{itemize}
\end{theorem}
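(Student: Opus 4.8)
The plan is to show that the whole orbit approaches the compact equilibrium set $\cM^\di$ in the weak$^*$ metric $p$, and then to take $\mu^\di(t)$ to be a measurably chosen nearest point of $\mu(t)$ in $\cM^\di$. Since every $\nu \in \cM^\di$ satisfies $\nu(Q)=\nu(Q^\di)=K^\di$, such a $\nu$ is carried by $Q^\di$, so $\int_{Q^\di} f\,d\mu^\di(t)=\int_Q f\,d\mu^\di(t)$ and assertion (b) is exactly $\int_Q f\,d(\mu(t)-\mu^\di(t))\to 0$. Thus the theorem reduces to producing a Borel measurable $\mu^\di:\R_+\to\cM^\di$ with $p(\mu(t)-\mu^\di(t))=d(\mu(t),\cM^\di)\to 0$.

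First I would establish that $d(\mu(t),\cM^\di)\to 0$ as $t\to\infty$. By Theorem \ref{BS} the orbit $\{\mu(t):t\ge 0\}$ is bounded in total variation, hence lies in a weak$^*$ compact set on which, by Lemma \ref{re:measures-weak-top}, the metric induced by $p$ is topologically equivalent to the weak$^*$ topology; in particular the orbit is precompact in $(\cM_{w+},p)$. Proposition \ref{re:converge-prep2} states that every subsequence of an arbitrary $t_n\to\infty$ has a further subsequence along which $\mu(t_{n_j})\to\nu$ weak$^*$ for some $\nu\in\cM^\di$, i.e. every $\omega$-limit point of the orbit lies in $\cM^\di$. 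If $d(\mu(t),\cM^\di)\not\to 0$, there would be $\eps>0$ and $t_n\to\infty$ with $d(\mu(t_n),\cM^\di)\ge\eps$; but a subsequence then satisfies $p(\mu(t_{n_j})-\nu)\to 0$ for some $\nu\in\cM^\di$ (weak$^*$ convergence in a bounded set is $p$-convergence by Lemma \ref{re:measures-weak-top}), forcing $d(\mu(t_{n_j}),\cM^\di)\to 0$, a contradiction.

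Next I would construct $\mu^\di$. The set $\cM^\di$ is compact by Theorem \ref{re_equilibria-select}, and $t\mapsto\mu(t)$ is continuous into $(\cM_{w+},p)$ by Theorem \ref{main}, so $(t,\nu)\mapsto p(\mu(t)-\nu)$ is continuous and the nearest-point multifunction $t\mapsto\{\nu\in\cM^\di:p(\mu(t)-\nu)=d(\mu(t),\cM^\di)\}$ has nonempty compact values and is upper semicontinuous. A standard measurable selection theorem (Kuratowski--Ryll-Nardzewski) then yields a Borel measurable $\mu^\di:\R_+\to(\cM^\di,p)$ with $p(\mu(t)-\mu^\di(t))=d(\mu(t),\cM^\di)$. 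For (a), I note that $\nu\mapsto\nu(B)$ is Borel measurable on the weak$^*$ space for every $B\in\cB$: it is lower (upper) semicontinuous for open (closed) $B$ by Lemma \ref{re:semicont}, and the collection of $B$ for which it is Borel measurable is a Dynkin system (closed under complements via $\nu(B^c)=\nu(Q)-\nu(B)$, and under disjoint countable unions via countable additivity) containing the open $\pi$-system, hence all of $\cB$. Composing with the measurable $\mu^\di$ gives measurability of $t\mapsto\mu^\di(t)(B)$.

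Finally, for (b) set $\nu_t=\mu(t)-\mu^\di(t)$. Then $p(\nu_t)\to 0$ by the above, and its total variation $\|\nu_t\|_{\mathrm{TV}}$ is bounded uniformly in $t$, since $\mu(t)(Q)$ is bounded by Theorem \ref{BS} and $\mu^\di(t)(Q)=K^\di$. Given $f\in C(Q)$ with $f\neq 0$, put $g=f/\|f\|$ and choose $f_k$ from the dense set $S$ used in (\ref{eq:norm-weak}) with $\|g-f_k\|<\eta$; then
\[
\Big|\int_Q f\,d\nu_t\Big|\le \|f\|\Big(2^{k}\,p(\nu_t)+\eta\,\|\nu_t\|_{\mathrm{TV}}\Big),
\]
where the first term tends to $0$ for fixed $k$ and the second is uniformly small. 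Letting $t\to\infty$ and then $\eta\to 0$ gives $\int_Q f\,d(\mu(t)-\mu^\di(t))\to 0$, which is (b). The one genuinely delicate point is the measurability step — the measurable selection of the nearest point together with the measurability of $\nu\mapsto\nu(B)$ for arbitrary Borel $B$; the limiting behavior itself is a soft consequence of Theorem \ref{BS} and Proposition \ref{re:converge-prep2}.
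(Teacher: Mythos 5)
Your proposal is correct and takes essentially the same route as the paper: the paper also defines $\mu^\di(t)$ as a measurably selected minimizer of $\nu\mapsto p(\mu(t)-\nu)$ over the compact set $\cM^\di$ (citing Brown--Purves rather than Kuratowski--Ryll-Nardzewski) and derives $p(\mu(t)-\mu^\di(t))\to 0$ by the same contradiction argument from Proposition \ref{re:converge-prep2}. Your extra detail on the Dynkin-system argument for (a) and the explicit estimate for (b) merely fills in steps the paper labels ``standard.''
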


\begin{proof}
$\cM^\di$ is compact and sequentially compact with respect to
the weak$^*$ topology and the weak$^*$ topology on $\cM^\di$ can be induced by the norm
$p$
in Lemma \ref{re:measures-weak-top}.
Now let $\mu(\cdot) $ be a solution of the pure selection equation. Define
$g: \R_+ \times \cM^\di\to \R_+$ by
\[
g(t, \nu) = p(\mu(t) - \nu), \qquad t \ge 0, \nu \in \cM^\di.
\]
Then $g$ is continuous on $\R_+ \times \cM^\di$.  Since  $\cM^\di$ is a compact metric space,
it is complete and separable.
By a measurable selection theorem
(see \cite{BrPu}, e.g.), there exists a Borel measurable function
$\mu^\di: \R_+ \to \cM^\di$ such that
\begin{equation}
\label{eq:meas-select}
g(t, \mu^\di(t))= \inf_{\nu \in \cM^\di} g(t, \nu).
\end{equation}
We claim that $g(t, \mu^\di(t)) \to 0$ as $t \to \infty$. Suppose not.
Then there exists some $\epsilon > 0$ and a sequence $(t_n)$ with $t_n \to \infty$
such such $g(t_n, \mu^\di(t_n)) \ge \epsilon$ for all $n \in \N$.
By Proposition \ref{re:converge-prep2}, there exists some $\nu \in \cM^\di $ and a subsequence
$(t_{n_j})$ such that $g(t_{n_j}, \nu)= p(\mu (t_{n_j})- \nu) \to 0$.
By
(\ref{eq:meas-select}), $g(t_{n_j}, \nu (t_{n_j})) \to 0$, a contradiction.
By construction, $p(\mu(t)- \mu^\di(t)) \to 0$ as $t \to \infty$ and so
\[
\int_Q f(q) \mu(t)(dq) -  \int_{Q^\di} f(q) \mu^\di(t)(dq) \to 0, \qquad t \to \infty,
f \in C(Q).
\]
For all $f \in C(Q)$, $\int_Q f(q) \mu^\di(t)(dq)$ is a Borel measurable function
of $t$. Standard arguments imply that $\mu^\di (t)( B)$ is a Borel measurable function
of $t$ first for all open subsets of $Q$ and then for all Borel subsets of $Q$.
\end{proof}


\section{Directed mutation kernels}
\label{sec:dir-mut}

Alternatively to (A4), we  assume that $Q^\di$ is an open subset of $Q$.
We make this assumption
because it makes $\chi_{Q^\di}$ continuous.
It has the unfortunate consequence that $Q^\di$ is separated
from the rest of $Q$ and that $Q$ is not connected.

\begin{definition}
\label{def:directed}
Let $q^\di \in Q^\di$. A mutation kernel $\gamma: Q \to \cP_{w}$ is called  {\em directed
to $q^\di $} if $q^\di$ is an isolated point of $Q$
and the following hold:

\begin{itemize}

\item[(a)] For all $q \in Q^\di $,  $ \gamma(q)( \{q^\di\}) > 0$
and $\gamma (q) ( Q^\di) =1$.

\item[(d)] $\gamma (q^\di) (\{q^\di\}) =1$.

\end{itemize}
\end{definition}

It is easy to see that there is at most one $q^\di \in Q^\di$ a
mutation kernel can be directed to. Notice that every directed mutation
kernel is optimum preserving (Definition \ref{def:kernel-opt-pres}). In turn, if $Q^\di =\{q^\di\}$ and $\gamma$
is optimum preserving, then $\gamma$ is directed to $q^\di$. If $\gamma$
is a  mutation kernel directed to $q^\di$, then $K^\di \delta_{q^\di}$
is an equilibrium of (\ref{M1}). If $\gamma$ is the no-mutation kernel
$\gamma(q) = \delta_q$ and $Q^\di =\{q^\di\}$ is a singleton set and $q^\di$
is an isolated point of $Q$, then
the no-mutation kernel is trivially directed to $q^\di$.

\begin{theorem}
\label{re:directed}
Assume (A1)-(A3).
Let $q^\di \in Q^\di $ and $\gamma$ be a mutation kernel directed to $q^\di $.
Then, for all compact subsets $C$ of $\cM_{w+}$ with $u (Q^\di) > 0$
for all $u \in C$, $\phi (t, u, \gamma) \to K^\di  \delta_{q^\di}$
as $t \to \infty$
uniformly for $u \in C$.
\end{theorem}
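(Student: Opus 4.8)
The plan is to sidestep a direct forward‑convergence argument (which, without (A4), must contend with traits in $\tilde Q:=Q\setminus Q^\di$ that can be fitter than $Q^\di$ at low density) and instead to identify the persistence attractor supplied by the robust persistence theory. Since $q^\di$ is isolated and $Q^\di$ is open, both $\{q^\di\}$ and $Q^\di$ are clopen, so $\nu\mapsto\nu(\{q^\di\})$ and $\nu\mapsto\nu(Q^\di)$ are weak$^*$ continuous. A directed kernel is optimum preserving, so (assuming $K^\di>0$; the case $K^\di=0$ is immediate from Theorem \ref{BS}) Theorem \ref{re:pers-robust} applies with $\gamma^\di=\gamma$ and produces a persistence attractor $\tilde A_\gamma$ with $\nu(Q^\di)>0$ for every $\nu\in\tilde A_\gamma$, attracting each compact set $V$ with $\inf_{u\in V}u(Q^\di)>0$ uniformly; by Corollary \ref{re:compact-glob-attr} we also have $\nu(Q)\le K^\di$ on $\tilde A_\gamma$. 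I would reduce the theorem to the single claim $\tilde A_\gamma=\{K^\di\delta_{q^\di}\}$: granting this, part (B) of Theorem \ref{re:pers-robust} yields exactly the asserted uniform convergence on any compact $C$ with $u(Q^\di)>0$ for all $u\in C$, since such a $C$ satisfies $\inf_{C}u(Q^\di)>0$ by continuity.

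The key observation is a monotonicity enforced on the attractor. For a total solution $\mu:\R\to\tilde A_\gamma$ put $x(t)=\mu(t)(\{q^\di\})$. Using $\gamma(q^\di)(\{q^\di\})=1$, so that $q^\di$ never leaks, one computes
\[ x'(t)=\big[\cR(\bar\mu(t),q^\di)-1\big]D(\bar\mu(t),q^\di)\,x(t)+\int_{Q\setminus\{q^\di\}}B(\bar\mu(t),q)\,\gamma(q)(\{q^\di\})\,\mu(t)(dq). \]
On $\tilde A_\gamma$ we have $\bar\mu(t)\le K^\di$, hence $\cR(\bar\mu(t),q^\di)\ge\cR(K^\di,q^\di)=1$ and the first term is nonnegative, while the integral is nonnegative as well; thus $x$ is nondecreasing along every total trajectory in $\tilde A_\gamma$. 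Here property (a), namely $\gamma(q)(\{q^\di\})>0$ on $Q^\di$, guarantees that mutational inflow into $q^\di$ is genuinely present, and clopenness of $\{q^\di\}$ makes $x(\cdot)$ continuous on $\cM_{w+}$ and $C^1$ in $t$.

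To conclude, let $x_{\min}=\min_{\nu\in\tilde A_\gamma}\nu(\{q^\di\})$, attained at some $\nu_0$, and take a total trajectory $\mu$ with $\mu(0)=\nu_0$. As $x$ is nondecreasing and $x(0)=x_{\min}$ is the global minimum over the invariant set $\tilde A_\gamma$, $x$ is constant on $(-\infty,0]$, so $x'(0)=0$ and both summands above vanish at $t=0$. Vanishing of the integral forces $\mu(0)(Q^\di\setminus\{q^\di\})=0$ (since $B>0$ and $\gamma(\cdot)(\{q^\di\})>0$ on $Q^\di$), whence $x_{\min}=\nu_0(Q^\di)>0$; vanishing of the first summand with $x_{\min}>0$ forces $\cR(\bar\mu(0),q^\di)=1$, i.e.\ $\bar\mu(0)=K^\di$ by (A3). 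But $\bar\mu\le K^\di$ on $\tilde A_\gamma$, so $t=0$ maximizes $\bar\mu$ and $\bar\mu'(0)=0$; since the $Q^\di$‑mass of $\mu(0)$ sits entirely at $q^\di$ while $\cR(K^\di,q)<1$ on $\tilde Q$, the relation $0=\bar\mu'(0)=\int_Q[\cR(K^\di,q)-1]D(K^\di,q)\mu(0)(dq)$ forces $\mu(0)(\tilde Q)=0$. Hence $\nu_0=K^\di\delta_{q^\di}$ and $x_{\min}=K^\di$, so every $\nu\in\tilde A_\gamma$ obeys $\nu(\{q^\di\})=\nu(Q)=K^\di$, i.e.\ $\tilde A_\gamma=\{K^\di\delta_{q^\di}\}$.

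The main obstacle is exactly the extinction of $\tilde Q$ without (A4): traits in $\tilde Q$ may out‑reproduce $Q^\di$ at low density, so the comparison of Proposition \ref{extinction} is unavailable. The monotonicity of $x$ on the attractor is what removes the difficulty, because it uses only $\bar\mu\le K^\di$ together with the directedness of $\gamma$, never a pointwise fitness comparison. Should a direct forward proof be preferred, I would show $\mu(t)(\tilde Q)\to 0$ via the ratio $z=\mu(t)(\tilde Q)\,\mu(t)(Q^\di)^{-\theta}$ with $\theta$ large: robust persistence keeps $\mu(t)(Q^\di)$ bounded away from $0$, and because $Q^\di$ grows at a rate bounded below whenever $\bar\mu\le\tilde K$ one verifies $z'/z\le b(\bar\mu)-\theta a(\bar\mu)<0$ uniformly, where $a,b$ denote the extreme per‑capita growth rates on $Q^\di$ and $\tilde Q$; then $\bar\mu\to K^\di$ follows from the fluctuation lemma and $\mu(t)(Q^\di\setminus\{q^\di\})\to 0$ from the same inflow estimate. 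The attractor route is cleaner, and is the one I would write up.
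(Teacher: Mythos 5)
Your proof is correct, and while it shares the paper's overall architecture --- reduce to the persistence attractor $\tilde A_\gamma$ supplied by Theorem \ref{re:pers-robust} (together with $\nu(Q)\le K^\di$ on the attractor from Corollary \ref{re:compact-glob-attr}) and then show $\tilde A_\gamma=\{K^\di\delta_{q^\di}\}$ --- the identification of the attractor is done by a genuinely different mechanism. The paper constructs the Volterra-type Lyapunov function $L(\nu)=\nu(\{q^\di\})+K^\di\bigl(\ln K^\di-\ln \nu(\{q^\di\})\bigr)+c\,\nu(Q\setminus Q^\di)$, shows $\dot L\le 0$ on the attractor after choosing $c>0$ small enough that $cG(x,q)-G(x,q^\di)<0$ for all $q\in Q\setminus Q^\di$ and $x\in[0,K^\di]$ (where $G=B-D$), identifies $\{\dot L=0\}$ with the singleton, and invokes the LaSalle-type result \cite[Thm.2.53]{SmTh}. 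You instead observe that $x(t)=\mu(t)(\{q^\di\})$ is nondecreasing along every total trajectory in the attractor --- using only $\bar\mu\le K^\di$, $\gamma(q^\di)(\{q^\di\})=1$, and nonnegativity of the mutational inflow --- and then run an extremal argument at a minimizer $\nu_0$ of the (weak$^*$ continuous, since $\{q^\di\}$ is clopen) functional $\nu\mapsto\nu(\{q^\di\})$ on the compact invariant set $\tilde A_\gamma$: constancy of $x$ on $(-\infty,0]$ kills both nonnegative summands of $x'$, which successively forces $\nu_0(Q^\di\setminus\{q^\di\})=0$ (via $\gamma(q)(\{q^\di\})>0$ and $B(\cdot,q)>0$ on $Q^\di$), then $\bar\mu(0)=K^\di$ (via (A3) and $x_{\min}>0$), and finally $\nu_0(Q\setminus Q^\di)=0$ from $\bar\mu'(0)=0$ and $\cR(K^\di,q)<1$ off $Q^\di$. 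This buys you a more elementary proof: no auxiliary constant $c$ to tune, no logarithmic Lyapunov function, and no appeal to \cite[Thm.2.53]{SmTh}. The only details worth writing out are that $B(\bar\mu(0),q)\ge B(K^\di,q)=D(K^\di,q)>0$ for $q\in Q^\di$ when $\bar\mu(0)\le K^\di$ (the paper makes the same observation when asserting $b>0$), and that the degenerate case $K^\di=0$ is better handled by Corollary \ref{re:compact-glob-attr} (the global attractor is $\{0\}$) than by Theorem \ref{BS} alone, since uniformity on compact sets is claimed.
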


\begin{proof}
By Theorem \ref{re:pers-robust} (c), in the language of \cite{SmTh},
the semiflow $\phi (\cdot, \gamma)$ induced by (\ref{M1})
is uniformly $\rho$-persistent for $\rho (\nu) = \nu (Q^\di)$.
 By Corollary \ref{re:compact-glob-attr}, this semiflow has a compact attractor $A_{\gamma}$
of bounded sets in $\cM_{w+}$ with $\nu (Q) \le K^\di$ for all $\nu \in A_{\gamma}$.
By Theorem \ref{re:pers-robust} (B), the semiflow has a $\rho$-persistence
attractor $A_1\subseteq A$, i.e.,
a compact invariant set $A_1\subseteq A$ which attracts all compact sets $V$
in $\cM_{w_+}$ for which $\nu(Q^\di) >0$ for all $\nu \in V$.

We claim that $A_1 = \{K^\di \delta_{q^\di}\}$.
We apply \cite[Thm.2.53]{SmTh} with $A= A_1$ and $\tilde A = \{K^\di \delta_{q^\di}\}$.

Let  $\mu: \R \to A_1$ be a  solution of (\ref{M1}) on $\R$.
Since $\mu$ is defined on the whole real line, it corresponds to a total trajectory.

Since $\mu$ takes its values in the $\rho$-persistence attractor, we have
$\inf_{t \in \R} \mu(t)(Q^\di) > 0$.
By a similar proof as the one of Lemma \ref{re:pos-pres} (b),
$\mu(t)(\{q^\di\})>0$
for all $t \in \R$ because of Definition \ref{def:directed} (a).

 For $\nu \in A_1$ with $\nu (\{q^\di\}) > 0$,
we define the Volterra type Lyapunov-function-to-be
\begin{equation}
L(\nu) = \nu(\{q^\di\}) + K^\di (\ln K^\di - \ln  \nu(\{q^\di\})
+
c \nu (Q \setminus Q^\di).
\end{equation}

It follows from our assumptions that $\chi_{\{q^\di\}}$ and $\chi_{Q \setminus
Q^\di}$
are continuous. So $L$ depends continuously on $\nu$ in the $w^*$ topology.
$L(\mu(t))$ is differentiable in $t$ and $(d/dt) L(\mu(t)) = \dot L (\mu(t))$ where $\dot L$ is
the orbital derivative of $L$ along (\ref{M}),
\[
\begin{split}
\dot L(\nu) = & \int_Q B(\nu (Q), q) \gamma(q)( \{q^\di\}) \nu(dq)
\Big (1 - \frac{K^\di}{\nu(\{q^\di\})}\Big )
\\
- &
D(\nu(Q), q^\di) (\nu(\{q^\di\}) - K^\di )
\\
+ &
c \int_{Q } B(\nu(Q),q ) \gamma(q)( Q\setminus Q^\di) \nu(dq)
\\
- &
c \int_{Q\setminus Q^\di} D(\nu(Q),q ) \nu(dq).
\end{split}
\]
 Set $G (x,q) = B(x,q) - D(x,q)$ for $x \ge 0$. Since $ \gamma(q) (Q \setminus Q^\di) =0
 $ for all $q \in Q^\di$ and $ \gamma (q^\di) ( \{q^\di\}) =1 $ by assumption,
\[
\begin{split}
\dot L(\nu) \le & \int_{Q \setminus \{q^\di\}} B(\nu (Q), q) \gamma(q)( \{q^\di\}) \nu(dq)
\Big (1 - \frac{K^\di}{\nu(\{q^\di\})}\Big )
\\
+ &
G(\nu(Q), q^\di) (\nu(\{q^\di\}) - K^\di )
\\
+ &
c \int_{Q \setminus Q^\di} G(\nu(Q),q )  \nu(dq).
\end{split}
\]
Since $\nu$ is an element in the global attractor, $\nu(\{q^\di\}) \le K^\di$
and the
first term on the right hand side is nonpositive,
\[
\begin{split}
\dot L(\nu) \le & \int_{Q^\di \setminus \{q^\di\}} B(\nu (Q), q) \gamma(q) ( \{q^\di\}) \nu(dq)
\Big (1 - \frac{K^\di}{\nu(\{q^\di\})}\Big )
\\
 & +
G(\nu(Q), q^\di) (\nu(\{q^\di\}) - K^\di ) +
c \int_{Q\setminus Q^\di} G(\nu(Q),q ) \nu(dq).
\\
\end{split}
\]
Define
\[
b = \inf \{ B(x,q); 0 \le x \le K^\di, q \in Q^\di\},
\qquad
\gamma_\di =\inf_{q \in Q^\di\setminus \{q^\di\}} \gamma (q)( \{q^\di\}).
\]
It follows from (A3) that $b > 0$. Since $\gamma^\di$ is directed towards
$q^\di$, $\gamma_\di > 0$.
After rearrangement,
\[
\begin{split}
\dot L(\nu) \le  &
\; b \gamma_\di \nu(Q^\di \setminus \{q^\di\})
\Big (1 - \frac{K^\di}{\nu (\{q^\di\})} \Big )
\\
& +
G(\nu(Q), q^\di) (\nu(Q) - K^\di )
+
G(\nu(Q), q^\di) (\nu(\{q^\di\}) - \nu(Q) )
\\
 & +
c \int_{Q \setminus Q^\di} G(\nu(Q),q )  \nu(dq).
\end{split}
\]
Since $\nu(Q) \le K^\di$, $G(\nu(Q), q^\di) \ge 0$ and so
\[
\begin{split}
\dot L(\nu) \le  &
\; b \gamma_\di \nu(Q^\di \setminus \{q^\di\})
\Big (1 - \frac{K^\di}{\nu (\{q^\di\})} \Big )
+
G(\nu(Q), q^\di) (\nu(Q) - K^\di )
\\&+
G(\nu(Q), q^\di) (\nu({Q^\di}) - \nu(Q) )
+
c \int_{Q \setminus Q^\di} G(\nu(Q),q )  \nu(dq).
\end{split}
\]
We rearrange,
\[
\begin{split}
\dot L(\nu) \le  &
\;b \gamma_\di \nu(Q^\di \setminus \{q^\di\})
\Big (1 - \frac{K^\di}{\nu (\{q^\di\})} \Big )
+
G(\nu(Q), q^\di) (\nu(Q) - K^\di )
\\&
+
 \int_{Q \setminus Q^\di} [cG(\nu(Q),q ) - G(\nu(Q), q^\di)]  \nu(dq).
\end{split}
\]
 Let $\tilde K = \sup_{q \in Q \setminus Q^\di} K(q)$. Since $Q \setminus Q^\di$ is  compact,
 $\tilde K < K^\di$. Let $q \in Q \setminus Q^\di$. Notice that
$G(X,  q^\di) > 0$ for all  $X \in [0,\tilde K]$. So, by choosing $c>0$
small enough, we can achieve that
\[
 cG(x, q) - G(x,  q^\di) <0 , \qquad x \in [0,\tilde K], q \in Q \setminus Q^\di.
 \]
For $x \in (\tilde K, K^\di]$, we have $G(x,q)  <0$ for $q \in Q \setminus Q^\di$,
and for $x \in [\tilde K, K^\di)$
we have $G(x,  q^\di)>0$. So for all
$q \in Q \setminus Q^\di$ and $x \in [\tilde K, K^\di]$,
\[
 cG(x, q) -  G(x,  q^\di) <0 .
 \]
In combination, this inequality holds for all $q \in Q \setminus Q^\di$ and $x \in [0,K^\di]$.

So, if $c>0$ is chosen
 small enough, $\dot L(\nu) \le 0$
and $\dot L(\nu) =0$  only if $\nu(Q) =K^\di $ and $\nu(Q\setminus Q^\di)=0 $
and so  $\nu(Q^\di)=K$ as well.
Moreover $\dot L(\nu) =0$ only if $\nu(\{q^\di\} ) = K^\di$
or $\nu (Q \setminus \{q^\di\}) =0$. Combined with the other
information, $\dot L(\nu) =0$ only if $\nu(Q) =K^\di = \nu(\{q^\di\})$,
i.e., $\nu $ is the point measure concentrated at $q^\di$  taking the value $K^\di$.

Recall that we apply \cite[Thm.2.53]{SmTh} with $A= A_1$ and $\tilde A = \{K^\di \delta_{q^\di}\}$.
All we need to show is that every solution $\mu : \R
\to A_1 $ of (\ref{M}) with $\dot L (\mu(t)) = 0 $ satisfies
$\mu(t) = K^\di \delta_{q^\di}$ which we just did.
\end{proof}


\begin{theorem}
Assume (A1)-(A3) and let $Q^\di$ be open in $Q$ and
$q^\di \in Q^\di$ be an isolated point of $Q$
and $\gamma^\di $ be a mutation
kernel directed towards $q^\di$. Assume that $K^\di > 0$.

Then, for any $\epsilon > 0$,  there exists some $\delta_\epsilon > 0$ such that
\[
\limsup_{t\to \infty} \phi(t, u, \gamma)  (Q \setminus \{q^\di\}) < \epsilon,
\quad
\limsup_{t\to \infty} |\phi(t,u, \gamma ) (Q) - K^\di| < \epsilon
\]
for all $\gamma \in C(Q, \cP_w)$ with $\|\gamma- \gamma^\di\| < \delta_\epsilon$
and all $u \in \cM_{w+}$ with $u(Q^\di)>0$.
\end{theorem}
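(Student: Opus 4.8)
The plan is to derive this robustness statement from the persistence theory of Theorem~\ref{re:pers-robust} combined with the sharp description of the limit set for the exactly directed kernel obtained in Theorem~\ref{re:directed}. The decisive structural fact is that, since $q^\di$ is an isolated point of $Q$, the singleton $\{q^\di\}$ is simultaneously open and closed, so $\chi_{\{q^\di\}}$ is continuous and hence both functionals $\rho_1(\nu) = \nu(Q \setminus \{q^\di\})$ and $\rho_2(\nu) = \nu(Q)$ are continuous in the weak$^*$ topology. First I would fix $\epsilon > 0$ and introduce the open neighborhood
\[
W_\epsilon = \{\nu \in \cM_{w+};\ \rho_1(\nu) < \epsilon/2 \text{ and } |\rho_2(\nu) - K^\di| < \epsilon/2\}
\]
of $K^\di \delta_{q^\di}$, which indeed contains $K^\di \delta_{q^\di}$ because $\rho_1(K^\di \delta_{q^\di}) = 0$ and $\rho_2(K^\di \delta_{q^\di}) = K^\di$.

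Next I would pin down the persistence attractor at the unperturbed kernel. Since $\gamma^\di$ is directed to $q^\di$, Theorem~\ref{re:directed} applies, and the Volterra-type Lyapunov argument in its proof shows that the $\rho$-persistence attractor for $\rho(\nu) = \nu(Q^\di)$ is the singleton $\{K^\di \delta_{q^\di}\}$; that is, the attractor $\tilde A_{\gamma^\di}$ of Theorem~\ref{re:pers-robust}(B) equals $\{K^\di \delta_{q^\di}\} \subseteq W_\epsilon$. Applying the upper semicontinuity in Theorem~\ref{re:pers-robust}(C) to the open set $W = W_\epsilon$ then produces some $\delta_\epsilon := \delta_{W_\epsilon} \in (0, \tilde \delta)$ such that $\tilde A_\gamma \subseteq W_\epsilon$ for every $\gamma \in C(Q, \cP_w)$ with $\|\gamma - \gamma^\di\| < \delta_\epsilon$; here $\tilde \delta$ is the radius from Theorem~\ref{re:pers-robust}(A), so $\tilde A_\gamma$ exists for all such $\gamma$.

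Finally I would transfer the inclusion $\tilde A_\gamma \subseteq W_\epsilon$ to individual orbits. Fixing $\gamma$ with $\|\gamma - \gamma^\di\| < \delta_\epsilon$ and $u \in \cM_{w+}$ with $u(Q^\di) > 0$, I take the compact set $V = \{u\}$, which satisfies $\inf_{v \in V} v(Q^\di) = u(Q^\di) > 0$, so the attraction property of Theorem~\ref{re:pers-robust}(B) gives $d(\phi(t,u,\gamma), \tilde A_\gamma) \to 0$ as $t \to \infty$. By Theorem~\ref{BS} the whole orbit $\{\phi(t,u,\gamma);\ t \ge 0\}$ stays in a fixed bounded set $B$, on which, by Lemma~\ref{re:measures-weak-top} and the Alaoglu--Bourbaki theorem, the norm $p$ induces the weak$^*$ topology and both $\tilde A_\gamma$ and $B \setminus W_\epsilon$ are compact; being disjoint they have positive $p$-distance, which forces $\phi(t,u,\gamma) \in W_\epsilon$ for all large $t$. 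Thus $\rho_1(\phi(t,u,\gamma)) < \epsilon/2$ and $|\rho_2(\phi(t,u,\gamma)) - K^\di| < \epsilon/2$ eventually, giving the two claimed $\limsup$ bounds (in fact strictly below $\epsilon$).

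The step I expect to carry the real weight is the identification $\tilde A_{\gamma^\di} = \{K^\di \delta_{q^\di}\}$, which rests entirely on the Lyapunov function built in the proof of Theorem~\ref{re:directed}; once this singleton description and the upper semicontinuity of Theorem~\ref{re:pers-robust}(C) are in hand, the rest is the routine topological transfer above, whose only delicate point---propagating weak$^*$ attraction to the two clopen-set functionals $\rho_1, \rho_2$---is dispatched by confining every orbit to a bounded weak$^*$-compact region.
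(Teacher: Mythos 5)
Your proposal is correct and follows essentially the same route as the paper: build the weak$^*$-open neighborhood $W$ of $K^\di\delta_{q^\di}$ from the two clopen-set functionals, invoke the upper semicontinuity of the persistence attractors in Theorem~\ref{re:pers-robust}(C) to get $\tilde A_\gamma\subseteq W$ for $\gamma$ near $\gamma^\di$, and then use the attraction property of $\tilde A_\gamma$ to push individual orbits into $W$. In fact you make explicit a step the paper leaves tacit, namely that Theorem~\ref{re:directed} identifies $\tilde A_{\gamma^\di}=\{K^\di\delta_{q^\di}\}$, which is what licenses the application of part (C).
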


\begin{proof}
Let $\epsilon > 0$ and let $W$ be the set of measures $\nu$ with $\nu(Q\setminus \{q^\di\})
< \epsilon $ and $|\nu (Q) - K|< \epsilon$. $W$ is an open neighborhood of $
K^\di \delta_{q^\di}$
in the weak$^*$-topology.
By Theorem \ref{re:pers-robust}, there exists some $\delta_\epsilon > 0$
such that $\tilde A_\gamma \subseteq W$ for all $\gamma \in C(Q, \cP_w) $
with $\|\gamma- \gamma^\di\| < \delta_\epsilon $. Since $\tilde A_\gamma$
is the persistence attractor for $\phi(\cdot, \gamma)$, for any $u \in \cP_{w+}$
with $u(Q^\di) > 0$ there exists some $t_u > 0$ such that $\phi (t,u,\gamma) \in W$
for $t \ge t_u$. This implies the assertion.
\end{proof}


\section{Discrete strategies and small mutations}
\label{sec:disc}

The results of this section are for finitely many strategies.  This means
that $Q$ is a finite set. Any finite set becomes a metric space if equipped
with the discrete metric to which any other metric on it is equivalent.

Notice that all points of $Q$ are isolated and all subsets of $Q$ are both
compact and open.

The main result of this section is to
demonstrate that, if there is
a unique strategy $q^\di \in Q$ under which the carrying capacity is maximal, $K(q^\di)
= K^\di$, then
there is a neighborhood around the pure selection
kernel where unique  equilibria are obtained which attract
all solutions which adopt this strategy at least partially.

We first show that in the above case the model \eqref{M}
reduces to a system of ordinary differential equations. To this end,
let $Q=\{q_{i}\}_{i=1}^{N}$.
Then $\mathcal{M}= \text{span}_{\mathbb{R}}\{\delta_{e_{i}}\}$
can be identified with $\R^N$ and
$C(Q,{\cal P}_w )$ with the set $\Gamma$ of nonnegative
$ N\times N$ matrices whose rows sum to
one.
  Let $ x_j(t) =\mu(t)(\{q_j\})$, $B_{j}(\bar x) = B(\bar x, q_j)$ and $D_{j}(\bar x) = D(\bar x, q_j)$
  where $ \bar x=\sum_{j=1}^{N} x_j$, then the  system \eqref{M} reduces to the following
differential equations system:

\begin {equation}
\left\{\begin{array}{ll}
\displaystyle  \frac{d}{dt}{x_j}(t;u, \gamma)
=
\sum_{i=1}^N  B_i (\bar x(t))x_i(t)\gamma_{ij} - D_j(\bar x(t))x_j(t), \qquad  j = 1, . . . , N ,
\\
x_j(0;u,\gamma) = u_j,
\end{array}\right.
\label{discrete}
\end{equation}
where $\gamma=\{ \gamma_{ij} \}$ is a row stochastic matrix.
The pure selection kernel $ \gamma( \hat q) = \delta_{\hat q} \in C(Q,{\cal P}_w )$
is represented as the $\I = I_{N\times N}$ identity matrix.

Note that in \eqref{discrete} $\gamma_{ij}$ represents the proportion of strategy $i$
 offspring that belong to strategy $j$. Since the sum of the proportions of offspring of strategy $i$ must be one, $\sum_{j=1}^N \gamma_{ij}= 1$
 for all $i=1, \ldots, N$. The norm on $C(Q, \cP_w)$ given by (\ref{eq:metric-mutation})
is equivalent to any of the matrix norms on the set  $\Gamma$ of row stochastic matrices.
Notice that $\Gamma$ is compact.

 Furthermore, let $K_j$ denote $K(q_j)$, the carrying capacity of strategy $j$.

The goal in this section is to study the dynamics of \eqref{discrete} when $\gamma$ is a small perturbation of the identity. To this end, we assume for the rest of this section that the fittest strategy is unique and that it
occurs at $q_1$, without loss of generality.
Thus, in this case $K^\diamond = K_j = K_1$. We assume $K_1 > 0$ and $K_1 > K_j$
for $j=2, \ldots, N$ and

\begin{itemize}
\item [{\bf (A6)}] $B_j$ and $D_j$ are continuously differentiable
{on $[0,\infty)$}  and
$ \displaystyle B_1'(K_1) - D_1'(K_1) < 0 $.
\end{itemize}

To  establish our asymptotic behavior result we recall the following theorem:
{Let
$x:[0,\infty) \to \R^N$ and}
\begin{equation}
\label{finite}
{x}^\prime = f(x, \lambda),
\end{equation}
where $ f: U \times \Lambda \rightarrow \mathbb{R}^N $ is
continuous, $ U \subseteq \mathbb{R}^{N}$, $\Lambda \subseteq
\mathbb{R}^{k}$ and $ \frac{\partial f}{\partial x} (x,\lambda) $ is continuous on $ U
\times \Lambda $. We write $x(t, z, \lambda) $ for the solution of
\eqref{finite} satisfying $x(0)=z.$

\begin{theorem} \label{pert1} (Smith and Waltman \cite{SmithWalt})
Assume that $(x_0, \lambda_{0}) \in U\times\Lambda $, $x_0 \in
int(U)$, $f(x_0, \lambda_0) =0$, all eigenvalues of $\frac{\partial f}{\partial x}(x_0,
\lambda_0)$  have negative real part, and $x_0$ is globally
attracting in $U$ for solutions of \eqref{finite} with $\lambda
=\lambda_0 $. If
\begin{description}
\item (H1) there exists a compact set $ \mathcal{D} \subseteq U$  such that
for each $ \lambda \in \Lambda$ and each $ z \in U$, $x(t,z,
\lambda) \in \mathcal{D}$ for all large t,
\end{description}
then there exists $\epsilon >0$ and a unique point $\widehat{x}(
\lambda) \in U$ for $\lambda \in B_{\Lambda}( \lambda_0, \epsilon)$ (the ball
in $\Lambda$ of radius $\epsilon$ around $\lambda_0$)
such that

\begin{itemize}
\item
$\hat x(\lambda)$ depends continuously on $\lambda \in B_{\Lambda}( \lambda_0, \epsilon)$ and $\hat x (\lambda_0) = x_0$,

\item $f(\widehat{x}( \lambda),\lambda)=0$ and

\item $x(t,z, \lambda)
\rightarrow \widehat{x}(\lambda)$, as $t\rightarrow \infty$ for all
$z \in U$ and $\lambda \in B_{\Lambda}( \lambda_0, \epsilon)$.

\end{itemize}

\end{theorem}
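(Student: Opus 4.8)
The plan is to treat this as a standard regular-perturbation result for a globally attracting, locally asymptotically stable equilibrium, combining the Implicit Function Theorem with a continuous-dependence argument. First I would produce the equilibrium branch: since every eigenvalue of $\frac{\partial f}{\partial x}(x_0,\lambda_0)$ has negative real part, this Jacobian is in particular nonsingular, so the Implicit Function Theorem yields a neighborhood of $\lambda_0$ and a unique continuous map $\lambda \mapsto \hat x(\lambda)$ with $f(\hat x(\lambda),\lambda)=0$ and $\hat x(\lambda_0)=x_0$. Because the eigenvalues of a matrix depend continuously on its entries and $(x,\lambda)\mapsto \frac{\partial f}{\partial x}(x,\lambda)$ is continuous, I can shrink the parameter ball so that $\frac{\partial f}{\partial x}(\hat x(\lambda),\lambda)$ still has its spectrum in the open left half-plane; hence each $\hat x(\lambda)$ lies in $\mathrm{int}(U)$ and is locally asymptotically stable. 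This already delivers the existence, uniqueness, and continuity claims, together with $\hat x(\lambda_0)=x_0$.

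Second, I would establish a \emph{uniform} local basin. Using the uniform negativity of the spectrum along the branch, a standard Lyapunov estimate for the linearization (or a continuity/Hartman--Grobman argument) produces nested neighborhoods $W'\Subset W$ of $x_0$ and a further shrinking of $\epsilon$ so that, for every $\lambda\in B_\Lambda(\lambda_0,\epsilon)$, both $W'$ and $W$ are forward invariant and contained in the basin of attraction of $\hat x(\lambda)$, with $x(t,z,\lambda)\to\hat x(\lambda)$ whenever the trajectory enters $W$. The nesting gives the slack needed later to absorb the perturbation from $\lambda_0$ to nearby $\lambda$.

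The crux, and where I expect the real work to be, is upgrading the pointwise global attraction at $\lambda_0$ to attraction of all of $U$ for nearby $\lambda$. I would first establish a uniform capture time at $\lambda_0$: since $x_0$ is globally attracting in $U$, for each $z\in\mathcal{D}$ there is a time $t_z$ with $x(t_z,z,\lambda_0)\in W'$; continuous dependence on initial data extends this to a neighborhood of $z$, and compactness of $\mathcal{D}$ produces a finite subcover and a single time $T=\max_i t_{z_i}$ with $x(T,z,\lambda_0)\in W'$ for all $z\in\mathcal{D}$ (forward invariance of $W'$ lets me pass from the individual times $t_{z_i}$ to their maximum). I would then transfer this to nearby parameters: because $f$ and $\frac{\partial f}{\partial x}$ are continuous, $x(T,z,\lambda)$ is continuous in $(z,\lambda)$ uniformly on the compact product $\mathcal{D}\times\overline{B_\Lambda(\lambda_0,\epsilon)}$, so after a final shrinking of $\epsilon$ I obtain $x(T,z,\lambda)\in W$ for all $z\in\mathcal{D}$ and all $\lambda\in B_\Lambda(\lambda_0,\epsilon)$.

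Finally I would assemble the pieces. Given any $z\in U$ and $\lambda\in B_\Lambda(\lambda_0,\epsilon)$, assumption (H1) provides a time $t_1$ with $x(t_1,z,\lambda)\in\mathcal{D}$; applying the capture estimate to the initial point $x(t_1,z,\lambda)$ gives $x(t_1+T,z,\lambda)\in W$, and then the uniform-basin step forces $x(t,z,\lambda)\to\hat x(\lambda)$ as $t\to\infty$. Combined with the first paragraph this yields all three conclusions. The main obstacle is precisely this global step: producing the single capture time $T$ on the compact set $\mathcal{D}$ from merely pointwise global attraction at $\lambda_0$, and arranging the basin $W$ to be simultaneously forward invariant and uniform in $\lambda$; here (H1) is indispensable, as it is exactly what prevents trajectories from drifting toward $\partial U$ before the capture argument can take hold.
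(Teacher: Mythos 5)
The paper does not prove this statement: it is quoted verbatim as a known result of Smith and Waltman \cite{SmithWalt}, so there is no internal proof to compare against. Your reconstruction is correct and is essentially the argument of the cited source: implicit function theorem plus continuity of the spectrum for the local branch and a uniform local basin, then the compactness/continuous-dependence step that converts pointwise global attraction at $\lambda_0$ into a single capture time $T$ on $\mathcal{D}$ and transfers it to nearby $\lambda$, with (H1) used exactly to funnel arbitrary initial data in $U$ into $\mathcal{D}$. The only points worth tightening are cosmetic: $\hat x(\lambda)\in \mathrm{int}(U)$ follows from continuity of $\hat x$ and $x_0\in\mathrm{int}(U)$ rather than from the spectrum, and uniqueness of $\hat x(\lambda)$ in all of $U$ (not just near $x_0$) should be noted as an immediate consequence of the global attraction you establish, since any other equilibrium in $U$ would be a constant trajectory converging to $\hat x(\lambda)$.
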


In order to apply Theorem \ref{pert1} to our model \eqref{discrete} we let
$U=\{ x \in \mathbb{R}^N_+ ; x_1 > 0\}$,
$\Lambda$ be an appropriate subset of $\Gamma$ still to be determined,
and $ f: U \times \Lambda \rightarrow \mathbb{R}^N $,
where $f= (f_j)_{j=1}^{N}$, $x= (x_j)_{j=1}^{N}$,
$\lambda= \gamma= (\gamma _{ij})_{i,j=1}^N$ and

\[
f_j(x,\lambda)= \sum_{i=1}^N  B_i (\bar x) x_i \gamma_{ij} - D_j(\bar x)x_j, \qquad \bar x = \sum_{k=1}^N x_k.
\]

 We also let $$(x_0,\lambda_0)=( K_1 e^1, \I),$$
with $e^1$ denoting the first of the canonical basis vectors of $\R^N$.

\begin{remark}One may notice that $(K_1 e^1)$
mentioned above is not an interior point of $U$.
However, in Theorem \ref{pert1} the assumption that $x_0$ is an interior point of $U$
 is  unnecessarily restrictive. One can use one-sided derivatives
 with respect to some cone or wedge \cite{SmithWalt}.
 Thus, for the model \eqref{discrete} one can use one-sided derivatives of $f$
 with respect to $\mathbb{R}^N_+$.
 \end{remark}

We now have the following theorem describing the dynamics of the model \eqref{discrete}
when mutation is small:

\begin{theorem}
\label{main-finite}
Assume that (A1)-(A3) and (A5), (A6) hold.
Then there exists some $\delta > 0$ such that, for each matrix $\gamma \in \Gamma$
with $\|\gamma - \I \| < \delta$,
 there exists a stable  equilibrium
$x^*_\gamma $ of the ordinary differential equation system \eqref{discrete} with
$x^*_\gamma $ converging to $x_\I^*=
K_1 e^1$ as $ \| \gamma - \I \|  \rightarrow 0$.
Furthermore, if $\|\gamma - \I \|  < \delta $, $x^*_\gamma$
attracts all solutions $x$ of (\ref{discrete}) with $x_1(0) > 0$.
\end{theorem}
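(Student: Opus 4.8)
The plan is to recast the ordinary differential equation system \eqref{discrete} in the form \eqref{finite} with parameter $\lambda = \gamma$ and apply the Smith--Waltman perturbation theorem (Theorem \ref{pert1}) at the base point $(x_0,\lambda_0) = (K_1 e^1, \I)$, taking $U = \{x \in \R^N_+ : x_1 > 0\}$ and $\Lambda = \{\gamma \in \Gamma : \|\gamma - \I\| < \tilde\delta\}$ for a radius $\tilde\delta$ to be fixed below. Writing $G_j(s) = B_j(s) - D_j(s)$, the unperturbed ($\gamma = \I$) field is $f_j(x,\I) = G_j(\bar x)\,x_j$, and $f(x_0,\I) = 0$ since $G_1(K_1) = (\cR(K_1,q_1)-1)D_1(K_1) = 0$ while the remaining components each carry a factor $x_j = 0$.

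The first task is the spectral condition. Differentiating gives $\frac{\partial f_j}{\partial x_k}(x,\I) = G_j'(\bar x)\,x_j + G_j(\bar x)\,\delta_{jk}$, which at $x_0 = K_1 e^1$ is upper triangular: the first row equals $G_1'(K_1)K_1\,(1,\dots,1)$ and each row $j \ge 2$ has its only nonzero entry $G_j(K_1)$ on the diagonal (since $x_0 \in \partial U$ these are one-sided derivatives with respect to $\R^N_+$, as the remark following Theorem \ref{pert1} permits). Hence the eigenvalues are $G_1'(K_1)K_1$ and $G_j(K_1)$, $j \ge 2$. By (A6), $G_1'(K_1)K_1 = (B_1'(K_1)-D_1'(K_1))K_1 < 0$; and for $j \ge 2$, (A5) gives $K_j < K_1$, so $\cR(K_1,q_j) \le 1$ because $\cR(\cdot,q_j)$ is nonincreasing, with equality excluded by the uniqueness clause of (A3) (it would force $K_j = K_1$). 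Thus $G_j(K_1) = (\cR(K_1,q_j)-1)D_j(K_1) < 0$ and $\frac{\partial f}{\partial x}(x_0,\I)$ has all eigenvalues in the open left half-plane.

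Next I must verify that $x_0 = K_1 e^1$ attracts every solution of the unperturbed system with $x_1(0) > 0$; this is the discrete pure-selection model \eqref{selection}. The claim reduces to $\bar x(t) \to K_1$, for once this holds, $G_j(\bar x) \to G_j(K_1) < 0$ forces $x_j \to 0$ for $j \ge 2$ and hence $x_1 = \bar x - \sum_{j\ge2} x_j \to K_1$. Theorem \ref{BS} gives $\limsup_t \bar x \le K_1$, and if this limsup were strictly smaller then $G_1(\bar x)$ would stay bounded below by a positive constant, forcing $x_1$ to grow without bound and contradicting boundedness; thus $\limsup_t \bar x = K_1$. The genuinely hard point is the matching $\liminf_t \bar x = K_1$, i.e.\ competitive exclusion of all suboptimal strategies, which I would draw from the convergence analysis of Section \ref{sec:pure-sel} (Theorem \ref{CSS1} with $Q^\di = \{q_1\}$), observing that $x_1(0) > 0$ is exactly $q^\di \in \mathrm{supp}(\mu(0))$.

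Finally, (H1) requires a single compact $\mathcal D \subseteq U$ absorbing all trajectories uniformly in $z \in U$ and $\gamma \in \Lambda$. The eventual bound $\bar x \le K_1 + 1$ is uniform by Theorem \ref{BS}; for a uniform lower bound on $x_1$ I invoke robust persistence, since $Q^\di = \{q_1\}$ is open, $K^\di = K_1 > 0$, and $\I$ is optimum preserving: Theorem \ref{re:pers-robust}(A) supplies $\tilde\delta > 0$ with $\liminf_t \phi(t,u,\gamma)(\{q_1\}) \ge \tilde\delta$ whenever $u(\{q_1\}) > 0$ and $\|\gamma - \I\| < \tilde\delta$, i.e.\ $\liminf_t x_1(t) \ge \tilde\delta$. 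Then $\mathcal D = \{x \in \R^N_+ : x_1 \ge \tilde\delta/2,\ \bar x \le K_1 + 1\}$ is compact, contained in $U$, and absorbing for $\Lambda = \{\|\gamma - \I\| < \tilde\delta\}$. Theorem \ref{pert1} now yields $\epsilon > 0$ and a branch $x^*_\gamma := \hat x(\gamma)$, continuous with $x^*_\I = K_1 e^1$, of equilibria attracting every solution with $x_1(0) > 0$ for $\|\gamma - \I\| < \epsilon$; taking $\delta = \min\{\epsilon,\tilde\delta\}$ and shrinking it so that the continuously varying eigenvalues of $\frac{\partial f}{\partial x}(x^*_\gamma,\gamma)$ remain in the left half-plane gives the asserted stability. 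The main obstacle is the global-attraction step: because (A4) is not assumed, $\liminf_t \bar x = K_1$ cannot be read off from relative-fitness domination and must instead be extracted from the bounds $k_\di \le \liminf_t \bar x \le \limsup_t \bar x \le K^\di$ together with the sign structure of the $G_j$.
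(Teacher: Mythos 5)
Your overall strategy coincides with the paper's: recast \eqref{discrete} as \eqref{finite}, verify the hypotheses of the Smith--Waltman theorem (Theorem \ref{pert1}) at the base point $(K_1e^1,\I)$, compute the same upper-triangular Jacobian with eigenvalues $(B_1'(K_1)-D_1'(K_1))K_1$ and $B_j(K_1)-D_j(K_1)$, $j\ge 2$, and obtain (H1) from the robust persistence result (Theorem \ref{re:pers-robust}) with $Q^\di=\{q_1\}$, exactly as the paper does. The spectral computation, the sign analysis of $G_j(K_1)$ via (A3) and (A5), and the construction of the absorbing compact set are all correct.

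The gap is in the global-attraction hypothesis for the unperturbed system $\gamma=\I$. You propose to obtain $x(t)\to K_1e^1$ for $x_1(0)>0$ from Theorem \ref{CSS1}, but that theorem assumes (A1)--(A5), in particular (A4), which is not among the hypotheses of Theorem \ref{main-finite}. Your closing sentence concedes that without (A4) the crucial step $\liminf_{t\to\infty}\bar x(t)=K_1$ must be extracted from the a priori bounds and the signs of the $G_j$, but those facts alone do not suffice: the lower bound $k_\di\le\liminf_{t\to\infty}\bar x(t)$ is vacuous when $k_\di=0$, and your fluctuation argument only delivers $\limsup_{t\to\infty}\bar x(t)=K_1$. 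So the competitive-exclusion step is left unproved. The paper closes this gap by invoking Theorem \ref{re:directed} instead: since $Q$ is finite, every point is isolated, $Q^\di=\{q_1\}$ is open, and the pure-selection kernel $\I$ is trivially directed to $q_1$ (see the remarks after Definition \ref{def:directed}); Theorem \ref{re:directed} requires only (A1)--(A3) and yields $\phi(t,u,\I)\to K^\di\delta_{q_1}$ for all $u$ with $u(\{q_1\})>0$ by means of the Volterra-type Lyapunov function $L(\nu)=\nu(\{q_1\})+K^\di\big(\ln K^\di-\ln\nu(\{q_1\})\big)+c\,\nu(Q\setminus\{q_1\})$. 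That Lyapunov argument is precisely the (A4)-free replacement your proof is missing; with it substituted for the appeal to Theorem \ref{CSS1}, the rest of your argument goes through.
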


{Here $\|\cdot\|$ is any matrix norm for $N\times N$ matrices.}

\begin{proof} First note that $ f$ is continuous.
Moreover,  $ f( x^*_\I ,\I)= 0$
and, by Theorem \ref{re:directed}, $ x^*_\I$ is
 globally attractive for initial measures in $U$.

   Also, observe that the Jacobian matrix $\frac{\p f}{\p x} (x, \gamma)$
   is continuous on $U \times \Gamma$, and
   evaluating it at $ ( x^*_\I ,\I)$ we obtain an upper triangular matrix with elements:

\begin{equation*}
\frac{\p f_j}{ \p x_i} (x^*_\I, \I)
=
\left \{ \begin{array}{ll}
\displaystyle ( B_1'(K_1)
- D_1'(K_1)) K_1, \quad&
 j=1, \; i=1,\dots, N,
 \\
B_j (K_1) - D_j (K_1),  &   i=j=2, \dots, N,
\\
0, & \hbox{ otherwise.}
\end{array} \right.
\end{equation*}
Thus, the eigenvalues of the Jacobian matrix $\frac{\p f}{\p x} ( x^*_\I ,I)$  are given by the
 diagonal elements of this matrix, namely,
$$
\Big ( \frac{\partial B_1}{\partial x_1}(K_1)-\frac{\partial D_1}{\partial x_1}(K_1)\Big ) K_1,
\qquad B_j (K_1) - D_j (K_1), \quad j =2, \ldots, N.
$$
From assumptions (A1)-(A6) it is clear that these eigenvalues are real and negative.

As for (H1), we use Theorem \ref{re:pers-robust}
with $Q = \{1, \ldots, N\}$ and $Q^\di = \{1\}$.
Then there exists some $\tilde \delta > 0$ and some $\check \epsilon >0$
such $\liminf_{t\to \infty} x_1(t) \ge \check \epsilon $ for
all solutions of (\ref{discrete}) with $x_1(0) > 0$.

To satisfy (H1) of Theorem \ref{pert1}, set
$\Lambda = \{\gamma \in \Gamma: \|\gamma - \I\|< \tilde \delta \}$
and
$D = \{y \in \R^N_+; y_1 \ge \check \epsilon, \sum_{j=1}^N y_j \le K_1 +1 \}
\subseteq U $.

\bigskip

All assertions follow from Theorem \ref{pert1} except local stability
of the equilibria $x^*_\gamma$ which follows from the fact that the  Jacobian
matrices $\frac{\p f}{\p x} (x^*_\gamma, \gamma)$ continuously depend on $\gamma$
and the spectral bound of a
matrix continuously depends on the matrix. So all eigenvalues of the Jacobian
matrices $\frac{\p f}{\p x} (x^*_\gamma, \gamma)$  are negative if $\|\gamma - \I \|$
is sufficiently small.
\end{proof}

We turn to the case that there are several strategies for which the
carrying capacity is maximal. We assume that the mutation kernel
is directed to one of those strategies. Without loss of generality,
we assume that the carrying capacity is maximal for the first $m$ strategies
and the mutation matrix is directed to the first strategy.
More precisely,
let $N \ge 2$ and $m \in \{2,\ldots, N\}$
and $K_1 = \cdots =K_m = K^\di$ and $K_j < K^\di$ for $j =m+1, \ldots, N$.
In case that $m =N$, the last inequality is omitted. Then $\gamma^\di \in \Gamma$
is directed to the first strategy if
\begin{equation}
\begin{array}{cl}
\gamma_{11}^\di = 1, \qquad \gamma_{1j}^\di= 0 , \quad & j=2, \ldots, N,
\\
\gamma_{i1}^\di > 0, \qquad & i =1 , \ldots, N,
\\
\gamma_{ij}^\di =0, \qquad & i =1, \ldots, m, \quad j = m+1, \ldots, N.
\end{array}
\end{equation}

\begin{theorem}
\label{main-finite2}
Assume that (A1)-(A3) and  (A6) hold and that $\gamma^\di$ is a mutation
matrix that is directed to the first strategy as just explained.

Then there exists some $\delta > 0$ such that, for each matrix $\gamma \in \Gamma$
with $\|\gamma - \gamma^\di \| < \delta$,
 there exists a stable  equilibrium
$x^*_\gamma $ of the ordinary differential equation system \eqref{discrete} with
$x^*_\gamma $ converging to $x^*_{\gamma^\di}=
K_1 e^1$ as $ \| \gamma - \gamma^\di \|  \rightarrow 0$.
Furthermore, if $\|\gamma - \gamma^\di \|  < \delta $, $x^*_\gamma$
attracts all solutions $x$ of (\ref{discrete}) with $x_1(0) > 0$.
\end{theorem}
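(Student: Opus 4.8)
The plan is to apply the Smith--Waltman perturbation theorem (Theorem~\ref{pert1}) at the reference pair $(x_0,\lambda_0) = (K_1 e^1, \gamma^\di)$, following the template of the proof of Theorem~\ref{main-finite} with $\I$ replaced by $\gamma^\di$. First I would check that $K_1 e^1$ is an equilibrium: since $\gamma^\di_{11}=1$, $\gamma^\di_{1j}=0$ for $j\ge 2$, and $B_1(K_1)=D_1(K_1)$ (because $\cR(K_1,q_1)=1$), evaluating \eqref{discrete} at $x=K_1e^1$ gives $f_j(K_1 e^1,\gamma^\di)=B_1(K_1)K_1\gamma^\di_{1j}-D_j(K_1)K_1\delta_{j1}=0$ for every $j$. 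Global attractivity of $K_1 e^1$ in $U=\{x\in\R^N_+;\,x_1>0\}$ under $\gamma^\di$ is not proved by hand but is exactly the content of Theorem~\ref{re:directed}: since $\gamma^\di$ is directed to $q_1\in Q^\di$ and $x_1>0$ forces $u(Q^\di)\ge x_1>0$, every solution starting in $U$ converges to $K^\di\delta_{q_1}=K_1 e^1$.

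The genuinely new content is the spectrum of $J:=\frac{\p f}{\p x}(K_1 e^1,\gamma^\di)$, which, unlike at $\I$, is not upper triangular. Differentiating \eqref{discrete} and using $x_j=0$ for $j\ge 2$ together with $\gamma^\di_{1j}=0$ for $j\ge 2$, the first column of $J$ vanishes except for the entry $J_{11}=(B_1'(K_1)-D_1'(K_1))K_1$, which is negative by (A6). Hence $J_{11}$ is an eigenvalue and the remaining spectrum is that of the block $\tilde J=(J_{jk})_{j,k=2}^N$ with $\tilde J_{jk}=B_k(K_1)\gamma^\di_{kj}-D_j(K_1)\delta_{jk}$. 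Because optimal strategies do not mutate to suboptimal ones, $\gamma^\di_{kj}=0$ whenever $k\le m<j$, so with the splitting $O=\{2,\dots,m\}$, $S=\{m+1,\dots,N\}$ the block $\tilde J$ is block triangular and its spectrum is the union of the spectra of the diagonal blocks $\tilde J_{OO}$ and $\tilde J_{SS}$; both have nonnegative off-diagonal entries $B_k(K_1)\gamma^\di_{kj}$, i.e.\ both are Metzler matrices.

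For a Metzler matrix the spectral abscissa is bounded above by the largest column sum, so it suffices to show all column sums of $\tilde J_{OO}$ and $\tilde J_{SS}$ are negative. For $k\in S$ the $k$-th column sum is $B_k(K_1)\sum_{j\in S}\gamma^\di_{kj}-D_k(K_1)\le B_k(K_1)-D_k(K_1)<0$, the strict inequality holding because $K_k<K^\di$ forces $\cR(K_1,q_k)<1$ (using the uniqueness of the carrying capacity in (A3)). For $k\in O$ one has $B_k(K_1)=D_k(K_1)$ and, since row $k$ of $\gamma^\di$ is supported on $\{1,\dots,m\}$ with $\gamma^\di_{k1}>0$, $\sum_{j\in O}\gamma^\di_{kj}=1-\gamma^\di_{k1}$, so the column sum equals $-B_k(K_1)\gamma^\di_{k1}<0$. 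Thus every eigenvalue of $J$ has negative real part and $K_1 e^1$ is a hyperbolic sink at $\gamma^\di$.

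It remains to verify hypothesis (H1), and this is the one place where the argument really differs from Theorem~\ref{main-finite}: there $Q^\di=\{1\}$, so persistence of $Q^\di$ was persistence of $x_1$, whereas here Theorem~\ref{re:pers-robust} only yields $\liminf_{t\to\infty}(x_1+\cdots+x_m)\ge\tilde\delta$. I would upgrade this to persistence of $x_1$ alone: for $\gamma$ close to $\gamma^\di$ we retain $\gamma_{k1}\ge\gamma_*>0$ and $B_k\ge b_*>0$ on the relevant bounded density range for $k=1,\dots,m$, whence $x_1'\ge b_*\gamma_*(x_1+\cdots+x_m)-D_1(\bar x)x_1$, so eventually $x_1'\ge b_*\gamma_*\tilde\delta-(\sup D_1)x_1$, giving a uniform bound $\liminf_t x_1\ge\check\epsilon>0$. (Alternatively one could invoke Theorem~\ref{re:persistence-unif} directly with the open set $E=\{q_1\}$.) This produces the compact absorbing set $D=\{y\in\R^N_+;\,y_1\ge\check\epsilon,\ \sum_j y_j\le K_1+1\}\subseteq U$ required for (H1). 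Theorem~\ref{pert1} then delivers the equilibria $x^*_\gamma$, their convergence to $K_1 e^1$, and global attraction for $x_1(0)>0$; local stability follows, as in Theorem~\ref{main-finite}, from continuity of the spectral bound of $\frac{\p f}{\p x}(x^*_\gamma,\gamma)$ in $\gamma$. I expect the Jacobian spectral analysis to be the main obstacle, with the upgrade from $Q^\di$-persistence to $x_1$-persistence a close second.
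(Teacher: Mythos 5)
Your proposal is correct and follows essentially the same route as the paper: the Smith--Waltman perturbation theorem at $(K_1e^1,\gamma^\di)$, global attractivity under $\gamma^\di$ imported from Theorem \ref{re:directed}, and a quasi-positive (Metzler) matrix argument for the spectral bound of the lower-right $(N-1)\times(N-1)$ block of the Jacobian. Your column-sum computation on the diagonal blocks $\tilde J_{OO}$ and $\tilde J_{SS}$ is just the transpose of the paper's row-sum computation, and the $O/S$ block-triangular splitting is a harmless detour: the paper applies \cite[Rem.A.48]{Thi03} to the whole block at once, since the full sums $B_i(K^\di)-D_i(K^\di)-B_i(K^\di)\gamma^\di_{i1}$ are already negative for every $i\ge 2$ by exactly the two cases you distinguish. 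The one place you go beyond the paper is the verification of (H1): the paper states that the proof is the same as for Theorem \ref{main-finite}, but there $Q^\di=\{1\}$ and Theorem \ref{re:pers-robust} yields persistence of $x_1$ directly, whereas here it only yields persistence of $x_1+\cdots+x_m$; your differential inequality $x_1'\ge b_*\gamma_*(x_1+\cdots+x_m)-D_1(\bar x)x_1$ upgrading this to $\liminf_{t\to\infty} x_1\ge\check\epsilon$ (or, alternatively, enlarging $U$ to $\{\sum_{i\le m}x_i>0\}$, which Theorem \ref{re:directed} also covers) is a legitimate repair of a step the paper leaves implicit.
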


\begin{proof}
The proof is the same as for Theorem \ref{main-finite} except for
proving that the Jacobian of $f$ evaluated at $(K^\di e^1, \gamma^\di)$ has
a negative spectral bound. Notice that
\begin{equation}
\begin{split}
f_1( x, \gamma^\di) = & \sum_{i=1}^N B_i(\bar x) x_i \gamma^\di_{i1} - D_1(\bar x) x_1,
\\
f_j(x, \gamma^\di) = & \sum_{i=2}^N B_i(\bar x) x_i \gamma^\di_{i1} - D_j(\bar x) x_j,
\qquad j =2, \ldots, N,
\\
\bar x = & \sum_{k=1}^N x_j.
\end{split}
\end{equation}
It is easy to see that $x^\di = x^*_{\gamma^\di} = K^\di e^1$
is an equilibrium of $f(\cdot, \gamma^\di)$.

We do not need to compute all entries of the Jacobian matrix in order
to determine the sign of its spectral bound,
\begin{equation}
\begin{split}
\frac{\p f_1}{\p x_1} (x^\di, \gamma^\di)= & (B_1'(K^\di) - D_1'(K^\di)) K^\di,
\\
\frac{\p f_j}{\p x_1}  (x^\di, \gamma^\di)= & 0, \qquad j =2, \ldots , N,
\\
\frac{\p f_j}{\p x_j}  (x^\di, \gamma^\di)= & B_j(K^\di) \gamma^\di_{jj} - D_j(K^\di),
\qquad j =2, \ldots , N,
\\
\frac{\p f_j}{\p x_i} (x^\di, \gamma^\di)= & B_i(K^\di) \gamma^\di_{ij} ,
\qquad i, j =2 , \ldots, N,\; i \ne j.
\end{split}
\end{equation}
From these equations, it is apparent that the spectral bound of
$\frac{\p f}{\p x} (x^\di,\gamma^\di)$
is the larger of $ (B_1'(K^\di) - D_1'(K^\di)) K^\di < 0$
and the spectral bound of the matrix
$\big (\frac{\p f_j}{\p x_i} (x^\di,\gamma^\di) \big )_{2 \le i,j\le N} $.
The row sums of this latter matrix are
\[
\sum_{j=2}^N \frac{\p f_j}{\p x_i} (x^\di,\gamma^\di)
=
\sum_{j=2}^N B_i(K^\di) \gamma_{ij}^\di -  D_i(K^\di)
=
B_i(K^\di) - D_i(K^\di)  - B_i(K^\di) \gamma^\di_{i1}, \quad i =2, \ldots, N.
\]
The last expression is negative: for $i=2, \ldots, m$ because $B_i(K^\di) = D_i(K^\di) >0$
and $\gamma^\di_{i1} > 0$; for $i =m+1, \ldots, N$ because $B_i(K^\di) < D_i(K^\di)$.
By \cite[Rem.A.48]{Thi03}, the spectral bound of
$(\frac{\p f_j}{\p x_i} (x^\di,\gamma^\di))_{2 \le i,j\le N} $ is negative and so is
the spectral bound of $\frac{\p f}{\p x} (x^\di, \gamma^\di)$.
\end{proof}


\section{Concluding Remarks}
\label{sec:conclud}

\par We studied the long-time behavior
of measure-valued solutions of a differential equation {that can be viewed
as the model for an evolutionary game or as a selection-mutation population
model}. We first assume that there is a unique fittest
{strategy or trait}. This unique fittest trait is characterized by maximizing the carrying capacity $K(q)$. We {provided conditions under
   which},  in the pure replication case with a unique fittest {trait}, the model converges to a Dirac measure {concentrated} at the fittest trait {provided that
   the fittest trait is contained in the support of the initial measure.}

{One sufficient condition is that this trait (strategy) does not only maximize
the carrying capacity but also the reproduction numbers at all relevant population
densities. Another sufficient condition is that the fittest strategy is isolated
in the strategy space. If there are several fittest traits, the solutions converges
to a set of measures the support of which is contained in the set of fittest traits.}

{If mutations are allowed, we only could get results on the long-time behavior
of solutions if we assumed that the
set of fittest traits was topologically separated from the less fit traits.
Convergence to the Dirac measure concentrated at the fittest trait holds
if the mutations are directed in the sense that there are no mutation losses
from the set of fittest traits  and that among the fittest traits
there is one particular trait  such that mutations within this set are directed towards this particular  trait. }

We also showed that,    {for a discrete strategy space and mutations
that are small or directed}, there is a locally asymptotically stable equilibrium that attracts all solutions with initial conditions that are positive at the fittest strategy.

 The abstract theory presented in this paper finds practical application in epidemic models.
 Epidemic models which consider the dynamics of multi-strain pathogens have been studied in
 the literature
 (e.g., {\cite{AA1,BT, SmTh13}}). These models
have been formulated as systems of ordinary differential equations where infected
individuals are distributed over a set of $n$ classes each carrying a particular strain
of a finite and discrete {trait} (strategy) space. However, often a continuous (strategy)
space is needed. For example, think
of a particular disease that has transmission rate $\beta$ with possible values in the
interval $[\underline \beta,\overline \beta]$. Then infected individuals are distributed
over a {trait} space with transmission taking values in this interval.
The theory presented here will have potential applications in treating
such distributed rate epidemic models with possibly more than one fittest strain; however, at the present state of the theory,
selection of the fittest strains could only be concluded if
mutations are excluded.
\\

 \noindent {\bf Acknowledgements:}  We  thank Paul Salceanu  and Ping Ng
 for helpful discussions.
 Azmy Ackleh and John Cleveland were partially supported by the
 National Science Foundation under grant \# DMS-0718465. Azmy  Ackleh was also supported
 by the National Science Foundation under grant \# DMS-1312963.

\end{document}